\newcommand{\der}{\delta}
\newcommand{\hz}{\hat z}
\newcommand{\ka}{\kappa}
\newcommand{\lot}{[\ell_1,\ell_2]}
\newcommand{\norm}[1]{\lVert #1\rVert}
\newcommand{\yti}{\tilde{y}}
\newcommand{\zti}{\tilde{z}}
\newcommand{\cyti}{\tilde{\cy}}
\newcommand{\yun}{y^{(1)}}
\newcommand{\zun}{z^{(1)}}
\newcommand{\zde}{z^{(2)}}
\newcommand{\czun}{\cz^{(1)}}
\newcommand{\czde}{\cz^{(2)}}
\newcommand{\hzun}{\hat{z}^{(1)}}
\newcommand{\hzde}{\hat{z}^{(2)}}
\DeclareMathOperator{\id}{\text{Id}}
\newcommand{\cb}{{\mathcal B}}
\newcommand{\cac}{{\mathcal C}}
\newcommand{\cf}{{\mathcal F}}
\newcommand{\cj}{{\mathcal J}}
\newcommand{\cl}{{\mathcal L}}
\newcommand{\cn}{{\mathcal N}}
\newcommand{\cq}{{\mathcal Q}}
\newcommand{\cy}{{\mathcal Y}}
\newcommand{\cZ}{{\mathcal Z}}
\newcommand{\cz}{{\mathcal Z}}
\newcommand{\al}{\alpha}
\newcommand{\be}{\beta}
\newcommand{\ga}{\gamma}
\newcommand{\ep}{\varepsilon}
\newcommand{\la}{\lambda}
\newcommand{\si}{\sigma}
\newcommand{\vp}{\varphi}
\newcommand{\ze}{\zeta}
\newcommand{\laa}{\Lambda}
\newcommand{\N}{{\mathbb N}}
\newcommand{\R}{{\mathbb R}}
\newcommand{\lcl}{\left\{}
\newcommand{\rcl}{\right\}}
\newcommand{\lp}{\left(}
\newcommand{\rp}{\right)}
\newcommand{\lc}{\left[}
\newcommand{\rc}{\right]}
\newcommand{\lln}{\left|}
\newcommand{\rrn}{\right|}
\newcommand{\bean}{\begin{eqnarray*}}
\newcommand{\eean}{\end{eqnarray*}}
\newcommand{\ben}{\begin{enumerate}}
\newcommand{\een}{\end{enumerate}}
\newcommand{\beq}{\begin{equation}}
\newcommand{\eeq}{\end{equation}}
\newtheorem{theorem}{Theorem}[section]
\newtheorem{corollary}[theorem]{Corollary}
\newtheorem{definition}[theorem]{Definition}
\newtheorem{hypothesis}{Hypothesis}
\newtheorem{lemma}[theorem]{Lemma}
\newtheorem{proposition}[theorem]{Proposition}
\theoremstyle{remark}
\newtheorem{remark}[theorem]{Remark}
\begin{document}

\title[Rough Volterra equations]{Rough Volterra equations 1: the algebraic integration setting}
\author{Aurélien Deya and Samy Tindel}

\address{
{\it Aurélien Deya and Samy Tindel:}
{\rm Institut {\'E}lie Cartan Nancy, Nancy-Universit\'e, B.P. 239,
54506 Vand{\oe}uvre-l{\`e}s-Nancy Cedex, France}.
{\it Email: }{\tt deya@iecn.u-nancy.fr}, {\tt tindel@iecn.u-nancy.fr}
}

\begin{abstract}
We define and solve Volterra equations driven by an irregular signal, by means of a variant of the rough path theory called algebraic integration. In the Young case, that is for a driving signal with Hölder exponent $\ga>1/2$, we obtain a global solution, and are able to handle the case of a singular Volterra coefficient. In case of a driving signal with Hölder exponent $1/3<\ga\le 1/2$, we get a local existence and uniqueness theorem. The results are easily applied to the fractional Brownian motion with Hurst coefficient $H>1/3$.
\end{abstract}

\date{\today}

\keywords{Rough paths theory; Stochastic Volterra equations; Fractional Brownian motion.}

\subjclass[2000]{60G15, 60H05, 60H20}

\maketitle


\section{Introduction}

This article is the first of a series of two papers dealing with Volterra equations driven by rough paths. For an arbitrary positive constant $T$, this kind of equation can be written, in its general form, as:
\begin{equation}\label{eq:volterra}
y_t=a+\int_0^t \si(t,u,y_u)\, dx_u,
\quad\mbox{ for }\quad
s\in[0,T],
\end{equation}
where $x$ is a $n$-dimensional Hölder continuous path with Hölder exponent $\ga>0$, $a\in\R^d$ stands for  an initial condition, and $\si:\R_+\times\R_+\times\R^d\to\R^{d,n}$ is a smooth enough function.

\smallskip

Motivated by the previous works on Volterra equations driven by a Brownian motion or a semi-martingale \cite{BM1,BM2,Lew,Pr}, often in an anticipative context \cite{AN,CLP,CD,OZ,NR,PP}, we have taken up the program of defining and solving equation (\ref{eq:volterra}) in a pathwise way, allowing for instance a straightforward application to a fractional Brownian motion with Hurst parameter $H>1/3$. This will be achieved thanks to a variation of the rough path theory due to Gubinelli \cite{Gu}, whose main features are recalled below at Section \ref{sec:alg-integration} (we refer to \cite{FV,Le,LQ-bk} for further classical references on rough paths theory). To the best of our knowledge, this is the first occurrence of a paper dealing with Volterra systems driven by a fractional Brownian motion with $H<1/2$.

\smallskip

More specifically, the current article focuses on the 3 following cases:

\smallskip

\noindent
{\it (i) The Young case:} When 
 $x$ is a $\ga$-Hölder continuous path with $\ga>1/2$ (in particular for a $n$-dimensional fBm with Hurst parameter $H\in (1/2,1)$), and assuming that $\si: [0,T]^2 \times \R^d \to \R^{d,n}$ is regular enough (with respect to its three variables), we shall prove that equation (\ref{eq:volterra}) can be interpreted and solved in the Young sense (Section \ref{sec:young-case}).

\smallskip

\noindent
{\it (ii) The Young singular case:}
Under the same conditions as in the previous case for $x$, we are able to handle the case of a coefficient $\si$ admitting a singularity with respect to its first two variables $t,u$. Namely, if $\si$ can be expressed as $\si(t,u,z)=(t-u)^{-\al} \psi(z)$, for some $\al >0$ and $\psi: \R^d \to \R^{d,n}$ regular enough, then under some conditions on $\al,\ga,\ka$ (roughly speaking, we ask that $\ga-\al>1/2$ and $1/2<\ka<\ga$), it is still possible to interpret $\int_0^t \si(t,u,y_u) \, dx_u$ as a Young integral when $y$ belongs to a space of $\ka$-Hölder functions, denoted below by $\cac_1^\ka([0,T],\R^d)$. This extension of the Young integral requires however a careful analysis, which will be detailed at  Section \ref{sec:singular-case}. We can then solve equation~(\ref{eq:volterra}) in the space $\cac_1^\ka([0,T],\R^d)$.

\smallskip

\noindent
{\it (iii) The rough case:}
When $x$ is a $\ga$-Hölder signal with $\ga \in (1/3,1/2)$ (this applies obviously to a $n$-dimensional fBm with Hurst parameter $H \in (1/3,1/2)$), the integral appearing in equation~(\ref{eq:volterra}) has then to be interpreted in some rough path sense. As mentioned before, we shall resort in this case to the formalism introduced in \cite{Gu}, which allows us to prove the existence and uniqueness of a local solution, defined on a small interval $[0,T_0]$ for some $T_0 \in (0,T]$ (Section \ref{sec:rough-case}). We will then point out the technical difficulties one must cope with when trying to extend this local solution.

\smallskip

Here is a brief sketch of the strategy we have followed in order to obtain our results: the algebraic integration formalism relies heavily on the notion of increments, which are simply given, in case of a function $y$ of one parameter $t\in[0,T]$, by $(\der y)_{st}=y_t-y_s$. At a heuristic level, the main difference between classical differential equations driven by rough signals and our Volterra setting lies in the dependance of the increment $(\der y)_{st}$ of the possible solution on the whole past of the trajectory. Indeed, if $y$ is a solution to equation (\ref{eq:volterra}), then one has
\begin{equation}\label{decomposition-generale}
(\der y)_{st}=\int_s^t \si(t,u,y_u) \, dx_u+\int_0^s \lc \si(t,u,y_u)-\si(s,u,y_u)\rc \, dx_u.
\end{equation}
As one might expect, the first integral in (\ref{decomposition-generale}) can be dealt with just as the classical diffusion case treated in \cite{Gu}. In other words, under suitable regularity conditions on $\si$, the variable $t$ appearing in the integrand does not play a prominent role. The second term in the right hand side of (\ref{decomposition-generale}) is the one which is typical of the Volterra setting, and involves the whole past of $x$. It is still possible to retrieve some $\lln t-s \rrn$-increments from this term thanks to the regularity of $\si$ with respect to its first variable, in order to solve our equation by a fixed point argument. However, as we shall see at Section \ref{sec:5.3}, the term $\int_0^s \lc \si(t,u,y_u)-\si(s,u,y_u)\rc \, dx_u$ will eventually induce some severe problems in the classical arguments allowing to get a global solution for our differential system in the rough case. This explains why we have decided to change radically the setting presented here in the companion paper \cite{DT}. In this latter reference, by means of what we call generalized convolutional increments, we show how to get a global solution to equation (\ref{eq:volterra}) in case of a rough driving noise $x$, for a wide class of coefficients $\si$. It was however important for us to include also a direct treatment of Volterra systems by existing rough paths methods, mainly because (i) It allows to consider a more general driving coefficient $\si$. (ii) The method presented here works perfectly well for the Young setting, and can be further extended in order to cover the case of a singular coefficient $\si$.

\smallskip

Here is how our paper is structured: we recall at Section \ref{sec:alg-integration} the notions of algebraic integration which will be needed later on. Section \ref{sec:young-case} is devoted to the study of equation~(\ref{eq:volterra}) driven by a $\ga$-Hölder continuous process with $\ga>1/2$, when the coefficient $\si$ is regular. Section \ref{sec:singular-case} deals with the same kind of equation, with a singular coefficient $\si$. Section \ref{sec:rough-case} treats the case of a rough driving signal $x$, and finally the proof of some technical lemmas are postponed to the Appendix.

\smallskip

Let us finish this introduction by fixing some notations which are used throughout the paper: we call $Df$ the gradient of a function $f$, defined on $\R^n$, and when we want to stress the fact that we are differentiating $f$ with respect to the $j\textsuperscript{th}$ variable, we denote this by $D_jf$.
As far as the regularity of $\si$ is concerned, the following spaces come into play. If $E,F$ are Banach spaces and $U$ an open set of $E$, denote $\cac^{n,\textbf{\textit{b}}}(U;F)$ the set of $n$-times differentiable mappings from $U$ to $F$ with bounded derivatives. For each $\ka \in (0,1)$, let us also introduce the subset
$$\cac^{n,\textbf{\textit{b}},\ka}(U;F)=\lcl \si \in \cac^{n,\textbf{\textit{b}}}(U;F): \ \sup_{x,y \in U} \frac{\norm{D^{(n)}\si(x)-D^{(n)}\si(y)}}{\norm{x-y}^\ka} < \infty \rcl.$$


\section{Algebraic integration}
\label{sec:alg-integration}

The current section is devoted to recall the main concepts of algebraic integration, which will be essential in order to define suitable notions of generalized integrals in our setting. Namely, we shall recall the definition of the spaces of increments $\cac_n^{\ka}$, of the operator $\delta$, and its inverse called $\Lambda$ (or sewing map according to the terminology of \cite{FP}). We will also recall some elementary but useful algebraic relations on the spaces of increments.

\subsection{Increments}\label{sec:incr}

As mentioned in the introduction, the extended integral we deal
with is based on the notion of increment, together with an
elementary operator $\der$ acting on them. The notion of increment can be introduced in the following way:  for two arbitrary real numbers
$\ell_2>\ell_1\ge 0$, a vector space $V$, and an integer $k\ge 1$, we denote by
$\cac_k(V)$ the set of continuous functions $g : [\ell_1,\ell_2]^{k} \to V$ such
that $g_{t_1 \cdots t_{k}} = 0$
whenever $t_i = t_{i+1}$ for some $i\le k-1$.
Such a function will be called a
\emph{$(k-1)$-increment}, and we will
set $\cac_*(V)=\cup_{k\ge 1}\cac_k(V)$. The operator $\der$
alluded to above can be seen as an operator acting on
$k$-increments, 
and is defined as follows on $\cac_k(V)$:
\begin{equation}
  \label{eq:coboundary}
\delta : \cac_k(V) \to \cac_{k+1}(V) \qquad
(\delta g)_{t_1 \cdots t_{k+1}} = \sum_{i=1}^{k+1} (-1)^i g_{t_1
  \cdots \hat t_i \cdots t_{k+1}} ,
\end{equation}
where $\hat t_i$ means that this particular argument is omitted.
Then a fundamental property of $\der$, which is easily verified,
is that
$\delta \delta = 0$, where $\delta \delta$ is considered as an operator
from $\cac_k(V)$ to $\cac_{k+2}(V)$.
 We will denote $\cZ\cac_k(V) = \cac_k(V) \cap \text{Ker}\delta$
and $\cb \cac_k(V) =\cac_k(V) \cap \text{Im}\delta$.

\smallskip

Some simple examples of actions of $\der$,
which will be the ones we will really use throughout the paper,
 are obtained by letting
$g\in\cac_1$ and $h\in\cac_2$. Then, for any $t,u,s\in\lot$, we have
\begin{equation}
\label{eq:simple_application}
  (\der g)_{st} = g_t - g_s,
\quad\mbox{ and }\quad
(\der h)_{sut} = h_{st}-h_{su}-h_{ut}.
\end{equation}
Furthermore, it is readily checked that
the complex $(\cac_*,\delta)$ is \emph{acyclic}, i.e.
$\cZ \cac_{k}(V) = \cb \cac_{k}(V)$ for any $k\ge 1$. In particular, the following basic property, which we
label  for further use, holds true:
\begin{lemma}\label{exd}
Let $k\ge 1$ and $h\in \cz\cac_{k+1}(V)$. Then there exists a (non unique)
$f\in\cac_{k}(V)$ such that $h=\der f$.
\end{lemma}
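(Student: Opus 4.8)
The plan is to prove this by exhibiting an explicit primitive of $h$, which is precisely the computation underlying the acyclicity of $(\cac_*,\der)$ quoted just above. The guiding idea is to ``freeze'' the last argument of $h$ at the left endpoint $\ell_1$.

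Concretely, for $h\in\cz\cac_{k+1}(V)$ I would set
\[
f_{t_1\cdots t_k}:=(-1)^{k+1}\,h_{t_1\cdots t_k\,\ell_1},
\qquad t_1,\dots,t_k\in\lot .
\]
The first step is to check that $f\in\cac_k(V)$: continuity of $f$ is inherited from that of $h$, and if $t_i=t_{i+1}$ for some $i\le k-1$, then the $(k+1)$-tuple $(t_1,\dots,t_k,\ell_1)$ still carries two equal consecutive entries in positions $i,i+1$ (with $i+1\le k$), so $h_{t_1\cdots t_k\,\ell_1}=0$ by the defining property of $\cac_{k+1}(V)$; hence $f$ vanishes on the required diagonals.

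The second step is to compute $\der f$ from \eqref{eq:coboundary}:
\[
(\der f)_{t_1\cdots t_{k+1}}=\sum_{i=1}^{k+1}(-1)^i f_{t_1\cdots\hat t_i\cdots t_{k+1}}
=(-1)^{k+1}\sum_{i=1}^{k+1}(-1)^i h_{t_1\cdots\hat t_i\cdots t_{k+1}\,\ell_1}.
\]
On the other hand, applying $\der h=0$ to the $(k+2)$-tuple $(t_1,\dots,t_{k+1},\ell_1)$ and isolating the term in which the final entry $\ell_1$ is omitted gives
\[
0=\sum_{i=1}^{k+1}(-1)^i h_{t_1\cdots\hat t_i\cdots t_{k+1}\,\ell_1}+(-1)^{k+2}h_{t_1\cdots t_{k+1}},
\]
whence $\sum_{i=1}^{k+1}(-1)^i h_{t_1\cdots\hat t_i\cdots t_{k+1}\,\ell_1}=(-1)^{k+1}h_{t_1\cdots t_{k+1}}$. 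Substituting this into the previous display yields $(\der f)_{t_1\cdots t_{k+1}}=h_{t_1\cdots t_{k+1}}$, i.e. $h=\der f$. Non-uniqueness is then immediate from the construction: freezing at any other point of $\lot$ instead of $\ell_1$ gives another valid $f$, and in general two primitives differ by an element of $\cz\cac_k(V)$ (which is nonzero for $k\ge1$, e.g. the constant increments when $k=1$).

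The argument has essentially no analytic content; the only points demanding a little care — and hence the only ``obstacle'' — are the sign bookkeeping in the alternating sum defining $\der$, and the verification that the candidate $f$ genuinely lies in $\cac_k(V)$ (i.e. the vanishing on consecutive diagonals) rather than merely in the space of continuous functions on $\lot^k$.
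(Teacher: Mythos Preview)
Your argument is correct: the map $h\mapsto f$ with $f_{t_1\cdots t_k}=(-1)^{k+1}h_{t_1\cdots t_k\,\ell_1}$ is precisely the standard contracting homotopy for the complex $(\cac_*,\der)$, and your sign computation and diagonal-vanishing check are accurate. The paper itself does not spell out a proof of this lemma --- it merely states that acyclicity is ``readily checked'' and records the lemma as an immediate consequence --- so you are supplying exactly the verification the paper omits, by the expected method.
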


\noindent
Observe that Lemma \ref{exd} implies that all the elements
$h \in\cac_2(V)$ such that $\der h= 0$ can be written as $h = \der f$
for some (non unique) $f \in \cac_1(V)$. Thus we get a heuristic
interpretation of $\der |_{\cac_2(V)}$:  it measures how much a
given 1-increment  is far from being an {\it exact} increment of a
function (i.e. a finite difference).

\smallskip

Notice that our future discussions will mainly rely on
$k$-increments with $k \le 2$, for which we will use some
analytical assumptions. Namely,
we measure the size of these increments by H\"older norms
defined in the following way: for $f \in \cac_2(V)$ let
$$
\norm{f}_{\mu} \equiv
\sup_{s,t\in\lot}\frac{|f_{st}|}{|t-s|^\mu},
\quad\mbox{and}\quad
\cac_1^\mu(V)=\lcl f \in \cac_2(V);\, \norm{f}_{\mu}<\infty  \rcl.
$$
In the same way, for $h \in \cac_3(V)$, set
\begin{eqnarray}
  \label{eq:normOCC2}
  \norm{h}_{\gamma,\rho} &=& \sup_{s,u,t\in\lot}
\frac{|h_{sut}|}{|u-s|^\gamma |t-u|^\rho}\\
\norm{h}_\mu &\equiv &
\inf\left \{\sum_i \norm{h_i}_{\rho_i,\mu-\rho_i} ;\, h  =\sum_i h_i,\, 0 < \rho_i < \mu \right\} ,\nonumber
\end{eqnarray}
where the last infimum is taken over all sequences $\{h_i \in \cac_3(V) \}$ such that $h
= \sum_i h_i$ and for all choices of the numbers $\rho_i \in (0,z)$.
Then  $\norm{\cdot}_\mu$ is easily seen to be a norm on $\cac_3(V)$, and we set
$$
\cac_3^\mu(V):=\lcl h\in\cac_3(V);\, \norm{h}_\mu<\infty \rcl.
$$
Eventually,
let $\cac_3^{1+}(V) = \cup_{\mu > 1} \cac_3^\mu(V)$,
and remark that the same kind of norms can be considered on the
spaces $\cZ \cac_3(V)$, leading to the definition of some spaces
$\cZ \cac_3^\mu(V)$ and $\cZ \cac_3^{1+}(V)$. In order to avoid ambiguities, we shall denote by $\cn[f;\, \cac_j^\kappa]$ the $\kappa$-Hölder norm on the space $\cac_j$, for $j=1,2,3$. For $\zeta\in\cac_j(V)$, we also set $\mathcal{N}[\zeta;\mathcal{C}_{j}^{0}(V)]=\sup_{s\in[\ell_1; \ell_2]^j}\lVert \zeta_s\rVert_{V}$.

\vspace{0.2cm}

Recall that Lemma \ref{exd} states that for any $h\in\cz\cac_3$, there exists a $f\in\cac_2$ such that $\der f=h$. Importantly enough for the construction of our generalized integrals, this increment $f$ is unique under some additional regularity conditions expressed in terms of the Hölder spaces we have just introduced:
\begin{theorem}[The sewing map] \label{prop:Lambda}
Let $\mu >1$. For any $h\in \cz \cac_3^\mu([0,1]; V)$, there exists a unique $\Lambda h \in \cac_2^\mu([0,1];V)$ such that $\der( \Lambda h )=h$. Furthermore,
\begin{eqnarray} \label{contraction}
\norm{ \Lambda h}_\mu \leq c_\mu  \, \cn[h;\, \cac_3^{\mu}(V)],
\end{eqnarray}
with $c_\mu =2+2^\mu \sum_{k=1}^\infty k^{-\mu}$. This gives rise to a linear continuous map $\laa:  \cz \cac_3^\mu([0,1]; V) \rightarrow \cac_2^\mu([0,1];V)$ such that $\der \laa =\id_{ \cz \cac_3^\mu([0,1]; V)}$.
\end{theorem}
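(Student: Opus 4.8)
The plan is to prove uniqueness first, and then construct $\Lambda h$ explicitly as the limit of a sequence of discrete Riemann-type sums, controlled via the $\cac_3^\mu$-norm with $\mu > 1$. For uniqueness, suppose $f, \tilde f \in \cac_2^\mu([0,1];V)$ both satisfy $\der f = \der h = \der \tilde f$. Then $g := f - \tilde f \in \cac_2^\mu$ satisfies $\der g = 0$, i.e. $g \in \cz\cac_2$. By Lemma \ref{exd} (applied with $k = 1$) there exists $\phi \in \cac_1$ with $g = \der \phi$; but $g \in \cac_1^\mu$ with $\mu > 1$ forces $\phi$ to be differentiable with $D\phi \equiv 0$ on $[0,1]$, hence $\phi$ is constant and $g = \der \phi = 0$. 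More directly, one can argue that $\der g = 0$ together with $|g_{st}| \leq c|t-s|^\mu$ implies, by iterating the relation $g_{st} = g_{su} + g_{ut}$ along a dyadic partition of $[s,t]$, that $|g_{st}| \leq 2^n \cdot c \, (|t-s|/2^n)^\mu = c |t-s|^\mu 2^{-n(\mu-1)} \to 0$, so $g \equiv 0$. This is the easy half.

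For existence, the main step is the construction. Given $h \in \cz\cac_3^\mu$, fix $s < t$ and for each $n \geq 0$ let $\pi^n = \{s = t_0^n < t_1^n < \cdots < t_{2^n}^n = t\}$ be the uniform dyadic partition of $[s,t]$. Define the ``Riemann sum'' $J_n(s,t) = \sum_{i} \big(\text{something built from } h\big)$; more precisely, one sets $J_0(s,t) = 0$ and shows that passing from $\pi^n$ to $\pi^{n+1}$ (inserting midpoints) changes the sum by a term controlled by $\cn[h;\cac_3^\mu] \cdot |t-s|^\mu 2^{-n(\mu-1)}$, using that $h \in \text{Ker}\,\der$ to rewrite the correction at each inserted point as $h_{t_{i}\, m_i\, t_{i+1}}$ for the midpoint $m_i$, and bounding $\sum_i |t_{i+1}^n - t_i^n|^\mu = 2^n (|t-s|/2^n)^\mu$. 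Since $\mu > 1$, the series $\sum_n 2^{-n(\mu-1)}$ converges, so $J_n(s,t)$ is Cauchy and we may define $(\Lambda h)_{st} := \lim_n J_n(s,t)$. Summing the geometric-type bound yields precisely $\norm{\Lambda h}_\mu \leq c_\mu \cn[h;\cac_3^\mu(V)]$ with $c_\mu = 2 + 2^\mu \sum_{k \geq 1} k^{-\mu}$, after a careful accounting of the partition refinement estimates (the $2^\mu \sum k^{-\mu}$ factor comes from summing the refinement errors, the additive $2$ from the initialization).

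It then remains to check that the limiting object is genuinely a $2$-increment (continuity in $(s,t)$ and vanishing on the diagonal, both inherited from the uniform Hölder bound) and, crucially, that $\der(\Lambda h) = h$. The latter follows by a consistency argument: one verifies that the discrete sums $J_n$ satisfy an approximate Chasles relation $J_n(s,t) \approx J_n(s,u) + J_n(u,t) + h_{sut}$ with error tending to $0$, so passing to the limit gives $(\Lambda h)_{st} - (\Lambda h)_{su} - (\Lambda h)_{ut} = -h_{sut}$; comparing signs with the convention $(\der f)_{sut} = f_{st} - f_{su} - f_{ut}$ in \eqref{eq:simple_application} shows $\der(\Lambda h) = h$ (up to fixing the sign in the definition of $J_n$). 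Linearity of $h \mapsto \Lambda h$ is immediate from linearity of the sums, and continuity is exactly \eqref{contraction}. The main obstacle is the bookkeeping in the refinement step: one must handle non-dyadic intermediate points $u$ when proving $\der \Lambda h = h$, which is typically done by first establishing everything on dyadic rationals and then extending by continuity, and one must track the constants carefully enough to obtain the stated $c_\mu$ rather than merely a finite constant. I would relegate the precise partition estimates to a lemma and refer to \cite{Gu, FP} for the standard details, since this is by now a classical argument.
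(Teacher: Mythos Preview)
The paper does not actually prove this theorem: its entire proof reads ``The original proof of this result can be found in \cite{Gu}. We refer to \cite{DT,GT} for two simplified versions.'' Your sketch is precisely the standard dyadic-refinement argument from those references, so you are giving (more of) the same approach the paper defers to.

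One small correction worth noting: the usual construction does not start from $J_0(s,t)=0$. Rather, one first invokes Lemma~\ref{exd} to pick some $B\in\cac_2$ with $\der B=h$, and sets $J_n(s,t)=\sum_i B_{t_i^n t_{i+1}^n}$ along the dyadic partition; then $J_{n}-J_{n+1}=\sum_i (\der B)_{t_{2i}^{n+1}\,t_{2i+1}^{n+1}\,t_{2i+2}^{n+1}}=\sum_i h_{\cdots}$ yields the refinement estimate, the limit $M_\infty=\lim_n J_n$ is additive (hence $\der M_\infty=0$), and $\Lambda h:=B-M_\infty$ automatically satisfies $\der(\Lambda h)=h$ with the stated $\mu$-H\"older bound. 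Starting literally from $J_0=0$ leaves you with nothing to sum and makes both the identity $\der(\Lambda h)=h$ and the exact constant $c_\mu$ harder to recover; your final paragraph effectively concedes this by deferring the bookkeeping to \cite{Gu,FP}, which is exactly what the paper does.
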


\begin{proof}
The original proof of this result can be found in \cite{Gu}. We refer to \cite{DT,GT} for two simplified versions.

\end{proof}

At this point the connection of the structure we introduced with
the problem of integration of irregular functions can be still quite
obscure to the non-initiated reader. However something interesting is
already going on and the previous corollary has a very nice
consequence which is the subject of the following property.

\begin{corollary}[Integration of small increments]
\label{cor:integration}
For any 1-increment $g\in\cac_2 (V)$, such that $\der g\in\cac_3^{1+}$,
set
$
\delta f = (\id-\Lambda \delta) g
$.
Then
$$
(\delta f)_{st} = \lim_{|\Pi_{st}| \to 0} \sum_{i=0}^n g_{t_{i} t_{i+1}},
$$
where the limit is over any partition $\Pi_{st} = \{t_0=s,\dots,
t_n=t\}$ of $[s,t]$ whose mesh tends to zero. The
1-increment $\delta f$ is the indefinite integral of the 1-increment $g$.
\end{corollary}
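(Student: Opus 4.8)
The plan is to first verify that the expression $(\id-\Lambda\delta)g$ is a well-defined element of $\cac_2(V)$ that is moreover an \emph{exact} increment, and then to derive the Riemann-sum convergence by a telescoping argument fed by the H\"older bound \eqref{contraction} satisfied by $\Lambda$.

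First I would make sense of $\delta f := (\id-\Lambda\delta)g$. Since $g\in\cac_2(V)$ we have $\delta g\in\cac_3(V)$, and the hypothesis $\delta g\in\cac_3^{1+}$ provides $\mu>1$ with $\delta g\in\cac_3^\mu(V)$; moreover $\delta\delta g=0$, so in fact $\delta g\in\cz\cac_3^\mu([0,1];V)$ and Theorem \ref{prop:Lambda} applies, furnishing $\Lambda\delta g\in\cac_2^\mu([0,1];V)$ with $\delta(\Lambda\delta g)=\delta g$. Hence $\delta f=g-\Lambda\delta g$ genuinely belongs to $\cac_2(V)$, and applying $\delta$ gives $\delta(\delta f)=\delta g-\delta(\Lambda\delta g)=0$. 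By Lemma \ref{exd} with $k=1$ there is $f\in\cac_1(V)$ with $\delta f=(\id-\Lambda\delta)g$, which legitimizes the notation; note also that $\delta(\delta f)=0$ reads $(\delta f)_{su}+(\delta f)_{ut}=(\delta f)_{st}$, the additivity over adjacent intervals one expects of an indefinite integral.

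Next I would fix $s<t$ in $[0,1]$ and a partition $\Pi_{st}=\{s=t_0<t_1<\cdots<t_n=t\}$, and use the decomposition $g=\delta f+\Lambda\delta g$ to write
$$
\sum_{i=0}^{n-1} g_{t_i t_{i+1}}
=\sum_{i=0}^{n-1} (\delta f)_{t_i t_{i+1}}
+\sum_{i=0}^{n-1} (\Lambda\delta g)_{t_i t_{i+1}}.
$$
Because $\delta f$ is the true increment of $f$, the first sum telescopes to $f_t-f_s=(\delta f)_{st}$, independently of the partition. For the second, set $\phi:=\Lambda\delta g$; from $|\phi_{t_i t_{i+1}}|\le\norm{\phi}_\mu\,|t_{i+1}-t_i|^\mu$ together with $\sum_i|t_{i+1}-t_i|^\mu\le|\Pi_{st}|^{\mu-1}\sum_i|t_{i+1}-t_i|=|\Pi_{st}|^{\mu-1}(t-s)$ one gets
$$
\lln \sum_{i=0}^{n-1} (\Lambda\delta g)_{t_i t_{i+1}} \rrn
\le \norm{\phi}_\mu\,|\Pi_{st}|^{\mu-1}(t-s)
\le c_\mu\,\cn[\delta g;\cac_3^\mu(V)]\,|\Pi_{st}|^{\mu-1}(t-s),
$$
using \eqref{contraction} in the last step. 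Since $\mu>1$, the right-hand side vanishes as $|\Pi_{st}|\to 0$, and combining with the telescoping yields $\sum_i g_{t_i t_{i+1}}\to(\delta f)_{st}$, which is the claim.

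The only step that requires genuine care is the first one: one must observe that $(\id-\Lambda\delta)g$ is annihilated by $\delta$ — hence is an actual finite difference — since that is precisely what makes the telescoping in the main computation legitimate. Once this is granted, the convergence is the same soft mechanism underlying Theorem \ref{prop:Lambda}, namely that a $2$-increment of H\"older exponent strictly larger than $1$ has negligible Riemann sums, so there is no serious obstacle beyond bookkeeping.
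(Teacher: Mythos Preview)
Your proof is correct and follows essentially the same approach as the paper: decompose $g=\delta f+\Lambda\delta g$, telescope the first sum, and bound the second by $\norm{\Lambda\delta g}_\mu\,|\Pi_{st}|^{\mu-1}(t-s)$. The only difference is that you spell out explicitly why $(\id-\Lambda\delta)g$ is an exact increment, which the paper takes for granted from the notation.
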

\begin{proof}
For any partition $\Pi_{t}=\{s=t_0 < t_1 <...<t_n=t\}$ of $[s,t]$, write
$$(\der f)_{st}=\sum_{i=0}^n (\der f)_{t_{i}t_{i+1}} =\sum_{i=0}^n g_{t_{i}t_{i+1}}-\sum_{i=0}^n \laa_{t_{i}t_{i+1}}(\der g).$$
Observe now that for some $\mu >1$ such that $\der g \in \cac_3^\mu$,
$$\norm{\sum_{i=0}^n \laa_{t_{i}t_{i+1}}(\der g)}_V \leq \sum_{i=0}^n \norm{\laa_{t_{i}t_{i+1}}(\der g)}_V \leq \norm{\laa(\der g)}_\mu \, \lln \Pi_{st}\rrn^{\mu-1} \, \lln t-s\rrn,$$
and as a consequence, $\lim_{\lln \Pi_{st}\rrn \rightarrow 0} \sum_{i=0}^n \laa_{t_{i}t_{i+1}}(\der g) =0$.
\end{proof}

\subsection{Computations in $\cac_*$}\label{cpss}

We gather in this section some elementary but useful algebraic rules for increments. We refer again to \cite{DT,GT} for the proof of these statements.

\smallskip

For sake of simplicity, let us assume for the moment
that $V=\R$ (the multidimensional version of the below considerations can be found in \cite{NNRT}), and set $\cac_k(\R)=\cac_k$. Then
the complex $(\cac_*,\delta)$ is an (associative, non-commutative)
graded algebra once endowed with the following product:
for  $g\in\cac_n $ and $h\in\cac_m $ let  $gh \in \cac_{n+m} $
the element defined by
\begin{equation}\label{cvpdt}
(gh)_{t_1,\dots,t_{m+n-1}}g_{t_1,\dots,t_{n}} h_{t_{n},\dots,t_{m+n-1}},
\quad
t_1,\dots,t_{m+n+1}\in\lot.
\end{equation}
In this context, we have the following useful properties.

\begin{proposition}\label{difrul}
The following differentiation rules hold true:
\begin{enumerate}
\item
Let $g,h$ be two elements of $\cac_1 $. Then
\begin{equation}\label{difrulu}
\der (gh) = \der g\,  h + g\, \der h.
\end{equation}
\item
Let $g \in \cac_1 $ and  $h\in \cac_2 $. Then
$$
\der (gh) = \der g\, h + g \,\der h, \qquad
\der (hg) = \der h\, g  - h \,\der g.
$$
\end{enumerate}
\end{proposition}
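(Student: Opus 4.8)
The plan is to verify each identity by a direct, purely algebraic computation: there is no analytic input or fixed point involved, so the whole matter reduces to unraveling the definition of the product \eqref{cvpdt} together with the explicit form of $\der$ on $\cac_1$ and $\cac_2$ recalled in \eqref{eq:simple_application}, and then matching the two sides term by term on a generic tuple of times in $\lot$. I would organize the verification degree by degree, starting with part (1) and then doing the same bookkeeping one level higher for part (2).

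For part (1), take $g,h\in\cac_1$. The product $gh\in\cac_1$ is $(gh)_t=g_th_t$, so $(\der(gh))_{st}=g_th_t-g_sh_s$. Reading off \eqref{cvpdt} with the first factor carrying two arguments and the second one argument gives $(\der g\,h)_{st}=(g_t-g_s)h_t$, while in the other order $(g\,\der h)_{st}=g_s(h_t-h_s)$; adding these telescopes to $g_th_t-g_sh_s$, which is \eqref{difrulu}. For part (2), with $g\in\cac_1$ and $h\in\cac_2$ the product $gh\in\cac_2$ is $(gh)_{st}=g_sh_{st}$, hence $(\der(gh))_{sut}=g_uh_{ut}-g_sh_{st}+g_sh_{su}$. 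On the other side, the product convention puts the middle time at the junction of the two factors, so $(\der g\,h)_{sut}=(g_u-g_s)h_{ut}$ and $(g\,\der h)_{sut}=g_s(h_{ut}-h_{st}+h_{su})$, whose sum is exactly the previous three terms. For the last identity, $(hg)_{st}=h_{st}g_t$ gives $(\der(hg))_{sut}=h_{ut}g_t-h_{st}g_t+h_{su}g_u$, while $(\der h\,g)_{sut}=(\der h)_{sut}g_t$ and $(h\,\der g)_{sut}=h_{su}(g_t-g_u)$, and one checks that $\der h\,g-h\,\der g$ collapses to the same three terms, which is where the minus sign comes from.

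The only thing requiring care — and hence the sole "obstacle" — is keeping straight which time variables are fed into which factor under \eqref{cvpdt}: the shared index always sits at the right end of the left factor and the left end of the right factor. This is precisely what produces the asymmetry between $g_s$ and $g_t$ (respectively $h_{su}$ and $h_{ut}$) in the two summands, and, in $\der(hg)$, the sign flip in front of $h\,\der g$. Once this convention is pinned down, every line is a one-step telescoping cancellation. Finally, the multidimensional versions alluded to (see \cite{NNRT}) follow by applying the scalar computation componentwise, so nothing new is needed there.
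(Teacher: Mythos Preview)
Your direct, term-by-term verification is exactly the standard approach and is correct; the paper does not supply its own proof of this proposition but simply refers the reader to \cite{DT,GT}, where the same unfolding of \eqref{cvpdt} and \eqref{eq:simple_application} is carried out.

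One bookkeeping remark is worth making. In your computations for part (2) you are implicitly using $(\der H)_{sut}=H_{ut}-H_{st}+H_{su}$ for $H\in\cac_2$, which is the negative of the formula displayed in \eqref{eq:simple_application}. This does not affect the validity of the Leibniz identities---they hold under any \emph{consistent} sign choice for $\der$, and in fact \eqref{eq:coboundary} and the two formulas in \eqref{eq:simple_application} are not perfectly mutually consistent as printed (a well-known typographical slippage in this circle of papers). Your choice is the one compatible with $(\der g)_{st}=g_t-g_s$ via \eqref{eq:coboundary} after a global sign, so your telescoping cancellations are all correct; just be explicit about which convention you are adopting if you write this out formally.
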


\vspace{0.2cm}

The iterated integrals of smooth functions on $\lot$ are obviously
particular cases of elements of $\cac$ which will be of interest for
us, and let us recall  some basic  rules for these objects:
consider $f,g\in\cac_1^\infty $, where $\cac_1^\infty $ is the set of
smooth functions from $\lot$ to $\R$. Then the integral $\int dg \,
f$, which will be denoted by
$\cj(dg \,  f)$, can be considered as an element of
$\cac_2^\infty$. That is, for $s,t\in\lot$, we set
$$
\cj_{st}(dg \,  f)\left(\int  dg f \right)_{st} = \int_s^t  dg_u f_u.
$$
The multiple integrals can also be defined in the following way:
given a smooth element $h \in \cac_2^\infty$ and $s,t\in\lot$, we set
$$
\cj_{st}(dg\, h )\equiv
\left(\int dg h \right)_{st} = \int_s^t dg_u h_{us} .
$$
In particular, the double integral $\cj_{st}( df^3df^2\,f^1)$ is defined, for
$f^1,f^2,f^3\in\cac_1^\infty$, as
$$
\cj_{st}( df^3df^2\,f^1)
=\lp \int df^3df^2\,f^1  \rp_{st}
= \int_s^t df_u^3 \,\cj_{us}\lp  df^2 \, f^1 \rp .
$$
Now, suppose that the $n\textsuperscript{th}$ order iterated integral of $df^n\cdots df^2 \,f^1$, still denoted by $\cj(df^n$ $\cdots df^2 \,f^1)$, has been defined for
$f^1,f^2\ldots, f^n\in\cac_1^\infty$.
Then, if $f^{n+1}\in\cac_0^\infty$, we set
\begin{equation}\label{multintg}
\cj_{st}(df^{n+1}df^n \cdots df^2 f^1)\int_s^t  df_u^{n+1}\, \cj_{us}\lp df^n\cdots df^2 \,f^1\rp\,,
\end{equation}
which defines the iterated integrals of smooth functions recursively.
Observe that a $n$th order integral $\cj(df^n\cdots df^2 df^1)$ (instead of $\cj(df^n\cdots df^2 f^1)$) could be defined along the same lines.

\medskip

The following relations between multiple integrals and the operator $\der$ will also be useful in the remainder of the paper:
\begin{proposition}\label{dissec}
Let $f,g$ be two elements of $\cac_1^\infty$. Then, recalling the convention
(\ref{cvpdt}), it holds that
$$
\der f = \cj( df), \qquad
\der\lp \cj( dg f)\rp = 0, \qquad
\der\lp \cj (dg df)\rp =  (\der g) (\der f) = \cj(dg) \cj(df),
$$
and, in general,
$$
 \der \lp \cj( df^n \cdots df^1)\rp   =  \sum_{i=1}^{n-1}
\cj\lp df^n \cdots df^{i+1}\rp \cj\lp df^{i}\cdots df^1\rp.
$$
\end{proposition}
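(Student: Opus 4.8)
The plan is to establish the four identities in order of increasing difficulty, the general formula being obtained by induction on the length $n$ of the word $df^n\cdots df^1$.

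The first identity is just the definition unwound: $(\der f)_{st}=f_t-f_s=\int_s^t df_u=\cj_{st}(df)$. For the second, $\der(\cj(dg\,f))=0$ is the additivity (Chasles relation) of the Stieltjes integral: for $s\le u\le t$ one has $\int_s^t dg_v\,f_v=\int_s^u dg_v\,f_v+\int_u^t dg_v\,f_v$, that is $\cj_{st}(dg\,f)=\cj_{su}(dg\,f)+\cj_{ut}(dg\,f)$. The third identity, $\der(\cj(dg\,df))=(\der g)(\der f)$, can be seen as the case $n=2$ of the induction below, but is also checked by hand in a line: writing $\cj_{st}(dg\,df)$ as a Stieltjes integral of $dg$ against the $1$-increment $\der f$ and splitting it at an intermediate point $u$, the difference of the two increments of $\der f$ collapses to a single increment thanks to $\der\der f=0$, the integral of $dg$ over $[s,u]$ factors out as $(\der g)_{su}$, and one is left with $(\der g)_{su}(\der f)_{ut}=\big((\der g)(\der f)\big)_{sut}$ in the product notation (\ref{cvpdt}); finally $(\der g)(\der f)=\cj(dg)\cj(df)$ by the first identity.

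For the general formula I would induct on $n\ge 1$. The case $n=1$ reads $\der(\cj(df^1))=\der\der f^1=0$ (empty sum), and $n=2$ is the third identity. For the step, assume the formula for words of length $n$ and consider $\cj(df^{n+1}df^n\cdots df^1)$, which by its recursive definition (\ref{multintg}) is the Stieltjes integral of $df^{n+1}$ against the $1$-increment $h:=\cj(df^n\cdots df^1)\in\cac_2^\infty$. Splitting this integral at an intermediate point $u$, exactly as in the two previous identities, and using the identity $h_{vt}-h_{vu}=h_{ut}+(\der h)_{vut}$, one obtains two contributions: a boundary term equal, in the notation (\ref{cvpdt}), to the product of $\der f^{n+1}$ with $h$ (the order of the factors being the one dictated by the Leibniz rule of Proposition \ref{difrul}), and an integral of $df^{n+1}$ against the $2$-increment $\der h$, which is by definition $\cj(df^{n+1}\,\der h)$. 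Substituting the induction hypothesis for $\der h$, using bilinearity of the integral together with the factorisation rule (\ref{cvpdt}) to pull the outer factor out of each summand, and recognising the remaining integral $\cj\big(df^{n+1}\,\cj(df^i\cdots df^1)\big)$ as the length-$(i+1)$ iterated integral $\cj(df^{n+1}df^i\cdots df^1)$, one sees that all the summands — together with the boundary term, which supplies precisely the extreme term absent at length $n$ — reassemble into $\der\big(\cj(df^{n+1}\cdots df^1)\big)=\sum_{i=1}^{n}\cj(df^{n+1}\cdots df^{i+1})\,\cj(df^i\cdots df^1)$, i.e. the claim at length $n+1$.

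No step is conceptually hard, and I expect the only real obstacle to be the bookkeeping of conventions: the order of the arguments in $\cj$, the order of the factors in the product (\ref{cvpdt}), and the signs in the Leibniz rules of Proposition \ref{difrul} (in particular $\der(hg)=\der h\,g-h\,\der g$, which governs how the boundary term splits off), and making sure every line of the induction is consistent with all of them at once. A convenient sanity check is to apply $\der$ a second time to the general formula: using $\der\der=0$, the rule (\ref{difrulu}), and the induction hypothesis, the right-hand side telescopes to $0$, which is exactly the cocycle (Chen-type) compatibility of the iterated integrals with $\der$.
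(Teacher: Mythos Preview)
The paper does not prove Proposition \ref{dissec}: at the start of Section \ref{cpss} it explicitly defers the proofs of both Propositions \ref{difrul} and \ref{dissec} to the references \cite{DT,GT}. So there is nothing in the present paper to compare against; your inductive argument is essentially the standard proof of Chen's relation and is what one finds in those references.

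Two small comments on the write-up. First, your appeal to the Leibniz rules of Proposition \ref{difrul} for the boundary term is a bit of a red herring: $\cj(df^{n+1}\,h)$ is not a product $f^{n+1}h$ in the sense of (\ref{cvpdt}), so those rules do not apply literally. What you actually use is just additivity of the Stieltjes integral together with the algebraic identity $(\der h)_{abc}=h_{ac}-h_{ab}-h_{bc}$, and that is enough. Second, you already flag the bookkeeping hazard, and it bites here: with the paper's convention $\cj_{st}(dg\,h)=\int_s^t dg_v\,h_{vs}$ (the \emph{second} index of $h$ is the lower endpoint $s$), the difference that appears after splitting $(\der\cj)_{sut}$ at $u$ is $h_{vs}-h_{vu}$, not the $h_{vt}-h_{vu}$ you wrote. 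This changes which factor carries the indices $(s,u)$ versus $(u,t)$ in the boundary term and in the inductive sum, so you should redo the index tracking carefully against the paper's definitions before declaring the induction closed. The structure of your argument is right; only the labelling needs to be synchronised with the conventions actually in force.
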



\section{The Young case}
\label{sec:young-case}
In this section, we assume that the driving process $x$ of equation (\ref{eq:volterra}) is a continuous process in $\cac_1^\ga([0,T]; \R^{n})$, for some $\ga \in (1/2,1)$. If $z\in \cac_1^\rho([0,T];\R^{d,n})$, the formalism introduced in the previous section enables to give a meaning to the integral $\int_s^t z_u \, dx_u$ when $\rho+\ga >1$, in the Young sense. This is the issue of the following proposition, borrowed from \cite[Proposition 3]{Gu}:

\begin{proposition}\label{prop:3.1}
If $z\in \cac_1^\rho([0,T];\R^{d,n})$ for some $\rho >0$ such that $\rho+\ga >1$, we can define, for any $s,t \in [0,T]$, 
\begin{equation} \label{integrale-young-exacte}
\cj_{st}(z \, dx):=z_s (\der x)_{st}-\laa_{st}(\der z \, \der x).
\end{equation}
Then $\cj(z \, dx) \in \cac_2^\ga([0,T];\R^d)$ and 
\begin{equation}\label{inegalite-integrale-volterra}
\cn[\cj(z \, dx); \cac_2^\ga([0,T];\R^d)] \leq c_x \lcl \cn[z; \cac_1^0([0,T];\R^{d,n})]+T^\rho \cn[z; \cac_1^\rho([0,T];\R^{d,n})] \rcl.
\end{equation}
\end{proposition}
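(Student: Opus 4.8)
The plan is to verify directly that the element $h:=\der z\,\der x \in \cac_3([0,T];\R^d)$ lies in $\cz\cac_3^{\mu}$ for some $\mu>1$, so that $\laa_{st}(\der z\,\der x)$ is well-defined by Theorem~\ref{prop:Lambda}, and then to read off the regularity and the bound from the contraction estimate \eqref{contraction} together with the explicit form \eqref{integrale-young-exacte}. First I would record that by the product convention \eqref{cvpdt} one has, for $s,u,t\in[0,T]$, $(\der z\,\der x)_{sut}=(\der z)_{su}\,(\der x)_{ut}$, and by Proposition~\ref{difrul}(1) applied to $g=z$, $h=x$ (both in $\cac_1$), together with $\der\der x=0$, $\der\der z=0$, one gets $\der(\der z\,\der x)=\der\der z\,\der x - \der z\,\der\der x=0$; hence $\der z\,\der x\in\cz\cac_3$. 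For the H\"older size, the natural splitting is the trivial one-term decomposition $h=h_1$ with exponents $\rho_1=\rho$, $\mu-\rho_1=\ga$, which is admissible precisely because $0<\rho<\rho+\ga$ and we may take $\mu=\rho+\ga>1$ by hypothesis; this yields
\begin{equation}\label{eq:plan-h-bound}
\cn[\der z\,\der x;\cac_3^{\rho+\ga}(\R^d)]\le \cn[z;\cac_1^{\rho}([0,T];\R^{d,n})]\,\cn[x;\cac_1^{\ga}([0,T];\R^{n})].
\end{equation}

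Next I would apply Theorem~\ref{prop:Lambda} (after a trivial rescaling of the interval $[0,T]$ to $[0,1]$, or by noting that the sewing map and its estimate hold verbatim on any compact interval) to conclude $\laa(\der z\,\der x)\in\cac_2^{\rho+\ga}\subset\cac_2^{\ga}$ with $\norm{\laa(\der z\,\der x)}_{\rho+\ga}\le c_{\rho+\ga}\,\cn[\der z\,\der x;\cac_3^{\rho+\ga}]$. Since $\rho+\ga>\ga$ and we work on a bounded interval, one has $\norm{\laa(\der z\,\der x)}_{\ga}\le T^{\rho}\,\norm{\laa(\der z\,\der x)}_{\rho+\ga}$, which after inserting \eqref{eq:plan-h-bound} produces the $T^{\rho}\,\cn[z;\cac_1^{\rho}]$ contribution in \eqref{inegalite-integrale-volterra}. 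For the remaining term $z_s(\der x)_{st}$ in \eqref{integrale-young-exacte}, I would simply bound $\lln z_s(\der x)_{st}\rrn\le \cn[z;\cac_1^0([0,T];\R^{d,n})]\,\cn[x;\cac_1^\ga([0,T];\R^n)]\,\lln t-s\rrn^\ga$, giving the $\cn[z;\cac_1^0]$ contribution; here the constant $c_x$ absorbs $\cn[x;\cac_1^\ga([0,T];\R^n)]$ and the sewing constant $c_{\rho+\ga}$. Adding the two pieces and using the triangle inequality for the $\ga$-H\"older norm on $\cac_2$ gives \eqref{inegalite-integrale-volterra}; in particular $\cj(z\,dx)\in\cac_2^\ga([0,T];\R^d)$.

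I do not expect any serious obstacle here, as this is essentially Gubinelli's Young integral recast in the $\cac_*$ formalism; the only points requiring a little care are (i) checking the cocycle condition $\der(\der z\,\der x)=0$ via the correct differentiation rule of Proposition~\ref{difrul} and the signs therein, and (ii) keeping track of the powers of $T$ when passing between the $(\rho+\ga)$- and $\ga$-H\"older norms so that the final constant is genuinely of the form $c_x$ (depending only on $x$, $\ga$, $\rho$) times the bracketed quantity. Should one prefer to avoid the rescaling to $[0,1]$, one can instead invoke the versions of the sewing lemma in \cite{DT,GT} stated on a general interval; the estimate \eqref{contraction} is unchanged.
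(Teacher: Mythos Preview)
Your argument is correct and is precisely the standard Gubinelli construction; the paper does not give a proof but simply borrows the statement from \cite[Proposition~3]{Gu}. One small correction: your justification of $\der(\der z\,\der x)=0$ via Proposition~\ref{difrul}(1) is not quite right, since that rule concerns $g,h\in\cac_1$, not a product of two $\cac_2$-increments; instead apply Proposition~\ref{difrul}(2) with $g=z\in\cac_1$ and $h=\der x\in\cac_2$ to obtain $\der(z\,\der x)=\der z\,\der x+z\,\der\der x=\der z\,\der x$, and then $\der(\der z\,\der x)=\der\der(z\,\der x)=0$ by $\der\der=0$.
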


\begin{remark}
Thanks to Corollary \ref{cor:integration}, $\cj_{st}(z \, dx)$ can also be seen as a Young integral, that is 
\begin{equation}\label{integrale:version-limite}
\cj_{st}(z \, dx)=\lim_{|\Delta| \to 0} \sum_{\Delta} z_{t_i} (\der x)_{t_i t_{i+1}}.
\end{equation}
Nevertheless, as we shall see in a moment, the exact expression (\ref{integrale-young-exacte}) of the integral is easier to deal with for computational purposes than the limit expression (\ref{integrale:version-limite}), owing to a better knowledge of the remainder $\Lambda(\der z \, \der x)$.
\end{remark}

With this definition in mind, the Volterra equation (\ref{eq:volterra}) will now be interpreted in the Young sense, and is written as:
\begin{equation}\label{eq:volterra-young}
y_t=a+\cj_{0t}(\si(t,.,y_.) \, dx ).
\end{equation}
The next lemma ensures that the latter integral is well-defined:
\begin{lemma}\label{verif-integrabilite}
If $y\in \cac_1^\ga([0,T];\R^d)$ and $\si \in \cac^{1,\textbf{\textit{b}}}([0,T]^2 \times \R^d; \R^{d,n})$, then, for any $t\geq 0$, $\si(t,.,y_.) \in \cac_1^\ga([0,T]; \R^{d,n})$ and
\begin{equation}\label{inegalite-sigma-0}
\cn[\si(t,.,y_.);\cac_1^\ga] \leq c_\si  (T^{1-\ga}+\cn[y;\cac_1^\ga]).
\end{equation}
\end{lemma}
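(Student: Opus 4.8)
The plan is to show two things: first that $t \mapsto \si(t,\cdot,y_\cdot)$, viewed for fixed $t$ as a path in $u$, lies in $\cac_1^\ga([0,T];\R^{d,n})$ with a controlled H\"older seminorm, and second to bound the $\cac_1^0$-norm, so that the combination gives the estimate \eqref{inegalite-sigma-0}. Fix $t\in[0,T]$ and write $z^t_u := \si(t,u,y_u)$. The sup-norm is immediate from boundedness of $\si$: $\cn[z^t;\cac_1^0] = \sup_u |\si(t,u,y_u)| \le \norm{\si}_\infty \le c_\si$, so the only real work is the H\"older seminorm in the $u$-variable.

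For the H\"older bound, I would estimate the increment $(\der z^t)_{su} = \si(t,u,y_u) - \si(t,s,y_s)$ by inserting the intermediate term $\si(t,s,y_u)$ and using the triangle inequality:
\begin{equation*}
|(\der z^t)_{su}| \le |\si(t,u,y_u) - \si(t,s,y_u)| + |\si(t,s,y_u) - \si(t,s,y_s)|.
\end{equation*}
The first term is controlled by the mean value theorem applied to the second argument: since $\si \in \cac^{1,\textbf{\textit{b}}}$, $\norm{D_2\si}_\infty \le c_\si$, hence this term is $\le c_\si |u-s| \le c_\si T^{1-\ga} |u-s|^\ga$ (using $|u-s|^{1-\ga} \le T^{1-\ga}$). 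The second term is controlled by the mean value theorem in the third argument: it is $\le \norm{D_3\si}_\infty |y_u - y_s| \le c_\si \cn[y;\cac_1^\ga] |u-s|^\ga$. Dividing by $|u-s|^\ga$ and taking the supremum over $s,u$ yields $\cn[z^t;\cac_1^\ga] \le c_\si(T^{1-\ga} + \cn[y;\cac_1^\ga])$, and combining with the sup-norm bound (absorbing constants into $c_\si$) gives \eqref{inegalite-sigma-0}. Continuity of $u \mapsto z^t_u$ (needed for membership in $\cac_2$, hence $\cac_1^\ga$) follows from continuity of $\si$ and of $y$.

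There is no serious obstacle here; this is essentially the standard "composition of a $\cac^1_b$ function with a H\"older path stays H\"older" argument, with the mild extra point that the frozen first variable $t$ plays no role once one uses regularity in the second and third slots. The only thing to be slightly careful about is the bookkeeping that the constant $c_\si$ may be taken to depend only on $\norm{\si}_\infty$, $\norm{D_2\si}_\infty$ and $\norm{D_3\si}_\infty$ and not on $t$ or $T$, so that \eqref{inegalite-sigma-0} is genuinely uniform in $t\in[0,T]$; this is clear since all the derivative bounds used are the global ones from the definition of $\cac^{1,\textbf{\textit{b}}}$. With this lemma in hand, Proposition \ref{prop:3.1} applies (with $\rho = \ga$, so that $\rho + \ga = 2\ga > 1$) and the integral $\cj_{0t}(\si(t,\cdot,y_\cdot)\,dx)$ in \eqref{eq:volterra-young} is well defined.
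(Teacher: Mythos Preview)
Your proof is correct and is essentially the same as the paper's: the paper applies the mean value inequality directly to $(u,y_u)\mapsto\si(t,u,y_u)$ to get $\norm{\si(t,v,y_v)-\si(t,u,y_u)}\le\norm{D\si}_\infty(|v-u|+\cn[y;\cac_1^\ga]|v-u|^\ga)$, which is exactly your two-term split with the intermediate point $\si(t,s,y_u)$ combined into one step. Your additional remarks on the sup-norm and on uniformity in $t$ are fine but not strictly needed, since in this paper $\cn[\,\cdot\,;\cac_1^\ga]$ denotes only the H\"older seminorm of the increment.
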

\begin{proof}
This is obvious: recall that we denote by $D\si$ the gradient of $\si$. Then, if $0\leq u <v \leq T$ we get:
$$\norm{\si(t,v,y_v)-\si(t,u,y_u)} \leq \norm{D\si}_\infty \lp |v-u|+\cn[y;\cac_1^\ga] |v-u|^\ga \rp.$$
Hence $\cn[\si(t,.,y_.);\cac_1^\ga] \leq \norm{D\si}_\infty (T^{1-\ga}+\cn[y;\cac_1^\ga])$.

\end{proof}

We are now in position to prove the announced existence and uniqueness result for the Volterra equation in the Young case:
\begin{theorem}\label{theo:young-case}
Assume that the driving process $x$ is an element of $\cac_1^\ga([0,T];\R^n)$ with $\ga>1/2$. Let $\ka \in (0,1)$ such that $\ka(1+\ga) >1$, $a\in \R^d$, $\si \in \cac^{2,\textbf{\textit{b}},\ka}([0,T]^2 \times \R^d; \R^{d,n})$. Then Equation (\ref{eq:volterra-young}) admits a unique solution in $\cac_1^\ga([0,T];\R^d)$.
\end{theorem}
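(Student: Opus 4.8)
The plan is to solve equation~(\ref{eq:volterra-young}) by a fixed point argument on a suitable closed ball of $\cac_1^\ga([0,T];\R^d)$, first on a small interval and then by concatenation on $[0,T]$. Concretely, for $y \in \cac_1^\ga$, set
$$
(\Gamma y)_t = a + \cj_{0t}\big(\si(t,\cdot,y_\cdot)\, dx\big),
$$
which is well defined by Lemma~\ref{verif-integrabilite} and Proposition~\ref{prop:3.1}. The first step is to establish the key a priori estimate: $\Gamma y \in \cac_1^\ga([0,T];\R^d)$ with an explicit bound on $\cn[\Gamma y;\cac_1^\ga]$. Using the decomposition~(\ref{decomposition-generale}), I would write
$$
(\der (\Gamma y))_{st} = \cj_{st}\big(\si(t,\cdot,y_\cdot)\, dx\big) + \cj_{0s}\big([\si(t,\cdot,y_\cdot)-\si(s,\cdot,y_\cdot)]\, dx\big).
$$
The first term is controlled by~(\ref{inegalite-integrale-volterra}) and~(\ref{inegalite-sigma-0}), yielding a factor $|t-s|^\ga$. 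For the second term, the point is that $\si(t,w,z)-\si(s,w,z)$ carries a factor $|t-s|$ coming from the boundedness of $D_1\si$, and that $w \mapsto \si(t,w,y_w)-\si(s,w,y_w)$ lies in $\cac_1^\ga$ with $\cac_1^0$- and $\cac_1^\ga$-norms bounded by $c_\si |t-s|(1+\cn[y;\cac_1^\ga])$; applying Proposition~\ref{prop:3.1} again on $[0,s]$ produces a bound of the form $c_x c_\si |t-s|(T^\ga + \dots)(1+\cn[y;\cac_1^\ga])$, hence an $|t-s|^\ga$-bound after absorbing the extra $|t-s|^{1-\ga}$ into a power of $T$. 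Combining, one gets $\cn[\Gamma y;\cac_1^\ga] \le c_{x,\si}\,\varphi(T)\,(1+\cn[y;\cac_1^\ga])$ for some $\varphi(T)\to 0$ as $T\to 0$.

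The second step is the contraction estimate. Given $y^{(1)}, y^{(2)}$ in a ball of radius $R$, I would estimate $\cn[\Gamma y^{(1)} - \Gamma y^{(2)};\cac_1^\ga]$ by the same two-term decomposition applied to the difference, now exploiting that $\si \in \cac^{2,\textbf{\textit{b}},\ka}$ so that $z \mapsto D_3\si$ is $\ka$-H\"older and $\si(t,w,y^{(1)}_w) - \si(t,w,y^{(2)}_w)$ is Lipschitz in the $y$-variable with a $\cac_1^\ga$-norm controlled by $\cn[y^{(1)}-y^{(2)};\cac_1^\ga]$ together with the radius $R$ (this is where the condition $\ka(1+\ga)>1$ is needed, to ensure the relevant H\"older exponent $\ka\ga$ of the remainders still satisfies $\ka\ga + \ga > 1$, or more precisely that the increments of the "$\si$-difference" applied terms live in $\cac_3^{\mu}$ with $\mu>1$). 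The outcome is $\cn[\Gamma y^{(1)}-\Gamma y^{(2)};\cac_1^\ga] \le c_{x,\si,R}\,\psi(T)\,\cn[y^{(1)}-y^{(2)};\cac_1^\ga]$ with $\psi(T)\to 0$. Choosing $T_0$ small enough that $\Gamma$ maps the ball into itself and is a strict contraction there, Banach's fixed point theorem gives a unique solution on $[0,T_0]$.

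The third step is to pass from the local interval $[0,T_0]$ to all of $[0,T]$. The subtlety specific to the Volterra setting is that on a second interval $[T_0, 2T_0]$ the equation is not autonomous in the initial data: the increment $(\der y)_{st}$ for $s,t \ge T_0$ still involves the integral $\int_0^{T_0}[\si(t,u,y_u)-\si(s,u,y_u)]\,dx_u$ over the already-constructed past. One handles this by noting that this "history term", as a function of $(s,t)$ with $s,t\in[T_0,2T_0]$, is a genuine $\cac_1^\ga$-increment with norm bounded in terms of $\cn[y;\cac_1^\ga([0,T_0])]$ and $\norm{D_1\si}_\infty$ (again via the $|t-s|$ gain from $D_1\si$), so it can be absorbed as a known forcing term; the fixed point on $[T_0,2T_0]$ then goes through with the same estimates, with $\varphi,\psi$ now depending on $T_0$ and on the size of the solution on $[0,T_0]$. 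Since the contraction time $T_0$ can be chosen uniformly (it depends only on $c_x, c_\si, \ga, \ka$ and the a priori bound on $\cn[y;\cac_1^\ga]$, which one shows stays finite on bounded intervals), finitely many iterations cover $[0,T]$ and the pieces glue into a unique global solution in $\cac_1^\ga([0,T];\R^d)$.

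I expect the main obstacle to be the careful bookkeeping in the a priori and contraction estimates for the second, Volterra-specific term of~(\ref{decomposition-generale}): one must track simultaneously the $\cac_1^0$ and $\cac_1^\ga$ norms of the integrands $\si(t,\cdot,y_\cdot)-\si(s,\cdot,y_\cdot)$ (and their $y$-differences), verify that the resulting products of increments indeed land in $\cac_3^\mu$ with $\mu>1$ so that the sewing map $\Lambda$ applies, and make sure every factor of $|t-s|$ extracted from the regularity of $\si$ in its first variable is correctly converted into either a gain of H\"older exponent or a small power of $T$. The condition $\ka(1+\ga)>1$ is precisely what keeps the remainder terms in the right H\"older class, and pinning down where it enters is the delicate point; the rest is a standard Picard iteration.
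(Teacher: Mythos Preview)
Your approach is essentially the paper's: define $\Gamma$, decompose $(\der\Gamma y)_{st}$ via~(\ref{decomposition-generale}), prove invariance of a ball and then a contraction, and finally extend. Your identification of where $\ka(1+\ga)>1$ enters is also correct: for the contraction on the Volterra term one only controls $[\si^t-\si^s](\cy)-[\si^t-\si^s](\cyti)$ in $\cac_1^{\ka\ga}$ (this is the paper's Lemma~\ref{estimation-sigma}, inequality~(\ref{inegalite-sigma-3})), and $\ka\ga+\ga>1$ is what makes the sewing map applicable.

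The gap is in your third step. You write that ``the contraction time $T_0$ can be chosen uniformly'' because ``the a priori bound on $\cn[y;\cac_1^\ga]$ \dots\ stays finite on bounded intervals''. But the contraction constant is $c_{x,\si}\,\eta^{1-\ga}(1+N)$ where $N$ is the radius of the invariant ball, so the contraction step $\eta$ \emph{does} shrink as $N$ grows, and you have not explained why the iteration terminates. The paper's mechanism is more delicate (Proposition~\ref{extension-cas-young}): because the Young estimates are \emph{linear} in $\cn[y;\cac_1^\ga]$, the invariance bound on $[0,T_0+\ep]$ reads
\[
\cn[\Gamma z;\cac_1^\ga([0,T_0+\ep])]\le c_{x,\si}\big\{1+\cn[\yun;\cac_1^\ga([0,T_0])]+\ep^{1-\ga}\cn[z;\cac_1^\ga([0,T_0+\ep])]\big\},
\]
so the coefficient $\ep^{1-\ga}$ in front of the unknown norm is independent of the past solution. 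One therefore fixes $\ep=(2c_{x,\si})^{-1/(1-\ga)}$ once and for all and obtains an invariant ball of radius $N_1=2c_{x,\si}(1+\cn[\yun;\cac_1^\ga([0,T_0])])$. The contraction step $\eta\le\ep$ then depends on $N_1$, but one iterates the contraction \emph{inside} $[T_0,T_0+\ep]$ with the same $N_1$ throughout (this is Lemma~\ref{lem:stable-ball}), covering $[T_0,T_0+\ep]$ in finitely many sub-steps. Since $\ep$ is fixed, $\lceil T/\ep\rceil$ outer iterations cover $[0,T]$. Your sketch conflates these two levels; making the distinction explicit is what turns the argument into a proof. (This is exactly the point where the rough case $\ga\le 1/2$ fails, cf.\ Section~\ref{sec:5.3}: there the estimate is quadratic and no uniform $\ep$ exists.)
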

This theorem can be obviously applied to the fractional Brownian motion, in the following sense:
\begin{corollary}
Let $B$ be a $n$-dimensional fractional Brownian motion with Hurst parameter $H>1/2$, defined on a complete probability space $(\Omega,\cf,P)$. Then almost surely, $B$ fulfills the hypotheses of Theorem \ref{theo:young-case}.
\end{corollary}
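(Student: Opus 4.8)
The statement to be proved is that a $n$-dimensional fractional Brownian motion $B$ with Hurst parameter $H>1/2$ almost surely satisfies the hypotheses of Theorem \ref{theo:young-case}. Examining that theorem, the only hypothesis concerning the driving signal is that $x$ must be an element of $\cac_1^\ga([0,T];\R^n)$ for some $\ga>1/2$ (the remaining hypotheses concern only the deterministic data $\ka$, $a$ and $\si$, which the user of the theorem is free to choose). Thus the entire content reduces to showing that almost surely the sample paths of $B$ belong to $\cac_1^\ga([0,T];\R^n)$ for some $\ga\in(1/2,1)$, i.e.\ that $B$ admits a modification whose trajectories are $\ga$-H\"older continuous on $[0,T]$ for such a $\ga$, and that moreover $B_0=0$ so that it genuinely defines an increment-type object in the sense of Section \ref{sec:incr}.

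\textbf{Key steps.} First I would reduce to the one-dimensional case, since a $n$-dimensional fBm is a vector of $n$ independent scalar fBms and H\"older continuity of the vector path is equivalent to H\"older continuity of each component. Second, I would recall the defining covariance structure of a scalar fBm: $B$ is a centered Gaussian process with $\E[B_sB_t]=\tfrac12(|s|^{2H}+|t|^{2H}-|t-s|^{2H})$, from which one reads off that $B_0=0$ and, crucially, that for all $s,t\in[0,T]$
\begin{equation}\label{eq:fbm-increment-variance}
\E\big[|B_t-B_s|^2\big]=|t-s|^{2H}.
\end{equation}
Third, since $B_t-B_s$ is Gaussian, \eqref{eq:fbm-increment-variance} together with the scaling of Gaussian moments gives, for every integer $p\ge 1$, a constant $c_p$ with $\E[|B_t-B_s|^{2p}]=c_p\,|t-s|^{2pH}$. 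Fourth, I would invoke the Kolmogorov--Chentsov continuity criterion: whenever $\E[|B_t-B_s|^{q}]\le C|t-s|^{1+\beta}$ for some $q>0$ and $\beta>0$, the process admits a modification whose paths are $\ga$-H\"older on $[0,T]$ for every $\ga<\beta/q$. Applying this with $q=2p$ and $\beta=2pH-1$ (which is positive as soon as $2pH>1$, in particular for $p$ large since $H>0$) yields $\ga$-H\"older sample paths for every $\ga<H-\tfrac1{2p}$. Letting $p\to\infty$ shows that the modification has $\ga$-H\"older paths for every $\ga<H$; fixing any $\ga\in(1/2,H)$ then gives, for that modification, finite $\ga$-H\"older norm $\cn[B;\cac_1^\ga([0,T];\R^n)]<\infty$ almost surely. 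Combined with $B_0=0$ a.s., this places almost every trajectory of $B$ in $\cac_1^\ga([0,T];\R^n)$ with $\ga>1/2$, which is exactly the hypothesis on the driving signal required by Theorem \ref{theo:young-case}. Henceforth one works with this H\"older-continuous modification, still denoted $B$.

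\textbf{Main obstacle.} There is no genuine obstacle here: the result is a standard consequence of the Gaussian nature of fBm and the Kolmogorov continuity theorem, and the only subtlety worth a sentence is that the convention of this paper (Section \ref{sec:incr}) is to work with increments, so one should observe that $B$, being a process with $B_0=0$, is canonically identified with the $1$-increment $(\der B)_{st}=B_t-B_s$, and the $\ga$-H\"older norm of the path coincides with $\cn[\der B;\cac_2^\ga]$; no separate argument is needed. One may also remark in passing that the exponent $H$ is optimal (the paths are not $H$-H\"older, only $\ga$-H\"older for $\ga<H$), but this is irrelevant for invoking Theorem \ref{theo:young-case}, which merely needs \emph{some} exponent strictly above $1/2$.
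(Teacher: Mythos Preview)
Your proposal is correct: the paper itself gives no proof for this corollary (it is stated as an immediate consequence, prefaced only by ``This theorem can be obviously applied to the fractional Brownian motion''), and what you have written is precisely the standard Kolmogorov--Chentsov argument that justifies this obviousness. There is nothing to compare; your write-up simply fills in the details the authors chose to omit.
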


We divide the proof of Theorem \ref{theo:young-case} into two propositions: first, we will look for a local solution defined on some interval $[0,T_0]$ with $0<T_0\leq T$, and then we will settle a patching argument to extend it onto the whole interval $[0,T]$.

\

\noindent\textbf{Notations}. Before going into the details of the proof, let us mention a few conventions that will be used in the sequel. We assume that we always work with a fixed (finite) horizon $T$ to be distinguished from the intermediate times $T_1,T_0,...$. In particular, this means that the constants that will appear in the below calculations may depend on $T$ without explicit note.\\
For the sake of conciseness, let us denote $\cy_u=(u,y_u) \in [0,T] \times \R^d$ and $\si^t(\cy_u)=\si(t,\cy_u)$.

\

The local existence and uniqueness result for our Volterra equation is contained in the following:
\begin{proposition}\label{prop:3.6}
Under the hypothesis of Theorem \ref{theo:young-case}, there exists $T_0 \in (0,T]$ such that Equation (\ref{eq:volterra-young}) admits a unique solution in $\cac_1^\ga([0,T_0];\R^d)$.
\end{proposition}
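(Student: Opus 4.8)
The plan is to set up a fixed-point argument in a suitable ball of $\cac_1^\ga([0,T_0];\R^d)$ for $T_0$ small enough. First I would introduce the map $\Gamma$ sending $y \in \cac_1^\ga([0,T_0];\R^d)$ with $y_0 = a$ to $\Gamma(y)$ defined by $(\Gamma(y))_t = a + \cj_{0t}(\si^t(\cy_.)\, dx)$; by Lemma~\ref{verif-integrabilite}, $\si^t(\cy_.) \in \cac_1^\ga([0,T_0];\R^{d,n})$ for each fixed $t$, so Proposition~\ref{prop:3.1} (with $\rho = \ga$, using $2\ga > 1$) makes the right-hand side meaningful. The delicate point, already flagged in the introduction via the decomposition \eqref{decomposition-generale}, is that the integrand depends on the final time $t$, so before anything else I would establish that $\Gamma(y)$ is genuinely $\ga$-H\"older: writing, for $s<t$,
\begin{equation}\label{eq:plan-decomp}
(\der \Gamma(y))_{st} = \cj_{st}(\si^t(\cy_.)\,dx) + \cj_{0s}\big((\si^t - \si^s)(\cy_.)\, dx\big),
\end{equation}
the first term is controlled by $\cn[\si^t(\cy_.);\cac_1^\ga]\,\norm{x}_\ga |t-s|^\ga$ through \eqref{inegalite-integrale-volterra}, while for the second term I would use the regularity of $\si$ in its first variable: $\norm{(\si^t-\si^s)(\cy_u)} \lesssim \norm{D_1\si}_\infty |t-s|$ and similarly for its increments in $u$, so that $\cn[(\si^t-\si^s)(\cy_.);\cac_1^\ga] \lesssim |t-s|$, and then \eqref{inegalite-integrale-volterra} applied on $[0,s]$ gives a bound of order $|t-s|$, hence $O(|t-s|^\ga)$ on a bounded interval. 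This shows $\Gamma$ maps $\cac_1^\ga$ into itself, with a norm estimate of the schematic form $\cn[\der\Gamma(y);\cac_2^\ga] \le c_{\si,x}\,T_0^{\,\eta}(1 + \cn[y;\cac_1^\ga])$ for some $\eta>0$ coming from the extra factors of $T_0$ in \eqref{inegalite-integrale-volterra} and \eqref{inegalite-sigma-0}; choosing $T_0$ small and then a ball of fixed radius $M$ (e.g. $M = 1$) centered at the constant path $a$, one gets stability $\Gamma(B_M) \subseteq B_M$.

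Next I would prove the contraction estimate. Given $y^{(1)}, y^{(2)}$ in that ball, I would estimate $\der(\Gamma(y^{(1)}) - \Gamma(y^{(2)}))$ using the same splitting \eqref{eq:plan-decomp} applied to the difference of the two integrands. The main tool is again \eqref{inegalite-integrale-volterra} combined with the Lipschitz-type bounds on $\si$: since $\si \in \cac^{2,\textbf{\textit{b}},\ka}$, one has $\norm{D\si(t,u,z) - D\si(t,u,z')} \lesssim \norm{z-z'}^\ka$, which lets one bound $\cn[\si^t(\cy^{(1)}_.) - \si^t(\cy^{(2)}_.); \cac_1^\ga]$ in terms of $\cn[y^{(1)}-y^{(2)};\cac_1^\ga]$ (this is where the second-derivative and $\ka$-H\"older hypothesis is used, to control the increment in $u$ of the difference of integrands; the condition $\ka(1+\ga)>1$ is presumably needed to keep an exponent $>1$ so that $\laa$ can be applied to the relevant $\cac_3$ element). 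Collecting terms, one obtains
\[
\cn[\Gamma(y^{(1)}) - \Gamma(y^{(2)}); \cac_1^\ga([0,T_0];\R^d)] \le c\, T_0^{\,\eta}\, \cn[y^{(1)}-y^{(2)}; \cac_1^\ga([0,T_0];\R^d)],
\]
and shrinking $T_0$ further makes $c\,T_0^{\,\eta} < 1/2$, so $\Gamma$ is a strict contraction on the complete metric space $B_M$ (complete since it is a closed subset of the Banach space of $\ga$-H\"older increments with fixed initial value $a$). The Banach fixed point theorem then yields a unique fixed point $y$ in $B_M$, which solves \eqref{eq:volterra-young} on $[0,T_0]$.

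Finally, uniqueness in all of $\cac_1^\ga([0,T_0];\R^d)$ (not merely in $B_M$) requires a short separate argument: any solution $y$ automatically satisfies $\cn[y;\cac_1^\ga([0,T_1];\R^d)] \le $ (bounded) as $T_1 \downarrow 0$ by the self-improving estimate above, so two solutions agree on a possibly smaller $[0,T_1]$ by the contraction, and then a standard connectedness/iteration argument over $[0,T_0]$ extends the agreement — here one must be a little careful because, in the Volterra setting, the equation on $[T_1,T_0]$ is \emph{not} simply a fresh Volterra equation with new initial data (the term $\int_0^{T_1}[\si(t,u,y_u)-\si(s,u,y_u)]\,dx_u$ couples to the past), but since that past is already determined and common to both solutions, the difference still satisfies a linear-type estimate that propagates uniqueness. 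I expect the genuine obstacle to be precisely this last point together with the $t$-dependence bookkeeping in \eqref{eq:plan-decomp}: making sure every occurrence of the "frozen final time" $t$ in the integrand is handled so that the resulting increment carries the right power of $|t-s|$ and the right power of $T_0$, and that the $\cac_3$-elements fed into $\laa$ genuinely lie in $\cac_3^\mu$ with $\mu>1$, which is what the hypotheses $\ga>1/2$, $\ka(1+\ga)>1$, and $\si\in\cac^{2,\textbf{\textit{b}},\ka}$ are there to guarantee.
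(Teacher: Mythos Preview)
Your proposal is correct and follows essentially the same route as the paper: the same splitting \eqref{decomposition-generale}, the invariance-of-a-ball estimate via \eqref{inegalite-integrale-volterra} together with the $|t-s|$-gain from the regularity of $\si$ in its first variable, and then a contraction step (the paper packages the three needed bounds---on $[\si^t-\si^s](\cy)$, on $\si^t(\cy)-\si^t(\cyti)$, and on $[\si^t-\si^s](\cy)-[\si^t-\si^s](\cyti)$---as Lemma~\ref{estimation-sigma}). One small slip worth correcting: since $\si \in \cac^{2,\textbf{\textit{b}},\ka}$, the gradient $D\si$ is actually Lipschitz (it is $D^2\si$ that is merely $\ka$-H\"older), so the exponent $\ka$ enters only through the double-increment term $\laa_{0s}\big(\der([\si^t-\si^s](\cy^{(1)}) - [\si^t-\si^s](\cy^{(2)}))\, \der x\big)$, which lands in $\cac_2^{\ga(1+\ka)}$ via \eqref{inegalite-sigma-3} and is exactly the place where the hypothesis $\ka(1+\ga)>1$ is used.
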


\begin{proof}
We are going to resort to a fixed point argument. To this end, let us associate to each $y\in \cac_1^\ga([0,T_0])$ the element $z=\Gamma(y)$ defined by
$$z_t=\Gamma(y)_t=y_0+\cj_{0t}(\si^t(\cy_.) \, dx).$$
The solution we are looking for will then be constructed as a fixed point of $\Gamma$. 

\smallskip

\noindent
{\it Step 1: Invariance of a ball}. Fix a time $T_1 \in (0,T]$ ($T_1$ will be chosen retrospectively). Let $y\in \cac_1^\ga([0,T_1])$ such that $y_0=a$ and set $z=\Gamma(y)$, where, of course, the application $\Gamma$ has been adapted to $[0,T_1]$. 

\smallskip

At this point, let us remind the reader of some specificity of the Volterra setting that we evoked in the introduction. As in (\ref{decomposition-generale}), the increment $(\der z)_{ts}$ can be decomposed as a sum of two terms that will receive a distinct treatment: $I^1_{st}=\cj_{st}(\si^t(\cy) \, dx)$ and $I^2_{st}=\cj_{os}([\si^t-\si^s](\cy) \, dx)$. In order to estimate those two integrals, we shall of course resort to inequality (\ref{inegalite-integrale-volterra}). However, as far as $I^2_{st}$ is concerned, it is clear that the latter inequality will not be sufficient so as to retrieve $\lln t-s \rrn$-increments (remember that we are looking for an estimation of $\cn[z;\cac_1^\ga]$, hence a relation of the form $\norm{I^2_{st}} \leq \lln t-s \rrn^\ga f(y)$). This is where the following lemma, which also anticipates the contraction argument, will come into play.

\begin{lemma}\label{estimation-sigma}
Let $I=[a,b] \subset [0,T]$ and $y, \yti \in \cac_1^{\ga}(I;\R^d)$ such that $y_a=\yti_a$. Then, under the hypothesis of Theorem \ref{theo:young-case}, for any $s,t \in I$,
\begin{equation}\label{inegalite-sigma-1}
\cn[[\si^t-\si^s](\cy); \cac_1^\ga(I) ] \leq c_\si  \lln t-s \rrn \lcl 1+\cn[y;\cac_1^\ga(I)] \rcl,
\end{equation}

\begin{equation}\label{inegalite-sigma-2}
\cn[\si^t(\cy)-\si^t(\cyti); \cac_1^\ga(I) ] \leq c_\si  \lcl 1+\cn[y; \cac_1^\ga(I)]+\cn[\yti;\cac_1^\ga(I)] \rcl \, \cn[y-\yti; \cac_1^\ga(I) ],
\end{equation}

\begin{multline}\label{inegalite-sigma-3}
\cn[[\si^t-\si^s](\cy)-[\si^t-\si^s](\cyti); \cac_1^{\ka\ga}(I) ]\\ \leq c_\si  \lln t-s \rrn \lcl 1+\cn[y; \cac_1^\ga(I)]^\ka+\cn[\yti;\cac_1^\ga(I)]^\ka \rcl \, \cn[y-\yti; \cac_1^\ga(I) ].
\end{multline}
\end{lemma}
\begin{proof}
See Appendix.
\end{proof}

Now, let us go into the details. To deal with $I^1$, use (\ref{inegalite-integrale-volterra}) to get
\bean
\norm{I^1_{st} } &\leq & c_x \lln t-s \rrn^\ga \lcl \cn[\si^t(\cy); \cac_1^0] +T_1^\ga \cn[\si^t(\cy);\cac_1^\ga] \rcl \\
&\leq & c_{x,\si} \lln t-s\rrn^\ga \lcl 1+T_1^\ga \cn[\si^t(\cy); \cac_1^\ga] \rcl,
\eean
and thus, thanks to Lemma \ref{verif-integrabilite}, $\cn[I^1; \cac_2^\ga] \leq c_{x,\si} \lcl 1+T_1^\ga \cn[y;\cac_1^\ga]\rcl$.

\smallskip

Split $I^2$ into $I^2=I^{2,1}+I^{2,2}$, with
$$I^{2,1}_{st}= [ \si^t-\si^s](\cy_0) \, (\der x)_{0s} \quad \mbox{and} \quad I^{2,2}_{st}=\laa_{0s}( \der([ \si^t-\si^s](\cy)) \, \der x).$$
First, notice that $\norm{I^{2,1}_{st} } \leq \norm{D\si }_\infty \lln t-s\rrn \cn[x;\cac_1^\ga] T_1^\ga$, which gives $\cn[I^{2,1}; \cac_2^\ga] \leq c_{x,\si} T_1$. As for $I^{2,2}$, use the contraction property (\ref{contraction}) and the estimate (\ref{inegalite-sigma-1}) to deduce
\bean
\norm{I_{st}^{2,2} }& \leq & c_x \, \cn[ [\si^t-\si^s](\cy); \cac_1^\ga ]\, T_1^\ga\\
&\leq & c_{x,\si} \, \lln t-s \rrn \lcl 1+\cn[y; \cac_1^\ga] \rcl T_1^{2\ga},  
\eean
so that $\cn[I^{2,2}; \cac_2^\ga]  \leq c_{x,\si} \, T_1^{1+\ga} \lp 1+\cn[y;\cac_1^\ga] \rp$.

\smallskip

Therefore, putting together our bounds on $I^1$ and $I^2$, we have obtained $\cn[z;\cac_1^\ga ] \leq c_{x,\si} \lcl 1+T_1^\ga \cn[y;\cac_1^\ga] \rcl$. We can thus pick $T_1 \in (0,T]$ such that for each $0 < T_0 \leq T_1$, there exists a radius $A_{T_0}$ for which the ball
$$B_{T_0,a}^{A_{T_0}} =\lcl y\in \cac_1^\ga([0,T_0]): \ y_0=a, \ \cn[y;\cac_1^\ga([0,T_0])] \leq A_{T_0} \rcl$$
is invariant by $\Gamma$. Notice that the radius $A_{T_0}$ is an increasing function of $T_0$, a fact which will be used in the second step.

\smallskip

\noindent
{\it Step 2: Contraction property}. Fix a time $T_0 \in (0,T_1]$ and let $y,\yti \in B_{T_0,a}^{A_{T_0}}$. Set $z=\Gamma(y), \zti=\Gamma(\yti)$ and decompose again $\der(z-\zti)$ into $\der(z-\zti)=J^{1,1}+J^{1,2}+J^2$, with
$$J^{1,1}_{st}=(\si^t(\cy_s)-\si^t(\cyti_s)) \, (\der x)_{st} \quad , \quad J_{st}^{1,2}=\laa_{st} \lp \der(\si^t(\cy)-\si^t(\cyti)) \, \der x \rp,$$
$$J_{st}^2=\laa_{0s} \lp \der( [\si^t-\si^s](\cy)-[\si^t-\si^s](\cyti)) \, \der x \rp.$$
Let us now estimate the $\ga$-Hölder norm of each of these three terms.

\smallskip

\noindent
{\it Case of} $J^{1,1}$: We have $\cn[J^{1,1}; \cac_2^\ga ] \leq \norm{D\si}_\infty \, \cn[y-\yti; \cac_1^0] \, \cn[x;\cac_1^\ga]$. However, since $y_0=\yti_0=a$, we have $y_s-\yti_s= y_s-\yti_s-(y_0-\yti_0)$, $\cn[y-\yti;\cac_1^0] \leq \cn[y-\yti; \cac_1^\ga ] \, T_0^\ga$, so that
$$\cn[J^{1,1}; \cac_2^\ga ] \leq c_{x,\si} \, \cn[y-\yti; \cac_1^\ga ] \, T_0^\ga.$$

\smallskip

\noindent
{\it Case of} $J^{1,2}$: Inequalities (\ref{contraction}) and (\ref{inegalite-sigma-2}) yield:
\bean
\norm{J_{st}^{1,2} } &\leq & c \, \cn[\si^t(\cy)-\si^t(\cyti); \cac_1^\ga ] \, \cn[x;\cac_1^\ga ] \, \lln t-s\rrn^{2\ga}\\
& \leq & c_{x,\si} \lln t-s\rrn^\ga\lp 1+\cn[y;\cac_1^\ga]+\cn[\yti; \cac_1^\ga] \rp  \cn[y-\yti; \cac_1^\ga] T_0^\ga,
\eean
which gives $\cn[J^{1,2}; \cac_2^\ga] \leq c_{x,\si} \lp 1+\cn[y;\cac_1^\ga]+\cn[\yti; \cac_1^\ga] \rp  \cn[y-\yti; \cac_1^\ga] T_0^\ga$.

\smallskip

\noindent
{\it Case of} $J^2$: By (\ref{contraction}) and (\ref{inegalite-sigma-3}),
\bean
\norm{J_{st}^2 }&\leq & c \, \cn[[\si^t-\si^s](\cy)-[\si^t-\si^s](\cyti); \cac_1^{\ka\ga} ] \, \cn[x;\cac_1^\ga] \, T_0^{\ga(1+\ka)}\\
&\leq & c_{\si,x} \lln t-s \rrn^\ga T_0^{1+\ga\ka} \cn[y-\yti; \cac_1^\ga] \lcl 1+\cn[y; \cac_1^\ga]^\ka+\cn[\yti; \cac_1^\ga]^\ka \rcl,
\eean
or in other words $\cn[J^2;\cac_2^\ga] \leq c_{\si,x}  T_0^{1+\ga\ka} \cn[y-\yti; \cac_1^\ga] \lcl 1+\cn[y; \cac_1^\ga]^\ka+\cn[\yti; \cac_1^\ga]^\ka \rcl$.

\

Therefore, $\cn[z-\zti; \cac_1^\ga] \leq c_{\si,x} T_0^\ga\, \cn[y-\yti; \cac_1^\ga ] \lcl 1+A_{T_0} \rcl$. Since the radius $A_{T_0}$ decreases as $T_0$ tends to $0$, we can choose a sufficiently small time $T_0 \in (0,T_1]$ such that the application $\Gamma$, restricted to the (stable) ball $B_{T_0,a}^{A_{T_0}}$, is a strict contraction. Hence the existence and uniqueness of a fixed point in this set.

\end{proof}

The next proposition summarizes our considerations in order to get the global existence and uniqueness for solution to equation (\ref{eq:volterra-young}):
\begin{proposition}\label{extension-cas-young}
Under the hypothesis of Theorem \ref{theo:young-case}, the local solution $y^{(1)}$ defined by the previous proposition can be extended to a global and unique solution in $\cac_1^\ga([0,T];\R^d)$.
\end{proposition}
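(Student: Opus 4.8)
The plan is to extend the local solution $y^{(1)}$ from $[0,T_0]$ to all of $[0,T]$ by a patching argument, running the fixed-point machinery of Proposition \ref{prop:3.6} on consecutive subintervals $[T_k,T_{k+1}]$ and checking that the pieces glue into a single $\cac_1^\ga([0,T];\R^d)$ solution. The subtlety, absent from the ordinary diffusion case, is that on the interval $[T_1,T_2]$ the increment $(\der y)_{st}$ for $T_1\le s\le t\le T_2$ still sees the whole past $\int_0^s[\si^t-\si^s](\cy_u)\,dx_u$, which now includes the already-constructed piece on $[0,T_1]$. So the equation to be solved on $[T_1,T_2]$ is not the same Volterra equation with a fresh initial condition, but rather a Volterra-type equation with an extra forcing term coming from the past. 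I would make this precise first.

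Concretely, suppose $y$ has already been constructed on $[0,T_1]$ as a solution of (\ref{eq:volterra-young}), with $y_{T_1}=b$. For $t\in[T_1,T_2]$ write
\begin{equation}\label{eq:patch-decomp}
y_t=b+\cj_{0T_1}\bigl([\si^t-\si^{T_1}](\cy)\,dx\bigr)+\cj_{T_1 t}\bigl(\si^t(\cy)\,dx\bigr)=:b+\phi_t+\cj_{T_1 t}\bigl(\si^t(\cy)\,dx\bigr),
\end{equation}
where the ``past'' term $\phi_t$ depends only on the already-known values of $y$ on $[0,T_1]$, hence is a fixed function of $t$. The point is that, by the very same computation used to bound $I^{2}$ in Step 1 of Proposition \ref{prop:3.6} (applied with the roles of $\si^t-\si^s$ played by $\si^t-\si^{T_1}$), one has $\phi\in\cac_1^\ga([T_1,T_2];\R^d)$ with a norm bound depending only on $\cn[y;\cac_1^\ga([0,T_1])]$, $x$, $\si$ and the horizon; in fact $\cn[\phi;\cac_1^\ga([T_1,T_2])]\le c_{x,\si}\,|T_2-T_1|\,(1+\cn[y;\cac_1^\ga([0,T_1])])$, using that $\si^t-\si^{T_1}$ produces a factor $|t-T_1|$ by regularity of $\si$ in its first variable. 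Then I would set up the map $\Gamma$ on $\cac_1^\ga([T_1,T_2];\R^d)$ by $\Gamma(w)_t=b+\phi_t+\cj_{T_1 t}(\si^t(\cw)\,dx)$ and rerun Steps 1 and 2 of the previous proof verbatim: the new integral term $\cj_{T_1 t}(\si^t(\cw)\,dx)$ is handled by (\ref{inegalite-integrale-volterra}) and Lemma \ref{estimation-sigma} exactly as $I^1$ was, while $\phi_t$ contributes only a bounded additive constant to the invariance estimate and nothing at all to the contraction estimate (since $\phi$ does not depend on $w$). Hence there is $T_2\in(T_1,T]$, depending only on $T_1$ and on the data, and a unique fixed point $w$ on $[T_1,T_2]$ with $w_{T_1}=b$; gluing $y^{(1)}$ and $w$ gives a solution on $[0,T_2]$.

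To see that this yields a global solution, I would observe that the length $T_2-T_1$ of each step can be bounded below uniformly: the only thing that enters the choice of step size is the radius $A$ of the invariant ball, and—as remarked after Step 1 of Proposition \ref{prop:3.6} and again here—one can in fact run the argument so that the ball radius and hence the admissible step length depend on the data only through a quantity that is controlled on $[0,T]$. More carefully, if we let $K_1:=\cn[y;\cac_1^\ga([0,T_1])]$, the past term satisfies $\cn[\phi;\cac_1^\ga([T_1,T_2])]\le c\,|T_2-T_1|(1+K_1)$, so choosing $|T_2-T_1|\le 1/(1+K_1)$ keeps $\phi$ uniformly bounded; then the step size is $h=h(K_1,x,\si,T)>0$. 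One then needs the a priori bound that $\cn[y;\cac_1^\ga([0,S])]$ cannot blow up in finite time: this follows from a Gronwall-type argument on the function $S\mapsto\cn[y;\cac_1^\ga([0,S])]$, using exactly inequalities (\ref{inegalite-integrale-volterra}), (\ref{inegalite-sigma-0}) and (\ref{inegalite-sigma-1}) to get, for $0\le s<t\le S$, a bound of the form $\norm{(\der y)_{st}}\le c_{x,\si}|t-s|^\ga\,(1+\sup_{[0,S]}\cn[y;\cac_1^\ga([0,\cdot])])$, and then a standard subdivision/iteration to linearize and close the estimate. Consequently the step size $h$ stays bounded below on $[0,T]$, only finitely many steps $T_0<T_1<T_2<\cdots$ are needed to exhaust $[0,T]$, and the glued function is the desired global solution. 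Uniqueness on $[0,T]$ is immediate: two global solutions agree on $[0,T_0]$ by Proposition \ref{prop:3.6}, and then the decomposition (\ref{eq:patch-decomp}) shows that on each subsequent interval they both solve the same fixed-point equation (with the same past term $\phi$, since they coincide on the earlier interval), so they agree there too; iterating, they coincide on $[0,T]$.

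The main obstacle is the a priori (no-blow-up) estimate ensuring that the step length $h$ stays uniformly positive over $[0,T]$: unlike in the classical rough-ODE patching, the norm $\cn[y;\cac_1^\ga([0,S])]$ controlling the next step involves the entire history through the term $\int_0^s[\si^t-\si^s](\cy_u)\,dx_u$, so one must verify that the $|t-s|$-gain from the regularity of $\si$ in its first argument is enough to turn (\ref{inegalite-sigma-1}) into a genuinely linear, Gronwall-closable bound on the growth of the Hölder norm. Everything else—reusing $\Gamma$, checking invariance and contraction, gluing, uniqueness—is a direct rerun of Proposition \ref{prop:3.6} with the harmless extra forcing term $\phi$.
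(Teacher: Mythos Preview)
Your approach is correct in spirit but takes a genuinely different route from the paper, and the place you flag as ``the main obstacle'' is indeed where your sketch is thinnest.

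The paper does \emph{not} separate off a forcing term $\phi$ and work on $[T_1,T_2]$; instead it keeps the \emph{global} H\"older norm on $[0,T_0+\ep]$ by letting $\Gamma$ act on functions $z\in\cac_1^\ga([0,T_0+\ep])$ constrained to satisfy $z_{|[0,T_0]}=y^{(1)}$. The invariance estimate then reads
\[
\cn[\Gamma(z);\cac_1^\ga([0,T_0+\ep])]\le c_{\si,x}\bigl(1+\cn[y^{(1)};\cac_1^\ga([0,T_0])]+\ep^{1-\ga}\,\cn[z;\cac_1^\ga([0,T_0+\ep])]\bigr),
\]
which is \emph{linear} in $\cn[z]$ with coefficient $\ep^{1-\ga}$. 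This lets one fix $\ep=(2c_{\si,x})^{-1/(1-\ga)}$ \emph{independently of $y^{(1)}$}; only the ball radius $N_1$ depends on the past. The contraction step needs a smaller $\eta=\eta(N_1)\le\ep$, but since $N_1$ is fixed one sub-iterates in steps of size $\eta$ until $[0,T_0+\ep]$ is covered (this is the content of Lemma~\ref{lem:stable-ball}), and then repeats the outer loop with the \emph{same} $\ep$. No separate a priori bound is required.

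In your scheme, by contrast, the contraction constant on $[T_1,T_2]$ is $ch^\ga(1+A)$ with a ball radius $A$ that grows with $K_1=\cn[y;\cac_1^\ga([0,T_1])]$, so the step size $h$ genuinely depends on $K_1$ and you need the no-blow-up estimate. That estimate is obtainable (for small fixed $h$ one gets $\cn[y;\cac_1^\ga([t_k,t_{k+1}])]\le 2c(1+h^{1-\ga}(1+N(t_k)))$, hence $N(t_{k+1})\le(1+2ch^{1-\ga})N(t_k)+C$, which iterates), but the displayed inequality you propose, $\|(\der y)_{st}\|\le c|t-s|^\ga(1+\sup\cn[y;\cac_1^\ga([0,\cdot])])$, is by itself vacuous; the ``standard subdivision/iteration'' you invoke is exactly where all the work lies, and it succeeds precisely because of the same linearity the paper exploits more directly. (Minor: your H\"older bound on $\phi$ should be $c\,h^{1-\ga}(1+K_1)$, not $c\,h(1+K_1)$.) What the paper's route buys is the avoidance of this detour and a clean contrast with Section~\ref{sec:5.3}, where the analogous estimate is quadratic and no such iteration closes; what your route buys is the more classical ``local existence plus a priori bound implies global existence'' structure.
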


\begin{proof}
In fact, we are going to show the existence of a small $\ep >0$, which \textbf{shall not depend on $y^{(1)}$}, such that $y^{(1)}$ can be extended to a solution on $[0,T_0+\ep]$. The conclusion then follows by a simple iteration argument.

\smallskip

To this end, let us introduce the application $\Gamma$ defined for any $z\in \cac_1^\ga([0,T_0+\ep])$ such that $z_{|[0,T_0]}=y^{(1)}$ as
$$\hz_t=\Gamma(z)_t=\begin{cases}
\yun_t & \mbox{if} \ t\in [0,T_0]\\
a+\cj_{0t}(\si^t(\cz) \, dx) & \mbox{if} \ t\in [T_0,T_0+\ep]
\end{cases}. $$
Just as in the previous proof, we are looking for a fixed point of $\Gamma$.

\smallskip

\noindent
{\it Step 1: Invariance of a ball.} In order to estimate $\cn[\hz; \cac_1^\ga([0,T_0+\ep])]$, let us consider the three cases ($s,t \in [0,T_0]$), ($s,t \in [T_0,T_0+\ep]$) and ($s \leq T_0 \leq t \leq T_0+\ep$).

\smallskip

In the first case, we simply have $\cn[\hz; \cac_1^\ga([0,T_0])] \leq \cn[\yun; \cac_1^\ga([0,T_0])]$. Consider the second case $s, t \in [T_0,T_0+\ep]$, and decompose $(\der \hz)_{st}$ as above, that is $(\der \hz)_{st}=I^{1,1}_{st}+I^{1,2}_{st}+I^{2,1}_{st}+I^{2,2}_{st}$, with
$$I^{1,1}_{st}=\si^t(\cz_s) \, (\der x)_{st} \quad , \quad I_{st}^{1,2}=\laa_{st}(\der(\si^t(\cz)) \, \der x),$$
$$I_{st}^{2,1}=[\si^t-\si^s](\cz_0) \, (\der x)_{0s} \quad , \quad I_{st}^{2,2}=\laa_{0s}(\der([\si^t-\si^s](\cz)) \, \der x).$$
Let us now bound each of these terms: first, owing to (\ref{contraction}) and (\ref{inegalite-sigma-0}), $I_{st}^{1,2}$ can be estimated as follows:
\bean
\norm{I_{st}^{1,2}} &\leq & c \, \cn[\si^t(\cz); \cac_1^\ga([0,T_0+\ep])] \, \cn[x; \cac_1^\ga] \lln t-s \rrn^{2\ga}\\
&\leq & c_{\si,x} \lcl 1+\cn[z;\cac_1^\ga([0,T_0+\ep])]\rcl \lln t-s\rrn^{2\ga}.
\eean 
It is thus readily checked that $\cn[I^{1,2}; \cac_2^\ga([T_0,T_0+\ep])] \leq c_{\si,x} \, \ep^\ga \lcl 1+\cn[z;\cac_1^\ga([0,T_0+\ep])] \rcl$.
Thanks to (\ref{contraction}) and (\ref{inegalite-sigma-1}), we also have the following bound for $I^{2,2}_{st}$: 
\bean
\norm{I_{st}^{2,2}} &\leq & c \, \cn[ [\si^t-\si^s](\cz); \cac_1^\ga([0,T_0+\ep])] \, \cn[x; \cac_1^\ga] \, T^{2\ga}\\
&\leq & c_{\si,x} \lln t-s\rrn  \lcl 1+\cn[z;\cac_1^\ga([0,T_0+\ep])] \rcl,
\eean
which gives $\cn[I^{2,2}; \cac_2^\ga([T_0,T_0+\ep])] \leq c_{\si,x} \, \ep^{1-\ga} \lcl 1+\cn[z;\cac_1^\ga([0,T_0+\ep])] \rcl$.
Since trivially $\cn[I^{i,1};\cac_2^\ga([T_0,T_0+\ep])] \leq c_{\si,x}$ for $i=1,2$, we get
$$\cn[\hz;\cac_1^\ga([T_0,T_0+\ep])] \leq c_{\si,x} \lcl 1+\ep^{1-\ga} \cn[z;\cac_1^\ga([0,T_0+\ep])] \rcl.$$
Finally, let us treat the third case $0\leq s \leq T_0 \leq t\leq T_0+\ep$: write
\bean
\lefteqn{\norm{(\der \hz)_{st}}=\norm{(\der \hz)_{sT_0}+(\der \hz)_{T_0t}}}\\
&\leq & \cn[\yun; \cac_1^\ga([0,T_0])] \lln T_0-s\rrn^\ga+\cn[\hz; \cac_1^\ga([T_0,T_0+\ep])] \lln t-T_0\rrn^\ga\\
&\leq &  \lcl \cn[\yun; \cac_1^\ga([0,T_0])]+\cn[\hz; \cac_1^\ga([T_0,T_0+\ep])] \rcl \lln t-s \rrn^\ga.
\eean

\smallskip

Putting together the three cases we have just studied, the following bound is obtained for $\hz$ on the whole interval $[0,T_0+\ep])]$:
$$\cn[\hz;\cac_1^\ga([0,T_0+\ep])] \leq c_{\si,x}^1 \lcl 1+\cn[\yun; \cac_1^\ga([0,T_0])]+\ep^{1-\ga} \cn[z;\cac_1^\ga([0,T_0+\ep])] \rcl.$$
Therefore, set 
$$\ep=(2 c_{\si,x}^1)^{-1/(1-\ga)} \, \mbox{($\ep$ does not depend on $\yun$) and} \ N_1=2c_{\si,x}^1 \lcl 1+\cn[\yun; \cac_1^\ga([0,T_0])]\rcl,$$ so that if $\cn[z;\cac_1^\ga([0,T_0+\ep])]\leq N_1$, then $\cn[\hz;\cac_1^\ga([0,T_0+\ep])] \leq \frac{N_1}{2}+\frac{N_1}{2} =N_1$. In other words,  we have found that the ball
$$B_{\yun, T_0,\ep}^{N_1}=\lcl z\in \cac_1^\ga([0,T_0+\ep]): \ z_{|[0,T_0]}=\yun, \ \cn[z;\cac_1^\ga([0,T_0+\ep])] \leq N_1 \rcl$$
is invariant by $\Gamma$.

\smallskip

\noindent
{\it Step 2: Contraction property.} This second step consists in finding a small $\eta \in (0,\ep]$ such that the previous application $\Gamma$ (adapted to $[0,T_0+\eta]$) satisfies a contraction property when restricted to some (invariant) ball.

\smallskip

Let $\zun,\zde \in B_{\yun, T_0,\eta}^{N_1}$ and set $\hzun=\Gamma(\zun)$, $\hzde=\Gamma(\zde)$. Of course, since $\hzun$ and $\hzde$ share the same initial condition on $[0,T_0]$, we have $\cn[\hzun-\hzde; \cac_1^\ga([0,T_0+\eta])]=\cn[\hzun-\hzde; \cac_1^\ga([T_0,T_0+\eta])]$. Let then $T_0 \leq s <t \leq T_0+\eta$ and as in the proof of Proposition \ref{prop:3.6}, use the decomposition $\der(\hzun-\hzde)_{st}=J_{st}^{1,1}+J_{st}^{1,2}+J_{st}^2$, where
$$J_{st}^{1,1}=( \si^t(\czun_s)-\si^t(\czde_s) ) \, (\der x)_{st} \quad , \quad J_{st}^{1,2}=\laa_{st}(\der(\si^t(\czun)-\si^t(\czde)) \, \der x),$$
$$J_{st}^{2}=\laa_{0s}( \der([\si^t-\si^s](\czun)-[\si^t-\si^s](\czde)) \, \der x).$$
We will bound again each of these terms separately: for $J^{1,1}$, we have 
$$\norm{J^{1,1}_{st}} \leq \norm{D\si}_\infty \Vert \zun_s-\zde_s \Vert \cn[x;\cac_1^\ga] \lln t-s \rrn^\ga.
$$ 
But
$$\norm{\zun_s-\zde_s}=\norm{[\zun_s-\zde_s]-[\zun_{T_0}-\zde_{T_0}]} \leq \cn[\zun-\zde;\cac_1^\ga([0,T_0+\eta])] \, \eta^\ga,$$
and so
\begin{equation}\label{demo:majo-i-1-1}
\cn[J^{1,1}; \cac_2^\ga([T_0,T_0+\eta])] \leq c_{x,\si} \, \eta^\ga \cn[\zun-\zde;\cac_1^\ga([0,T_0+\eta])].
\end{equation}
The term $J_{st}^{1,2}$ can be estimated as follows: by (\ref{contraction}) and (\ref{inegalite-sigma-2}), 
\bean
\norm{J_{st}^{1,2}} &\leq & c \, \cn[\si^t(\czun)-\si^t(\czde);\cac_1^\ga([0,T_0+\eta])] \cn[x;\cac_1^\ga] \lln t-s\rrn^{2\ga}\\
&\leq & c_{\si,x} \lln t-s\rrn^\ga \eta^\ga \lcl 1+2N_1 \rcl \cn[\zun-\zde; \cac_1^\ga([0,T_0+\eta])].
\eean
Finally, according to  (\ref{contraction}) and (\ref{inegalite-sigma-3}), we have:
\bean
\norm{J_{st}^2} &\leq & c \, \cn[[\si^t-\si^s](\czun)-[\si^t-\si^s](\czde); \cac_1^{\ka\ga}([0,T_0+\eta])] \cn[x;\cac_1^\ga] \, T^{\ga(1+\ka)}\\
&\leq & c_{\si,x} \lln t-s \rrn^\ga \eta^{1-\ga} \lcl 1+2N_1^\ka \rcl \cn[\zun-\zde;\cac_1^\ga([0,T_0+\eta])].
\eean
As a result, putting together the bounds on $J_{st}^{1,1}$, $J_{st}^{1,2}$ and $J_{st}^{2}$, we end up with:
$$\cn[\hzun-\hzde;\cac_1^\ga([0,T_0+\eta])] \leq c_{\si,x}^1 \eta^{1-\ga} \lcl 1+N_1^\ka+N_1 \rcl \cn[\zun-\zde;\cac_1^\ga([0,T_0+\eta])].$$

\smallskip

We can now pick $\eta \in (0,\ep]$ such that $c_{\si,x}^1 \eta^{1-\ga} \lcl 1+N_1^\ka+N_1 \rcl \leq \frac{1}{2}$, and the application $\Gamma$ becomes a strict contraction on $B_{\yun, T_0,,\eta}^{N_1}$. It is easy to check (see Lemma \ref{lem:stable-ball} below) that $B_{\yun, T_0,\eta}^{N_1}$ is invariant by $\Gamma$ too, hence the existence and uniqueness of a fixed point in this set, denoted by $y^{(1),\eta}$.

\smallskip

Notice now that the arguments leading to uniqueness remain true on the (stable) ball
$$\lcl z\in \cac_1^\ga([0,T_0+2\eta]): \ z_{|[0,T_0+\eta]}=y^{(1),\eta}, \ \cn[z; \cac_1^\ga([0,T_0+2\eta])] \leq N_1 \rcl.$$
For instance, to establish the equivalent of relation (\ref{demo:majo-i-1-1}) on this extended interval, notice that if $s\in [T_0+\eta,T_0+2\eta]$, 
$$\norm{\zun_s-\zde_s} =\norm{[\zun_s-\zde_s]-[\zun_{T_0+\eta}-\zde_{T_0+\eta}] }\leq \cn[\zun-\zde;\cac_1^\ga([0,T_0+2\eta])] \, \eta^\ga.$$
This enables to extend $y^{(1),\eta}$ into a solution $y^{(1),2\eta}$ on $[0,T_0+2\eta]$, and then $y^{(1),3\eta}$ on $[0,T_0+3\eta]$, ... until $[0,T_0+\eta]$ is covered, as we wished.

\end{proof}

\begin{lemma}\label{lem:stable-ball}
With the notations of the previous proof, the sets 
$$\lcl z\in \cac_1^\ga([0,T_0+l\eta]): \ z_{|[0,T_0+(l-1)\eta]}=y^{(1),(l-1)\eta}, \ \cn[z;\cac_1^\ga([0,T_0+l\eta])] \leq N_1 \rcl$$
are invariant by $\Gamma$.
\end{lemma}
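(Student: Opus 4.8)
The plan is to rerun \emph{Step 1} of the proof of Proposition \ref{extension-cas-young} on the larger interval $[0,T_0+l\eta]$, keeping $[0,T_0]$ as the ``frozen past'', and to proceed by induction on $l\geq 1$ (with the convention $y^{(1),0\eta}:=\yun$). At each stage $y^{(1),(l-1)\eta}$ is a genuine solution of (\ref{eq:volterra-young}) on $[0,T_0+(l-1)\eta]$, a fact that will be used below. So I would fix a $z$ in the set to be shown invariant, put $\hz:=\Gamma(z)$, observe that $\hz_{|[0,T_0+(l-1)\eta]}=y^{(1),(l-1)\eta}$ holds by the very definition of $\Gamma$, and be left only with the bound $\cn[\hz;\cac_1^\ga([0,T_0+l\eta])]\leq N_1$.

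The observation that makes everything go through is that, since $y^{(1),(l-1)\eta}$ solves (\ref{eq:volterra-young}) on $[0,T_0+(l-1)\eta]$ and $z$ coincides with it there, one actually has $\hz_t=a+\cj_{0t}(\si^t(\cz)\,dx)$ for \emph{every} $t\in[0,T_0+l\eta]$, and not merely for $t\geq T_0+(l-1)\eta$. Hence $\hz$ has exactly the structure of the increment handled in \emph{Step 1} of Proposition \ref{extension-cas-young}, the ``frozen'' piece being still $\yun$ on $[0,T_0]$ and the ``new'' interval being $[T_0,T_0+l\eta]$, of length $l\eta\leq\ep$. I would then split $(\der\hz)_{st}$ over $[0,T_0+l\eta]$ according to whether $s,t\in[0,T_0]$ (where $\hz=\yun$), $s,t\in[T_0,T_0+l\eta]$, or $s\leq T_0\leq t$ (this last case being reduced to the previous two by $(\der\hz)_{st}=(\der\hz)_{sT_0}+(\der\hz)_{T_0t}$); on the middle zone I would use again the decomposition $(\der\hz)_{st}=I^{1,1}_{st}+I^{1,2}_{st}+I^{2,1}_{st}+I^{2,2}_{st}$, estimate $I^{1,2}$ and $I^{2,2}$ via (\ref{contraction}), (\ref{inegalite-sigma-0}) and (\ref{inegalite-sigma-1}), and bound $I^{1,1}$ and $I^{2,1}$ trivially. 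Since every constant arising in this computation depends only on $T$, $\si$ and $x$, and not on $l$, $\eta$ or $z$, this reproduces \emph{verbatim}
$$\cn[\hz;\cac_1^\ga([0,T_0+l\eta])]\leq c_{\si,x}^1\lcl 1+\cn[\yun;\cac_1^\ga([0,T_0])]+(l\eta)^{1-\ga}\cn[z;\cac_1^\ga([0,T_0+l\eta])]\rcl,$$
with $c_{\si,x}^1$ the same constant as in Proposition \ref{extension-cas-young} (the one through which $\ep$ and $N_1$ were defined).

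It will then be enough to substitute the definitions: $c_{\si,x}^1\lcl 1+\cn[\yun;\cac_1^\ga([0,T_0])]\rcl=N_1/2$ by the choice of $N_1$, and $c_{\si,x}^1(l\eta)^{1-\ga}\leq c_{\si,x}^1\ep^{1-\ga}=1/2$ since $l\eta\leq\ep$; hence the assumption $\cn[z;\cac_1^\ga([0,T_0+l\eta])]\leq N_1$ forces $\cn[\hz;\cac_1^\ga([0,T_0+l\eta])]\leq\frac{N_1}{2}+\frac{N_1}{2}=N_1$, which is the desired invariance.

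The one point that is not pure routine is the observation in the second paragraph: one has to recognize that the patched map $\Gamma(z)$ can still be written as a single Young integral over all of $[0,T_0+l\eta]$. This is precisely what lets the frozen past be taken as $\yun$ on $[0,T_0]$ --- a \emph{fixed} object, through which $\ep$ and $N_1$ were chosen --- rather than the genuinely extended solution $y^{(1),(l-1)\eta}$; if one split instead at $T_0+(l-1)\eta$, the (a priori larger) H\"older norm of $y^{(1),(l-1)\eta}$ would re-enter the estimate and the radius $N_1$ would not be preserved from one step to the next. Everything else is a line-by-line repetition of bounds already carried out for Proposition \ref{extension-cas-young}.
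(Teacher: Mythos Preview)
Your argument is correct, but it differs from the paper's. You rerun the entire \emph{Step 1} estimate on $[0,T_0+l\eta]$, exploiting that $\hz_t=a+\cj_{0t}(\si^t(\cz)\,dx)$ holds globally and that the ``new'' window $[T_0,T_0+l\eta]$ has length $l\eta\le\ep$; this reproduces the key inequality with $(l\eta)^{1-\ga}$ in place of $\ep^{1-\ga}$ and the conclusion follows at once.

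The paper instead avoids redoing any estimate: given $z$ in the ball, it extends $z$ by a constant on $[T_0+l\eta,T_0+\ep]$ to obtain $\zti\in B_{\yun,T_0,\ep}^{N_1}$, applies the already-proved invariance of $B_{\yun,T_0,\ep}^{N_1}$ to get $\Gamma(\zti)\in B_{\yun,T_0,\ep}^{N_1}$, observes that $\Gamma(\zti)$ is an extension of $\Gamma(z)$ (because $y^{(1),(l-1)\eta}$ is a solution), and concludes by restriction that $\cn[\Gamma(z);\cac_1^\ga([0,T_0+l\eta])]\le\cn[\Gamma(\zti);\cac_1^\ga([0,T_0+\ep])]\le N_1$. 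This extension trick is shorter and reuses Step~1 as a black box; your direct recomputation is more explicit and makes transparent why the constants do not drift with $l$ --- essentially the same observation you single out in your last paragraph, that the frozen past can always be taken as $\yun$ on $[0,T_0]$.
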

\begin{proof}
If $z$ belongs to such a ball, set
$$\zti_t=\begin{cases}
z_t & \mbox{if} \ t \in [0,T_0+l\eta]\\
z_{T_0+l\eta} & \mbox{if} \ t\in [T_0+l\eta,T_0+\ep]
\end{cases}.$$
Clearly, $\zti \in B_{\yun, T_0,\ep}^{N_1}$, so that, thanks to the first step of the previous proof, $\Gamma(\zti) \in B_{\yun, T_0,\ep}^{N_1}$. Now, since $y^{(1),(l-1)\eta}$ is a solution on $[0,T_0+(l-1)\eta]$, we have $\Gamma(\zti)_{|[0,T_0+(l-1)\eta]}=y^{(1),(l-1)\eta}$, which means that $\Gamma(\zti)$ is an extension of $\Gamma(z)$ and as a result
$$\cn[\Gamma(z); \cac_1^\ga([0,T_0+l\eta])] \leq \cn[\Gamma(\zti); \cac_1^\ga([0,T_0+\ep])] \leq N_1.$$

\end{proof}


\section{The Young singular case}
\label{sec:singular-case}

This section is devoted to the study of a particular case of Equation (\ref{eq:volterra}), when the coefficient $\si$ admits a singularity in $(t,u)$ on the diagonal. Namely, we shall consider an equation of the form
\begin{equation}\label{eq:volterra-singular}
y_t=a+\int_0^t (t-u)^{-\al} \psi(y_u) \, dx_u,
\end{equation}
with $\psi: \R^d \to \R^{d,n}$ a sufficiently regular mapping and $x \in \cac_1^\ga([0,T];\R^n)$, for some $\ga$ and $\al$ to be precised. Thus, the application $\si$ appearing in (\ref{eq:volterra}) tends here to explode when approaching the diagonal
$$D \times \R^d =\lcl (t,t,y), \ t\in [0,T],y\in \R^d \rcl.$$
This singularity prevents us from directly applying the algebraic formalism introduced at section \ref{sec:alg-integration} in order to define the integral $\int_0^t (t-u)^{-\al} \psi(y_u) \, dx_u$ above. However, as in Section \ref{sec:young-case}, we shall see that this latter integral can still be defined thanks to a slight extension of Young's interpretation, insofar as the integral will simply be seen as the limit of the associated Riemann sums. In other words, we will be able to set
\begin{equation}\label{integrale-young-singu}
\int_s^t (t-u)^{-\al} \psi(y_u) \, dx_u =\lim_{k \to \infty} \sum_{\Delta_k([s,t))} (t-t_i)^{-\al}\psi(y_{t_i}) \, (\der x)_{t_i t_{i+1}},
\end{equation}
where $\Delta_k([s,t))=\lcl s=t_0 <t_1 < \ldots < t_k <t \rcl$ is any sequence of partitions whose meshes tend to $0$, and where $t_k \to t$. In this context, Theorem \ref{theo:singular-case} is quite close to Theorem \ref{theo:young-case}.

\begin{remark}
The tedious calculations to come will give us an idea of how the $\Lambda$-formalism used in the previous sections makes the writing more fluent (when it can be applied), by avoiding the often cumbersome study of Riemann sums.
\end{remark}

\subsection{Young singular integrals}
This section deals with a rigorous definition of integrals like (\ref{integrale-young-singu}). A first technical lemma in this direction is then the following:
\begin{lemma}\label{lem:4.2}
Let $a<b$, $f \in \cac^{1,\textbf{\textit{b}}}([a,b];\R),g \in \cac_1^{\la_1}([a,b];\R^{d,n}), h \in \cac_1^{\la_2}([a,b];\R^n)$ with $\la_1+\la_2 >1$. Then
$$\int_a^b d(fg)_u \, h_u=\int_a^b df_u \, g_uh_u+\int_a^b dg_u \, f_u h_u,$$
the three integrals being understood in the Young sense.
\end{lemma}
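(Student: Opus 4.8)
The plan is to prove the integration-by-parts / product rule for Young integrals by reducing everything to the $\Lambda$-map formalism of Section \ref{sec:alg-integration}. First I would record that, under the stated H\"older hypotheses $\la_1+\la_2>1$, Proposition \ref{prop:3.1} applies: the increments $g\in\cac_1^{\la_1}$ and $h\in\cac_1^{\la_2}$ (with $f$ bounded and $C^1$, hence Lipschitz, so $fg\in\cac_1^{\la_1}$ as well, since $\la_1\le\la_2$ or in any case $\min(\la_1,\la_2)$ is the relevant exponent) allow us to write each of the three integrals in the exact form \eqref{integrale-young-exacte}. Concretely, for $s,t\in[a,b]$,
\[
\cj_{st}(fg\,dh)=(fg)_s(\der h)_{st}-\laa_{st}\big(\der(fg)\,\der h\big),\quad
\cj_{st}(f\,dh\cdot gh)=\ldots,
\]
with a direct interpretation of $dg$ as a Young integrator via $x\leftrightarrow h$ and integrand $fg$, $fh$ respectively; so each of the three integrals is an element of $\cac_2$ whose increment (under $\der$) is controlled. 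The key observation is that all three integrals are indefinite integrals in the sense of Corollary \ref{cor:integration}, i.e. each $\cj(\cdot)$ is the unique element $F\in\cac_2$ with $(\id-\Lambda\delta)$ acting appropriately, equivalently the unique limit of Riemann sums.

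The core of the argument is then an algebraic identity at the level of Riemann sums followed by a passage to the limit. On a partition $\Pi=\{s=t_0<\dots<t_N=t\}$ one has the exact discrete product rule
\[
(f_{t_i}g_{t_i})(\der h)_{t_it_{i+1}}
= f_{t_i}g_{t_i}(\der h)_{t_it_{i+1}},
\]
and I would instead compare the telescoping sum $\sum_i (\der(fg))_{t_it_{i+1}}h_{t_i}$-type expressions; more precisely, the natural route is to use the algebra rules of Proposition \ref{difrul}: writing $z=fg$, one has $\der(z\,\der h)=\der z\,\der h - z\,\der\der h=\der z\,\der h$ (since $\der\der h=0$), and $\der z=\der f\,g+f\,\der h$-wait, rather $\der(fg)=\der f\cdot g^{\bullet}+f^{\bullet}\cdot\der g$ in the $\cac_*$-product sense of \eqref{cvpdt}. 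The clean way: show that the three increments $\cj(fg\,dh)$, $\cj(f\,dg\cdot h)$ hmm — I would show that $A_{st}:=\cj_{st}(d(fg)\,h)-\cj_{st}(df\,gh)-\cj_{st}(dg\,fh)$ satisfies $\der A=0$ and $A\in\cac_2^{\mu}$ for some $\mu>1$, hence $A=\der(\text{const})=0$ by the uniqueness clause of Theorem \ref{prop:Lambda} together with $A_{ss}=0$. Computing $\der A$ reduces, via Proposition \ref{dissec} and the product rules, to the identity $\der(fg)=(\der f)g+f(\der g)$ at the increment level, i.e. the Leibniz rule \eqref{difrulu}, which makes $\der A$ vanish; the H\"older regularity $A\in\cac_2^{\la_1\wedge\la_2+\,?}$... one checks $\der A\in\cac_3^{1+}$ from $\la_1+\la_2>1$, so Corollary \ref{cor:integration} forces $A$ to be the (zero) indefinite integral of $\der A=0$.

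Alternatively, and perhaps more transparently, I would argue directly with Riemann sums as in \eqref{integrale:version-limite}: on a partition write
\[
(\der(fg))_{t_it_{i+1}}=(\der f)_{t_it_{i+1}}\,g_{t_{i+1}}+f_{t_i}\,(\der g)_{t_it_{i+1}},
\]
multiply by $h$ appropriately and reorganize so that $\sum_i f_{t_i}g_{t_i}(\der h)_{t_it_{i+1}}$ appears, with the discrepancy between $g_{t_{i+1}}$ and $g_{t_i}$ producing a second-order term $\sum_i(\der f)_{t_it_{i+1}}(\der g)_{t_it_{i+1}}h_{t_i}$ of size $O(|\Pi|^{\la_1+\la_2-1})\to0$ since $\la_1+\la_2>1$; the boundary term from $f$ being only Lipschitz is handled by $f\in\cac^{1,\textbf{\textit{b}}}$. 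Passing to the limit using the existence of all three Young integrals (guaranteed by the first step) yields the identity. The main obstacle I anticipate is purely bookkeeping: keeping straight which of $g_{t_i}$ versus $g_{t_{i+1}}$ (and $h_{t_i}$ versus $h_{t_{i+1}}$) appears where, and confirming that every leftover sum is genuinely a negligible second-order remainder controlled by $\la_1+\la_2>1$ rather than a term needing separate identification — this is the ``often cumbersome study of Riemann sums'' the Remark warns about, and it is why I would prefer the $\Lambda$-uniqueness route whenever the bound $\der A\in\cac_3^{1+}$ can be verified cleanly.
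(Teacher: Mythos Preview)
Your ``alternative'' Riemann-sums route is exactly the paper's proof, and it is all that is needed: write
\[
(\der(fg))_{t_it_{i+1}}h_{t_i}=(\der f)_{t_it_{i+1}}g_{t_i}h_{t_i}+f_{t_i}(\der g)_{t_it_{i+1}}h_{t_i}+(\der f)_{t_it_{i+1}}(\der g)_{t_it_{i+1}}h_{t_i},
\]
sum over the partition, and bound the cross term. The paper uses $f\in\cac^{1,\textbf{\textit{b}}}$ (Lipschitz) to get $\sum_i|(\der f)_{t_it_{i+1}}(\der g)_{t_it_{i+1}}h_{t_i}|\le C\,|\Delta|^{\la_1}|b-a|\to 0$; your exponent $\la_1+\la_2-1$ is a harmless underestimate (you momentarily forgot to exploit that $f$ is Lipschitz rather than merely $\la_2$-H\"older).

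Your preferred $\Lambda$-uniqueness route also works, but your write-up conflates two separate points. The vanishing of $\der A$ is automatic and has nothing to do with the Leibniz rule: each Young integral is an exact increment $\der F$, so $\der A=0$ trivially. The real content is that $A\in\cac_2^\mu$ for some $\mu>1$, which you leave as ``$\la_1\wedge\la_2+\,?$''. The computation is that the non-$\Lambda$ part of $A_{st}$ equals $h_s(\der f)_{st}(\der g)_{st}\in\cac_2^{1+\la_1}$, and the three $\Lambda$-terms are each in $\cac_2^{\mu}$ with $\mu>1$ by Theorem~\ref{prop:Lambda}; then $\der A=0$ and $A\in\cac_2^{1+}$ force $A=\Lambda(0)=0$. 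This is a valid argument but, as the paper's three-line proof shows, more machinery than the statement requires.

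A small notational slip: in the lemma the integrator is $fg$ (resp.\ $f$, $g$) and the integrand is $h$ (resp.\ $gh$, $fh$), so the relevant Young integrals are $\cj(h\,d(fg))$ etc., not $\cj(fg\,dh)$ as you wrote at first.
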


\begin{proof}
Consider a partition $\Delta =\{a=t_0<\cdots<t_n=b\}$, with mesh $|\Delta|$, and use the decomposition
$$\sum_i  \der(fg)_{t_i t_{i+1}} h_{t_i}=\sum_i (\der f)_{t_it_{i+1}}g_{t_i}h_{t_i}+\sum_i (\der g)_{t_i t_{i+1}}f_{t_i}h_{t_i}
+\sum_i (\der f)_{t_i t_{i+1}}(\der g)_{t_i t_{i+1}}h_{t_i}.$$
Notice then that
$$\norm{ \sum_i (\der f)_{t_i t_{i+1}}(\der g)_{t_i t_{i+1}}h_{t_i} } \leq \cn[f;\cac^{1,\textbf{\textit{b}}}] \cn[g;\cac_1^{\la_1}] \lln \Delta \rrn^{\la_1} \cn[h;\cac_1^0] \lln b-a \rrn,$$
which tends to $0$ as $|\Delta | \to 0$. The proof is thus easily finished.

\end{proof}

\begin{lemma}\label{lem:4.3}
If $\ga > \al$ and $\psi \in \cac^{1,\textbf{\textit{b}}}(\R^d;\R^{d,n})$, then for any $\ka$ such that $(\ga-\al)+\ka >1$ and any $y \in \cac_1^\ka([0,T];\R^d)$, the integral $I_{st}:=\int_s^t (t-u)^{-\al}\psi(y_u) \, dx_u$ exists in the Young sense. More specifically, for any $0\le s< t \le T$ and $0<\ep<t-s$, set $I_{st}^\ep:=\int_s^{t-\ep} (t-u)^{-\al} \psi(y_u) \, dx_u$, defined in the Young sense of Proposition \ref{prop:3.1}. Then $I_{st}^\ep$ converges to a quantity, which is denoted again by $\int_s^{t} (t-u)^{-\al} \psi(y_u) \, dx_u$.
\end{lemma}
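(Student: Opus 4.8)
\textbf{Proof proposal for Lemma \ref{lem:4.3}.}

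The plan is to show that the family $(I_{st}^\ep)_{0<\ep<t-s}$ is Cauchy as $\ep\to 0$, so that the limit exists and can be taken as the definition of $\int_s^t (t-u)^{-\al}\psi(y_u)\,dx_u$. First I would fix $0<\ep'<\ep<t-s$ and write
$$
I_{st}^\ep-I_{st}^{\ep'}=-\int_{t-\ep}^{t-\ep'} (t-u)^{-\al}\psi(y_u)\,dx_u,
$$
both integrals on the right being honest Young integrals in the sense of Proposition \ref{prop:3.1}, since on any interval $[s,t-\eta]$ with $\eta>0$ the map $u\mapsto (t-u)^{-\al}\psi(y_u)$ is bounded and $\ka'$-H\"older for $\ka'=\min(\ka,1)$ (products and compositions of H\"older functions, together with the fact that $u\mapsto (t-u)^{-\al}$ is smooth away from $u=t$), and $\ka'+\ga>1$ is guaranteed by the hypothesis $(\ga-\al)+\ka>1$ together with $\al>0$. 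The key point is then to estimate $\bigl\|\int_{t-\ep}^{t-\ep'} (t-u)^{-\al}\psi(y_u)\,dx_u\bigr\|$ and show it goes to $0$ as $\ep\to 0$, uniformly in $\ep'$.

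For this estimate I would apply inequality \eqref{inegalite-integrale-volterra} of Proposition \ref{prop:3.1} on the interval $[t-\ep,t-\ep']$, with integrand $z_u=(t-u)^{-\al}\psi(y_u)$ and exponent $\rho=\ka$ (assuming, as we may, $\ka\le 1$; the case $\ka>1$ is only easier). This gives a bound of the form
$$
\Bigl\| \int_{t-\ep}^{t-\ep'}(t-u)^{-\al}\psi(y_u)\,dx_u \Bigr\|
\le c_x\,\ep^\ga\Bigl\{ \cn[z;\cac_1^0([t-\ep,t-\ep'])] + \ep^\ka\,\cn[z;\cac_1^\ka([t-\ep,t-\ep'])] \Bigr\}.
$$
It remains to control the sup-norm and the $\ka$-H\"older norm of $z$ on $[t-\ep,t-\ep']$. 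For the sup-norm, $\norm{z_u}\le \norm{\psi}_\infty (t-u)^{-\al}\le \norm{\psi}_\infty (\ep')^{-\al}$; this blows up, but it is multiplied by $\ep^\ga$, and — crucially — the H\"older increment gains a factor that compensates. Indeed, for $t-\ep\le u<v\le t-\ep'$,
$$
\norm{z_v-z_u} \le \norm{\psi}_\infty\,|(t-v)^{-\al}-(t-u)^{-\al}| + (t-v)^{-\al}\norm{\psi(y_v)-\psi(y_u)},
$$
and using $|(t-v)^{-\al}-(t-u)^{-\al}|\le \al\,(t-v)^{-\al-1}|v-u|\le \al\,(t-v)^{-\al-1}|v-u|^\ka|v-u|^{1-\ka}$ one extracts, after optimizing, a bound of the type $\cn[z;\cac_1^\ka([t-\ep,t-\ep'])]\le c_\psi(1+\cn[y;\cac_1^\ka])\,(\ep')^{-\al-\ka}\ep^{?}$; the honest way to carry this out is to split the interval $[t-\ep,t-\ep']$ dyadically into pieces $[t-2^{-k}\ep,\,t-2^{-k-1}\ep]$ on each of which $(t-u)\asymp 2^{-k}\ep$, apply the previous clean estimate on each piece, and sum the resulting geometric-type series, the summability being exactly where the condition $\ga-\al>0$ (equivalently $\ga>\al$) enters. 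Carrying the bookkeeping through, one finds $\norm{I_{st}^\ep-I_{st}^{\ep'}}\le C\,\ep^{\ga-\al}$ with $C$ depending only on $\ga,\al,\ka,\norm{\psi}_{\cac^{1,\textbf{\textit{b}}}},\cn[x;\cac_1^\ga]$ and $\cn[y;\cac_1^\ka([0,T])]$, which tends to $0$ as $\ep\to0$; hence $(I_{st}^\ep)$ is Cauchy and converges.

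The main obstacle is precisely this last estimate: the naive application of \eqref{inegalite-integrale-volterra} on $[t-\ep,t-\ep']$ produces the factor $(\ep')^{-\al}$ from the sup-norm of the singular weight, which a priori is not controlled as $\ep'\to0$. The resolution — dyadic decomposition of the shrinking interval near the singularity, clean H\"older estimates on each dyadic block where the weight is essentially constant, and summation of the geometric series, convergent because $\ga>\al$ — is the technical heart of the lemma, and it is exactly the reason the hypotheses $\ga>\al$ and $(\ga-\al)+\ka>1$ are imposed. (One should also note in passing that the limit $I_{st}$ so defined agrees with the limit of the singular Riemann sums \eqref{integrale-young-singu}: on each $[s,t-\ep]$ the Young integral $I_{st}^\ep$ is itself such a limit by Corollary \ref{cor:integration}, and a diagonal argument combining this with the Cauchy estimate above identifies the two.)
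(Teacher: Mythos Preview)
Your argument is correct and is a genuinely different route from the paper's. The paper proves Lemma \ref{lem:4.3} by integration by parts: it rewrites $I_{st}^\ep=\int_s^{t-\ep}(t-u)^{-\al}\psi(y_u)\,d(x_u-x_t)$, integrates by parts, and then uses the product rule for Young integrals (their Lemma \ref{lem:4.2}) to split $\int_s^{t-\ep}d\bigl((t-u)^{-\al}\psi(y_u)\bigr)(x_t-x_u)$ into a convergent Lebesgue integral and a Young integral of $d(\psi(y))$ against the function $u\mapsto (x_t-x_u)/(t-u)^{\al}$, which they show is $(\ga-\al)$-H\"older and hence extends across $u=t$. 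The boundary term $\ep^{-\al}\psi(y_{t-\ep})(x_{t-\ep}-x_t)$ is what vanishes like $\ep^{\ga-\al}$.

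Your direct Cauchy estimate via dyadic decomposition of $[t-\ep,t-\ep']$ bypasses integration by parts entirely: on each block $[t-2^{-k}\ep,\,t-2^{-k-1}\ep]$ the weight $(t-u)^{-\al}$ is essentially constant ($\asymp(2^{-k}\ep)^{-\al}$), the Young bound \eqref{inegalite-integrale-volterra} gives a contribution of order $(2^{-k}\ep)^{\ga-\al}$, and the series sums since $\ga>\al$. This is cleaner in that it avoids the auxiliary Lemma \ref{lem:4.2}, and it is in fact the very technique the paper itself deploys in the next result (Proposition \ref{prop:4.5}) to control the H\"older norm of $t\mapsto I_{0t}$; so your approach has the virtue of unifying the existence and regularity arguments. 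The paper's integration-by-parts proof, on the other hand, yields an explicit closed-form representation of the limiting integral as a sum of well-defined pieces, which can be convenient for further manipulations.
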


\begin{proof}

Let $\ep >0$. If $u,v \in [s,t-\ep]$,
\bean
\norm{ \frac{\psi(y_v)}{(t-v)^\al}-\frac{\psi(y_u)}{(t-u)^\al} } &\leq & \norm{\psi}_\infty \lln \frac{1}{(t-v)^\al}-\frac{1}{(t-u)^\al}\rrn+\lln \frac{1}{(t-u)^\al}\rrn \norm{ \psi(y_v)-\psi(y_u)}\\
&\leq & \norm{\psi}_\infty \frac{\al}{\ep^{\al+1}} \lln v-u\rrn+\frac{1}{\ep^\al}\norm{\psi'}_\infty \cn[y;\cac_1^\ka([0,T])] \lln v-u\rrn^\ka,
\eean
hence $ u \mapsto \frac{\psi(y_u)}{(t-u)^\al} \in \cac^\ka([s,t-\ep])$ and since $\ka+\ga >1$, the integral $I_{st}^\ep$ is well-defined in the Young sense of Proposition \ref{prop:3.1}. We will now study the convergence of $I_{st}^{\ep}$ when $\ep\to 0$. 

\smallskip

It is easily checked from relation (\ref{integrale-young-exacte}) that one is allowed to perform a integration by parts in $I_{st}^{\ep}$, in order to deduce
\bean
I_{st}^{\ep}&=&\int_s^{t-\ep}(t-u)^{-\al} \psi(y_u)\, dx_u \ = \ \int_s^{t-\ep} (t-u)^{-\al}\psi(y_u)\, d(x_u-x_t)\\
&=& \frac{\psi(y_{t-\ep})}{\ep^\al} (x_{t-\ep}-x_t)+\frac{\psi(y_s)}{(t-s)^\al}(x_t-x_s)+\int_s^{t-\ep} d\lp \frac{\psi(y_u)}{(t-u)^\al} \rp\, (x_t-x_u)  \\
&:=&I_{st}^{\ep,1}+I_{st}^{\ep,2}+I_{st}^{\ep,3}.
\eean
Let us analyze now the three terms we have obtained: since
$$\left\Vert \frac{\psi(y_{t-\ep})}{\ep^\al} (x_{t-\ep}-x_t) \right\Vert \leq \norm{\psi}_\infty \cn[x;\cac_1^\ga]  \ep^{\ga-\al},$$
it is readily checked that  $I_{st}^{\ep,1}\to 0$ as $\ep\to 0$. In order to treat the term $I_{st}^{\ep,3}$ observe that, according to Lemma \ref{lem:4.2}, we have
\begin{multline}\label{int-par-parties-young}
I_{st}^{\ep,3}=\int_s^{t-\ep} d\lp \frac{\psi(y_u)}{(t-u)^\al} \rp(x_t-x_u) \\
=\int_s^{t-\ep}  d(\psi(y_u))\frac{(x_t-x_u)}{(t-u)^{\al}}+\al \int_s^{t-\ep} \frac{du}{(t-u)^{\al+1}}\psi(y_u) (x_t-x_u):=I_{st}^{\ep,3,1}+I_{st}^{\ep,3,2}.
\end{multline}
Notice then that
$$\left\Vert \frac{\psi(y_u)}{(t-u)^{\al+1}}(x_t-x_u) \right\Vert \leq \norm{\psi}_\infty \cn[x;\cac_1^\ga] \frac{1}{\lln t-u\rrn^{1-(\ga-\al)}},$$
and thus $u \mapsto  \frac{\psi(y_u)}{(t-u)^{\al+1}}(x_t-x_u)$ is (Lebesgue-)integrable in $t$. This trivially yields  the convergence of $I_{st}^{\ep,3,2}$ as $\ep\to 0$. As for the first term $I_{st}^{\ep,3,1}$ in (\ref{int-par-parties-young}), we know that $u\mapsto \psi(y_u) \in \cac_1^\ka$. In order to study the convergence of $I_{st}^{\ep,3,2}$, it only remains to prove that the application $\vp:[s,t) \to \R^n, \, u \mapsto  \frac{(x_t-x_u)}{(t-u)^{\al}}$, continuously extended by $0$ in $t$, belongs to $\cac_1^\rho([s,t])$, for some $\rho >0$ satisfying  $\rho +\ka >1$.

\smallskip

However, if $0 <u <v <t$,
\bean
\lefteqn{\norm{\vp_v-\vp_u }}\\
&\leq & \norm{ x_t-x_v } \lln (t-v)^{-\al}-(t-u)^{-\al}\rrn +\lln (t-u)^{-\al}\rrn \norm{ x_t-x_v-(x_t-x_u)}\\
&\leq & \cn[x;\cac_1^\ga] \lln t-v\rrn^\ga \lp \frac{1}{\lln t-v\rrn^\al} \rp^{1-(\ga-\al)}\lp \al\frac{\lln v-u\rrn}{\lln t-v\rrn^{\al+1}} \rp^{\ga-\al}+\frac{1}{\lln v-u\rrn^\al} \cn[x;\cac_1^\ga] \lln v-u\rrn^\ga\\
&\leq & c \, \cn[x;\cac_1^\ga] \lln v-u \rrn^{\ga-\al}+\cn[x;\cac_1^\ga] \lln v-u\rrn^{\ga-\al}, 
\eean
while if $u <v=t$, as $\vp_t=0$,
$$\norm{ \vp_v-\vp_u } = \frac{\norm{x_v-x_u}}{\lln (v-u)^\al\rrn} \leq \cn[x;\cac_1^\ga] \lln v-u \rrn^{\ga-\al}.$$
Thus, $\vp \in \cac_1^{\ga-\al}([0,t])$, which achieves the proof since, by hypothesis, $(\ga-\al)+\ka >1$.
\end{proof}

It is also important to control the Hölder continuity of the singular Young integral defined above. Before we turn to this task, let us quote an elementary estimate for further use:
\begin{lemma}
Let $0<s<t\leq T$. For any $\be \in [0,1]$, there exists a constant $c_\be$ such that for any $u\in (0,s)$, 
\begin{equation}\label{estimation-f-s-t}
\lln (t-u)^{-\al}-(s-u)^{-\al} \rrn \leq c_\be \lln s-u\rrn^{-\al-\be} \lln t-s \rrn^\be.
\end{equation}
\end{lemma}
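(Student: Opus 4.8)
The inequality to be established is
$$\lln (t-u)^{-\al}-(s-u)^{-\al} \rrn \leq c_\be \lln s-u\rrn^{-\al-\be} \lln t-s \rrn^\be$$
for $0<u<s<t\le T$ and $\be\in[0,1]$. The plan is to interpolate between the two trivial endpoint cases $\be=0$ and $\be=1$.

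First I would treat $\be=1$. Writing $\phi(r)=r^{-\al}$, the mean value theorem gives $(s-u)^{-\al}-(t-u)^{-\al}=\al\,\xi^{-\al-1}(t-s)$ for some $\xi\in(s-u,t-u)$, and since $\xi\ge s-u$ we get $|\phi(s-u)-\phi(t-u)|\le \al\,(s-u)^{-\al-1}(t-s)$, which is exactly the claim with $c_1=\al$. Next, for $\be=0$, note $0\le (s-u)^{-\al}-(t-u)^{-\al}\le (s-u)^{-\al}$ since $t-u>s-u$, so the claim holds with $c_0=1$.

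Then I would close the general case by the elementary inequality $\min(x,y)\le x^{1-\be}y^\be$ for $x,y\ge 0$ and $\be\in[0,1]$. Applying it with $x$ the $\be=0$ bound $(s-u)^{-\al}$ and $y$ the $\be=1$ bound $\al(s-u)^{-\al-1}(t-s)$, and using that $\lln (t-u)^{-\al}-(s-u)^{-\al}\rrn$ is bounded by both (so by their minimum), one obtains
$$\lln (t-u)^{-\al}-(s-u)^{-\al} \rrn \le \bigl((s-u)^{-\al}\bigr)^{1-\be}\bigl(\al(s-u)^{-\al-1}(t-s)\bigr)^{\be}=\al^\be\,(s-u)^{-\al-\be}(t-s)^\be,$$
so $c_\be=\al^\be$ works. (One could also just take $c_\be=\max(1,\al)$ uniformly in $\be$.)

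There is no real obstacle here; the only mild point to be careful about is making sure the two endpoint bounds are genuinely of the form (constant)$\times$(quantity), so that the interpolation inequality $\min(x,y)\le x^{1-\be}y^\be$ applies cleanly to the single scalar $\lln (t-u)^{-\al}-(s-u)^{-\al}\rrn$. Everything else is a one-line application of the mean value theorem and monotonicity of $r\mapsto r^{-\al}$.
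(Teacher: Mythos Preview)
Your argument is correct: the $\be=0$ and $\be=1$ endpoint bounds are exactly as you state, and the interpolation via $\min(x,y)\le x^{1-\be}y^{\be}$ yields the claim with the explicit constant $c_\be=\al^\be$. The paper does not actually supply a proof of this lemma (it is merely quoted as an ``elementary estimate''), so your approach fills in precisely what the paper omits.
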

Then our regularity result is the following:
\begin{proposition}\label{prop:4.5}
Under the same assumptions as in Lemma \ref{lem:4.3}, set $z_t=I_{0t}$ for all $t\in[0,T]$. Then, for any $T_0\le T$, the path $z$ is an element of $\cac_1^\ka([0,T_0])$, and the following estimate holds true:
$$
\cn[z;\cac_1^\ka([0,T_0])] \leq c_{\psi,x} T_0^{\ga-\al-\ka}  \lcl 1+\cn[y;\cac_1^\ka([0,T_0])]\rcl.
$$
\end{proposition}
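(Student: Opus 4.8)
The plan is to bound $z_t - z_s = (\delta z)_{st}$ for $0\le s<t\le T_0$ and produce a factor $|t-s|^\ka$ with the right constant in front. Just as in the regular Young case (cf. the decomposition \eqref{decomposition-generale}), I would split
$(\delta z)_{st} = I_{st}^1 + I_{st}^2$ with
$I_{st}^1 = \int_s^t (t-u)^{-\al}\psi(y_u)\,dx_u$ and
$I_{st}^2 = \int_0^s \lc (t-u)^{-\al}-(s-u)^{-\al}\rc\psi(y_u)\,dx_u$,
both integrals being defined in the singular Young sense of Lemma \ref{lem:4.3} (for $I^2$ the integrand is not singular on $[0,s]$, so this is an ordinary Young integral covered by Proposition \ref{prop:3.1}). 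For $I^1$ I would reuse the integration-by-parts representation derived in the proof of Lemma \ref{lem:4.3}: write $I_{st}^1 = \psi(y_s)(x_t-x_s) + \int_s^t d(\psi(y_u))\,\vp_u^{(s,t)} + \al\int_s^t (t-u)^{-\al-1}\psi(y_u)(x_t-x_u)\,du$, where $\vp^{(s,t)}_u = (x_t-x_u)(t-u)^{-\al}$ extended by $0$ at $u=t$. The first term is clearly bounded by $c_\psi\,\cn[x;\cac_1^\ga]\,|t-s|^\ga$; since we proved $\vp^{(s,t)}\in\cac_1^{\ga-\al}([s,t])$ with Hölder norm $\le c\,\cn[x;\cac_1^\ga]$ and $\psi(y_\cdot)\in\cac_1^\ka$ with $\cn[\psi(y);\cac_1^\ka([s,t])]\le \norm{\psi'}_\infty\cn[y;\cac_1^\ka]$, Proposition \ref{prop:3.1} gives the middle term a bound of the form $c_{\psi,x}(1+\cn[y;\cac_1^\ka])|t-s|^{\ga-\al}$ (using $|t-s|\le T_0$ to absorb the $\cac_1^0$-term into a power of $T_0$); the last term is bounded by $c_{\psi,x}\cn[x;\cac_1^\ga]\int_s^t (t-u)^{\ga-\al-1}du = c_{\psi,x}\,|t-s|^{\ga-\al}$. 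Since $\ga-\al>\ka$ (because $(\ga-\al)+\ka>1$ forces $\ga-\al>1-\ka$, and $\ka<1$, but more directly $\ga-\al\ge\ka$ will follow from the hypotheses — if not, one simply uses $|t-s|^{\ga-\al}\le T_0^{\ga-\al-\ka}|t-s|^\ka$), altogether $\norm{I^1_{st}}\le c_{\psi,x}\,T_0^{\ga-\al-\ka}(1+\cn[y;\cac_1^\ka])\,|t-s|^\ka$.

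For $I^2_{st}$, apply Proposition \ref{prop:3.1} on $[0,s]$ with integrand $w_u := [(t-u)^{-\al}-(s-u)^{-\al}]\psi(y_u)$. I need both $\cn[w;\cac_1^0([0,s])]$ and $\cn[w;\cac_1^\rho([0,s])]$ for a suitable $\rho$ with $\rho+\ga>1$. For the sup-norm, estimate \eqref{estimation-f-s-t} with a well-chosen $\be$ gives $\lln (t-u)^{-\al}-(s-u)^{-\al}\rrn \le c_\be (s-u)^{-\al-\be}|t-s|^\be$; integrability issues do not arise here since we only need the supremum, and choosing $\be$ small one obtains $\cn[w;\cac_1^0([0,s])]\le c_\psi\, s^{-\al-\be}|t-s|^\be$ — but this blows up near $u=s$, so instead I will not bound $w$ uniformly but rather feed the whole of $I^2$ through the estimate of Proposition \ref{prop:3.1} after first bounding $\norm{I^2_{st}}$ directly: write $I^2_{st} = w_0(\delta x)_{0s} - \Lambda_{0s}(\delta w\,\delta x)$, so $\norm{I^2_{st}}\le \norm{w_0}\,\cn[x;\cac_1^\ga]s^\ga + c\,\cn[w;\cac_1^\mu([0,s])]\,\cn[x;\cac_1^\ga]\,s^{\mu+\ga}$ for $\mu+\ga>1$. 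The increment $\norm{w_v - w_u}$ for $u<v\le s$ splits, as usual, into $\lln (t-v)^{-\al}-(s-v)^{-\al} - [(t-u)^{-\al}-(s-u)^{-\al}]\rrn\,\norm{\psi}_\infty$ plus $\lln (t-v)^{-\al}-(s-v)^{-\al}\rrn\,\norm{\psi'}_\infty\cn[y;\cac_1^\ka]|v-u|^\ka$. For the second piece \eqref{estimation-f-s-t} gives $(s-v)^{-\al-\be}|t-s|^\be|v-u|^\ka$; for the first piece one uses the mean value theorem on $r\mapsto (r-u)^{-\al}-(r-v)^{-\al}$ over $r\in[s,t]$, producing $|t-s|\cdot\al\,\xi^{-\al-1}$ with $\xi\in[s-v,\,t-u]$, which is $\le c\,|t-s|(s-v)^{-\al-1}$; interpolating this against the crude bound $(s-v)^{-\al}$ (from \eqref{estimation-f-s-t} with $\be=0$) gives $c\,|t-s|^\be(s-v)^{-\al-\be}$ for any $\be\in[0,1]$. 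Hence $\cn[w;\cac_1^\mu([0,s])]$ with the weight $(s-v)^{-\al-\be}$ — i.e. what one really controls is $|w_v-w_u|\le c_{\psi,x}(1+\cn[y;\cac_1^\ka])|t-s|^\be (s\wedge\ldots)$; at this point one integrates the singular weight $(s-u)^{-\al-\be}$ over $u$ in the explicit $\Lambda$-bound, which is finite precisely when $\al+\be<1$. Choosing $\be = \ga-\al$ (legitimate since $0<\ga-\al<\ga\le 1$ and then $\al+\be=\ga<1$) makes $|t-s|^\be = |t-s|^{\ga-\al}$ appear, with the leftover $s$-powers combining to $s^{\ga-\al-\ka}$, and a further use of $|t-s|^{\ga-\al}\le T_0^{\ga-\al-\ka}|t-s|^\ka$ yields $\norm{I^2_{st}}\le c_{\psi,x}\,T_0^{\ga-\al-\ka}(1+\cn[y;\cac_1^\ka([0,T_0])])\,|t-s|^\ka$.

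Combining the two bounds gives $\cn[z;\cac_1^\ka([0,T_0])]\le c_{\psi,x}\,T_0^{\ga-\al-\ka}(1+\cn[y;\cac_1^\ka([0,T_0])])$, as claimed. The main obstacle is the term $I^2$: unlike the regular case, the weight $(t-u)^{-\al}-(s-u)^{-\al}$ is itself singular at $u=s$, so one cannot simply bound the integrand in $\cac_1^\ga$ and invoke Proposition \ref{prop:3.1} as a black box; the care lies in choosing the interpolation exponent $\be$ in \eqref{estimation-f-s-t} so that simultaneously (a) the power of $|t-s|$ is at least $\ka$ (after using $T_0\le T$), and (b) the singular weight $(s-u)^{-\al-\be}$ remains Lebesgue-integrable against $u$ in the explicit series defining $\Lambda$. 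The choice $\be=\ga-\al$ threads this needle exactly, and it is the hypothesis $(\ga-\al)+\ka>1$ together with $\al<\ga$ that makes it work.
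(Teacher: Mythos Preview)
Your handling of $I^1_{st}$ via the integration-by-parts representation from the proof of Lemma~\ref{lem:4.3} is a valid alternative to the paper's dyadic-sum estimate (minor slip: the boundary term is $\frac{\psi(y_s)}{(t-s)^\al}(x_t-x_s)$, not $\psi(y_s)(x_t-x_s)$, so its bound is $|t-s|^{\ga-\al}$ rather than $|t-s|^\ga$; this is harmless).

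The genuine gap is in $I^2_{st}$. You first assert the integrand is not singular on $[0,s]$, but this is false: $(s-u)^{-\al}\to\infty$ as $u\to s^-$, so $w_u=[(t-u)^{-\al}-(s-u)^{-\al}]\psi(y_u)$ is unbounded and lies in no H\"older space $\cac_1^\mu([0,s])$. Hence Proposition~\ref{prop:3.1} does not apply, and the formula $I^2_{st}=w_0(\delta x)_{0s}-\Lambda_{0s}(\delta w\,\delta x)$ is unavailable: $\delta w\,\delta x$ does not belong to any $\cz\cac_3^\mu$, so $\Lambda$ is not even defined on it. Your attempted rescue---``integrating the singular weight $(s-u)^{-\al-\be}$ over $u$ in the explicit $\Lambda$-bound''---has no counterpart in the sewing map as set up here: estimate~\eqref{contraction} is a supremum bound, not an integral, and nothing in Theorem~\ref{prop:Lambda} accommodates integrands with pointwise blow-up. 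This is precisely why the paper abandons $\Lambda$ for this proposition and controls both pieces by bounding dyadic Riemann sums $J_n$ directly, estimating $\|J_{n+1}-J_n\|$. For $II_{st}$ the key device is the monotonicity of $u\mapsto f_{s,t}(u)$, which collapses the sum of its increments over the partition to a single boundary value (see~\eqref{sommef}), then controlled via~\eqref{estimation-f-s-t}; the resulting growth in $n$ is beaten by the decay coming from $(\delta x)$, and the series converges. If by ``the explicit series defining $\Lambda$'' you meant this kind of direct dyadic computation, you are morally on the right track, but the argument has to be carried out by hand---it cannot be packaged as a call to Proposition~\ref{prop:3.1} or Theorem~\ref{prop:Lambda}.
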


\begin{proof}
We rely on the decomposition $(\der z)_{st}=I_{st}+II_{st}$, with
\begin{eqnarray} \label{decompo}
I_{st}=\int_s^t (t-u)^{-\al} \psi(y_u)dx_u \quad \mbox{and} \quad II_{st}=\int_0^s \lc(t-u)^{-\al}-(s-u)^{-\al}\rc\psi(y_u) \, dx_u.
\end{eqnarray}
Notice that the term $I$ is exactly the one introduced at Lemma \ref{lem:4.3}. Let us now bound each of these terms.

\smallskip

\noindent
\textit{Case of $I$:} It is easily seen that $I$ can also be obtained thanks to the following approximation sequence: for $n\ge 1$, set
$$J_n =\sum_{i=0}^{2^n-1} (t-s_n^i)^{-\al} \psi(y_{s_n^i}) (\der x)_{s_n^i,s_n^{i+1}}, \quad \mbox{where} \ s_n^i=s+\frac{i(t-s)}{2^n} .$$
Then $I_{st}$ is obtained as $\lim_{n\to\infty}J_n$. Moreover, it is readily checked that
\begin{eqnarray}
J_{n+1}-J_n &=& \sum_{i=0}^{2^n-1} \lc (t-s_{n+1}^{2i+1})^{-\al}\psi(y_{s_{n+1}^{2i+1}})-(t-s_{n+1}^{2i})^{-\al}\psi(y_{s_{n+1}^{2i}})\rc(\der x)_{s_{n+1}^{2i+1},s_{n+1}^{2+2}} \nonumber\\
&=& \sum_{i=0}^{2^n-1} \lc (t-s_{n+1}^{2i+1})^{-\al}-(t-s_{n+1}^{2i})^{-\al}\rc\psi(y_{s_{n+1}^{2i+1}})(\der x)_{s_{n+1}^{2i+1},s_{n+1}^{2i+2}} \nonumber\\
& & \hspace{3cm} +\sum_{i=0}^{2^n-1} (t-s_{n+1}^{2i})^{-\al}\lc\psi(y_{s_{n+1}^{2i+1}})-\psi(y_{s_{n+1}^{2i}})\rc(\der x)_{s_{n+1}^{2i+1},s_{n+1}^{2i+2}} \nonumber\\
&:=& A+B. \label{decompojn}
\end{eqnarray}

But
$$\norm{ A } \leq \|\psi\|_\infty \cn[x;\cac_1^\ga] \frac{\lln t-s \rrn^\ga}{(2^{n+1})^\ga} \sum_{i=0}^{2^n-1} \lln (t-s_{n+1}^{2i+1})^{-\al}-(t-s_{n+1}^{2i})^{-\al} \rrn ,$$
and a telescopic sum kind of argument shows that
\begin{align}\label{majo-somme-f-t}
&\sum_{i=0}^{2^n-1} \lln (t-s_{n+1}^{2i+1})^{-\al}-(t-s_{n+1}^{2i})^{-\al} \rrn \\
&= (t-s)^{-\al} \sum_{i=0}^{2^n-1} \lcl \lp 1-\frac{2i+1}{2^{n+1}} \rp^{-\al}-\lp 1-\frac{2i}{2^{n+1}} \rp^{-\al} \rcl  \nonumber\\
&\leq  (t-s)^{-\al} \lp 1-\frac{2^{n+1}-1}{2^{n+1}} \rp^{-\al} \ \leq \ (t-s)^{-\al} (2^{n+1})^\al. \nonumber
\end{align}
Hence
\begin{eqnarray}\label{termeA}
\norm{ A } \leq c_{\psi,x} \lln t-s \rrn^{\ga-\al} \lp \frac{1}{2^{\ga-\al}} \rp^{n+1} \leq c_{\psi,x} \lln t-s \rrn^{\ka} T_0^{\ga-\al-\ka}\lp \frac{1}{2^{\ga-\al}} \rp^{n+1}.
\end{eqnarray}
As for $B$, the following bound holds true:
$$\norm{ B } \leq \lp \sum_{i=0}^{2^n-1} (t-s_{n+1}^{2i})^{-\al} \rp \|\psi'\|_\infty \cn[y;\cac_1^\ka] \frac{\lln t-s\rrn^{\ka}}{(2^{n+1})^\ka} \cn[x;\cac_1^\ga] \frac{\lln t-s\rrn^\ga}{(2^{n+1})^\ga} ,$$
with
\begin{multline*}
\sum_{i=0}^{2^n-1} (t-s_{n+1}^{2i})^{-\al}=(t-s)^{-\al} \sum_{i=0}^{2^n-1} \lp 1-\frac{2i}{2^{n+1}} \rp^{-\al} 
\leq \frac{2^{n+1}}{(t-s)^{\al}} \int_0^1 \frac{du}{(1-u)^\al} \\
\leq \frac{2^{n+1}}{1-\al} (t-s)^{-\al},
\end{multline*}
and accordingly
\begin{eqnarray}
\norm{ B } &\leq & c_{\psi,x} \cn[y;\cac_1^\ka] \lln t-s\rrn^{\ka+\ga-\al}   \lp \frac{1}{2^{\ka+\ga-1}} \rp^{n+1}  \label{termeB}\\
&\leq & c_{\psi,x} \lln t-s \rrn^\ka \cn[y;\cac_1^\ka] T_0^{\ga-\al}\lp \frac{1}{2^{\ka+\ga-1}} \rp^{n+1}   .\nonumber
\end{eqnarray}

Going back to (\ref{decompojn}) and putting together our estimates for $A$ and $B$, we get
$$\norm{J_{n+1}-J_n } \leq T_0^{\ga-\al-\ka} \lln t-s \rrn^\ka \lcl 1+\cn[y;\cac_1^\ka]\rcl \, v_n,$$
where $v_n$ is the general term of a converging series. Now, write
$J_N=J_0+\sum_{n=0}^{N-1} (J_{n+1}-J_n)$,
so that, by letting $n$ tend to infinity, we obtain
$$\left\Vert \int_s^t (t-u)^{-\al} \psi(y_u)dx_u \right\Vert \leq \norm{ J_0}  +T_0^{\ga-\al-\ka} \lln t-s \rrn^\ka \lcl 1+\cn[y;\cac_1^\ka]\rcl .$$

It only remains to notice that
\begin{eqnarray} \label{j0}
\norm{ J_0} =\norm{ (t-s)^{-\al} \psi(y_s)(\der x)_{st}} \leq \|\psi\|_\infty \cn[x;\cac_1^\ga] \lln t-s\rrn^{\ga-\al} \leq c_{\psi,x} \lln t-s\rrn^\ka T_0^{\ga-\al-\ka}
\end{eqnarray}
to conclude
$$\norm{I_{st}} \leq T_0^{\ga-\al-\ka} \lln t-s \rrn^\ka \lcl 1+\cn[y;\cac_1^\ka]\rcl.$$

\smallskip

\noindent
\textit{Case of $II$:} We use the same strategy as for $I$, with this time $s_n^i=\frac{is}{2^n}$ and 
$$  J_n=\sum_{i=0}^{2^n-1} f_{s,t}(s_n^i) \psi(y_{s_n^i})  (\der x)_{s_n^i,s_n^{i+1}}, \quad \mbox{where} \ f_{s,t}(u)=\lc(t-u)^{-\al}-(s-u)^{-\al}\rc .
$$
Then
\begin{eqnarray}
J_{n+1}-J_n& = & \sum_{i=0}^{2^n-1} \lcl f_{s,t}(s_{n+1}^{2i+1}) \psi(y_{s_{n+1}^{2i+1}})-f_{s,t}(s_{n+1}^{2i})\psi(y_{s_{n+1}^{2i}}) \rcl (\der x)_{s_{n+1}^{2i+1},s_{n+1}^{2i+2}} \nonumber\\
&=& \sum_{i=0}^{2^n-1}  \lcl f_{s,t}(s_{n+1}^{2i+1})-f_{s,t}(s_{n+1}^{2i}) \rcl \psi(y_{s_{n+1}^{2i+1}})(\der x)_{s_{n+1}^{2i+1},s_{n+1}^{2i+2}} \nonumber\\
& & \hspace{3cm} +\sum_{i=0}^{2^n-1} f_{s,t}(s_{n+1}^{2i}) \lcl \psi(y_{s_{n+1}^{2i+1}})-\psi(y_{s_{n+1}^{2i}}) \rcl (\der x)_{s_{n+1}^{2i+1},s_{n+1}^{2i+2}} \nonumber\\
&:=& D+E. \label{decompojn2}
\end{eqnarray}

\smallskip

To deal with $D$, notice that $u\mapsto f_{s,t}(u)$ is a decreasing function on $[0,s]$, and hence
\begin{equation}\label{sommef}
\sum_{i=0}^{2^n-1} \lln f_{s,t}(s_{n+1}^{2i+1})-f_{s,t}(s_{n+1}^{2i}) \rrn
\leq \sum_{i=0}^{2^{n+1}-1}  \lln f_{s,t}(s_{n+1}^{i+1})-f_{s,t}(s_{n+1}^{i})\rrn \leq \left| f_{s,t}\lp \frac{2^{n+1}-1}{2^{n+1}}s \rp \right|.
\end{equation}
Furthermore, according to our elementary bound (\ref{estimation-f-s-t}) applied with $\be=\ka$, we have $|f_{s,t}\lp \frac{2^{n+1}-1}{2^{n+1}}s \rp | \leq \frac{c}{s^{\al+\ka}} \lln t-s\rrn^\ka (2^{\al+\ka})^{n+1}$, so that
\begin{eqnarray} 
\norm{ D } &\leq& c \|\psi\|_\infty \cn[x;\cac_1^\ga] s^{\ga-\al-\ka} \lln t-s \rrn ^\ka \lp \frac{1}{2^{\ga-\al-\ka}} \rp^{n+1} \nonumber\\
& \leq & c_{\psi,x} \, T_0^{\ga-\al-\ka} \lln t-s \rrn ^\ka \lp \frac{1}{2^{\ga-\al-\ka}} \rp^{n+1}.\label{termeAbis}
\end{eqnarray}
As far as $E$ is concerned, use (\ref{estimation-f-s-t}) with $\be=\ga-\al$ to deduce
\begin{eqnarray}
\norm{ E } &\leq & c\,  \|\psi'\|_\infty \cn[y;\cac_1^\ka] \cn[x;\cac_1^\ga] s^{\ka}\lln t-s \rrn^{\ga-\al} \lp \frac{1}{2^{\ka+\ga}} \rp^{n+1} \sum_{i=0}^{2^n-1} \lp 1-\frac{2i}{2^{n+1}} \rp^{-\ga}  \nonumber\\
& \leq & c_{\psi,x} \cn[y;\cac_1^\ka]  s^{\ka} \lln t-s \rrn^{\ga-\al} \lp \frac{1}{2^{\ka+\ga-1}} \rp^{n+1} \int_0^1 \frac{dx}{(1-x)^{\ga}} \nonumber \\
&\leq &  c_{\psi,x} \cn[y;\cac_1^\ka] \lln t-s \rrn^\ka \lln t-s \rrn^{\ga-\al-\ka} T_0^{\ka} \lp \frac{1}{2^{\ka+\ga-1}}\rp^{n+1} ,\label{terme-E}
\end{eqnarray}
hence
\begin{eqnarray*} 
\norm{ E }  \leq c_{\psi,x} \cn[y;\cac_1^\ka] T_0^{\ga-\al} \lln t-s\rrn^\ka \lp \frac{1}{2^{\ka+\ga-1}} \rp^{n+1}.
\end{eqnarray*}

\smallskip

Just as for $I$, gathering our bounds on $D$ and $E$, we can then assert that
$$ \norm{ \int_0^s \lc(t-u)^{-\al}-(s-u)^{-\al}\rc\psi(y_u)\, dx_u } \leq \norm{ J_0 } +c_{\psi,x}T_0^{\ga-\al-\ka} \lln t-s \rrn^\ka \lcl 1+\cn[y;\cac_1^\ka]\rcl.$$
Since $\lln t^{-\al}-s^{-\al} \rrn \leq c \, s^{-\al-\ka} \lln t-s \rrn^\ka$, the term $J_0$ above can be estimated as:
\begin{eqnarray} \label{j02}
\norm{ J_0 } =\norm{ \lcl t^{-\al}-s^{-\al} \rcl (\der x)_{0s} } \leq  \cn[x;\cac_1^\ga] s^{\ga-\al-\ka} \lln t-s \rrn^\ka,
\end{eqnarray}
so that 
$$\norm{II_{st}}=\norm{ \int_0^s \lc(t-u)^{-\al}-(s-u)^{-\al}\rc \psi(y_u)\, dx_u } \leq c_{\psi,x}T_0^{\ga-\al-\ka} \lln t-s \rrn^\ka \lcl 1+\cn[y;\cac_1^\ka]\rcl.$$

\smallskip

Finally, going back to decomposition (\ref{decompo}), our bounds on $I$ and $II$ yield 
$$\cn[z;\cac_1^\ka] \leq c_{\psi,x}T_0^{\ga-\al-\ka}  (1+\cn[y;\cac_1^\ka]),$$
which was the announced result.

\end{proof}

\subsection{Solving Volterra equations}
Thanks to the considerations of the last section, we can now interpret equation (\ref{eq:volterra-singular}), and especially its integral term, in the sense  given by Lemma \ref{lem:4.3} and Proposition \ref{prop:4.5}. We are now in position to state the main result of this section:
\begin{theorem}\label{theo:singular-case}
Assume that $x \in \cac_1^\ga([0,T];\R^n)$ for some $\ga \in (1/2,1)$, let $\psi$ be a function in  $\cac^{1,\textbf{\textit{b}}}(\R^d; \R^{d,n})$, and $\al \in (0,1/2)$ such that $\ga-\al > 1/2$. Then, for any $\ka \in (1-(\ga-\al); \ga-\al)$, equation (\ref{eq:volterra-singular}) admits a unique solution in $\cac_1^\ka([0,T]; \R^d)$.
\end{theorem}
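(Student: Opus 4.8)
The plan is to follow verbatim the two--step scheme used for Theorem~\ref{theo:young-case}: a Banach fixed--point argument producing a local solution on a small interval $[0,T_0]$, followed by an iterated patching procedure extending it to $[0,T]$. Write $\cy_u=(u,y_u)$ and, for $y\in\cac_1^\ka([0,T_0];\R^d)$ with $y_0=a$, define $\Gamma(y)_t=a+\int_0^t(t-u)^{-\al}\psi(y_u)\,dx_u$, the integral being understood in the sense of Lemma~\ref{lem:4.3}. A fixed point of $\Gamma$ is exactly a solution of~(\ref{eq:volterra-singular}), so everything reduces to turning $\Gamma$ into a strict contraction on a stable ball of $\cac_1^\ka([0,T_0];\R^d)$ for $T_0$ small.

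Invariance of a ball is immediate from Proposition~\ref{prop:4.5}: since $\Gamma(y)_0=a$ and $(\der\Gamma(y))_{st}=(\der z)_{st}$ with $z_t=\int_0^t(t-u)^{-\al}\psi(y_u)\,dx_u$, that proposition gives $\cn[\Gamma(y);\cac_1^\ka([0,T_0])]\le c_{\psi,x}\,T_0^{\ga-\al-\ka}\,(1+\cn[y;\cac_1^\ka([0,T_0])])$, and since $\ga-\al-\ka>0$ under our hypotheses one picks $A$ large, then $T_0$ small, so that $c_{\psi,x}T_0^{\ga-\al-\ka}(1+A)\le A$; the ball $\{y:\ y_0=a,\ \cn[y;\cac_1^\ka([0,T_0])]\le A\}$ is then invariant by $\Gamma$. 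For the contraction, given $y,\yti$ in this ball (hence $y_0=\yti_0=a$), we decompose $(\der(\Gamma(y)-\Gamma(\yti)))_{st}=I_{st}+II_{st}$ with $I_{st}=\int_s^t(t-u)^{-\al}[\psi(y_u)-\psi(\yti_u)]\,dx_u$ and $II_{st}=\int_0^s[(t-u)^{-\al}-(s-u)^{-\al}][\psi(y_u)-\psi(\yti_u)]\,dx_u$, and re--run the Riemann--sum analysis of the proof of Proposition~\ref{prop:4.5} with $\psi(y)-\psi(\yti)$ in place of $\psi(y)$. The required estimates on $\psi(y)-\psi(\yti)$ and on $[(t-\cdot)^{-\al}-(s-\cdot)^{-\al}][\psi(y)-\psi(\yti)]$ --- the singular analogues of~(\ref{inegalite-sigma-1})--(\ref{inegalite-sigma-3}) --- follow from the elementary bound~(\ref{estimation-f-s-t}) together with the regularity of $\psi$, exactly as in the Appendix, using $y_0=\yti_0$ to turn sup norms of $y-\yti$ into $\ka$--H\"older norms times a positive power of $T_0$. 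This yields $\cn[\Gamma(y)-\Gamma(\yti);\cac_1^\ka([0,T_0])]\le c_{\psi,x}\,T_0^{\ga-\al-\ka}\,(1+A)\,\cn[y-\yti;\cac_1^\ka([0,T_0])]$, so for $T_0$ small enough $\Gamma$ is a strict contraction and has a unique fixed point $y^{(1)}$.

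It remains to extend $y^{(1)}$ to $[0,T]$ by the iteration argument of Proposition~\ref{extension-cas-young} and Lemma~\ref{lem:stable-ball}: one must find a small $\ep>0$ \emph{independent of $y^{(1)}$} such that the map $\hz_t=y^{(1)}_t$ for $t\le T_0$, $\hz_t=a+\int_0^t(t-u)^{-\al}\psi(z_u)\,dx_u$ for $t\in[T_0,T_0+\ep]$, restricted to functions $z$ agreeing with $y^{(1)}$ on $[0,T_0]$, is a contraction on a stable ball. Splitting the three ranges ($s,t\le T_0$; $s,t\in[T_0,T_0+\ep]$; $s\le T_0\le t$) exactly as in Proposition~\ref{extension-cas-young}, the genuinely delicate contribution on $[T_0,T_0+\ep]$ is the memory integral $\int_0^s[(t-u)^{-\al}-(s-u)^{-\al}]\psi(z_u)\,dx_u$, the term typical of the Volterra setting; the integral over $[s,t]$ is handled by the ``Case of $I$'' computation of Proposition~\ref{prop:4.5} and contributes a prefactor $\ep^{\ga-\al-\ka}$. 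The main obstacle is precisely to control this memory integral uniformly: just as $\si(t,\cdot,\cdot)-\si(s,\cdot,\cdot)=O(|t-s|)$ produced the gain $\ep^{1-\ga}$ in the non--singular case, one applies~(\ref{estimation-f-s-t}) with an exponent $\be$ slightly below $1-\al$ (legitimate since $\al+\ka<1-\al+\ka<1$) so that the ``Case of $II$'' computation of Proposition~\ref{prop:4.5} bounds this term by $\ep^{\theta}\,|t-s|^\ka\,(1+\cn[z;\cac_1^\ka([0,T_0+\ep])])$ for some $\theta>0$; then $\ep$ may be fixed once and for all. With invariance and the contraction established on $[0,T_0+\ep]$, the same $\ep$ works at every step, finitely many iterations cover the finite interval $[0,T]$, and uniqueness on $[0,T]$ follows from the same local argument run along the patching. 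The only truly technical point is this bookkeeping of the singular Riemann sums in the contraction and in the memory estimate --- which, as the remark following Theorem~\ref{theo:singular-case} warns, is exactly what the $\Lambda$--calculus spares us when it applies.
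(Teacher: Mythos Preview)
Your overall architecture is exactly that of the paper: a Banach fixed point for local existence (invariance straight from Proposition~\ref{prop:4.5}, contraction by re-running the dyadic Riemann-sum analysis with $\psi(y)-\psi(\yti)$ in place of $\psi(y)$), followed by the patching scheme of Proposition~\ref{extension-cas-young}. The decomposition $(\der(\Gamma(y)-\Gamma(\yti)))_{st}=III_{st}+IV_{st}$ and the identification of the memory term $IV$ as the delicate one are also precisely what the paper does.

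There is, however, one concrete technical slip in your handling of the memory term. You propose to apply~(\ref{estimation-f-s-t}) with $\be$ ``slightly below $1-\al$'', and your parenthetical justification $\al+\ka<1-\al+\ka<1$ is neither relevant nor correct (the second inequality would require $\ka<\al$, which is not assumed). The real constraint on $\be$ comes from the dyadic series: after using~(\ref{sommef}) and~(\ref{estimation-f-s-t}), the $D$-type term (and its contraction analogue $H$) carries a geometric factor $(2^{-(\ga-\al-\be)})^{n+1}$, which is summable only if $\be<\ga-\al$. Since $\ga<1$ one has $1-\al>\ga-\al$, so your choice $\be\approx 1-\al$ makes the series diverge. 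The paper's fix is to introduce an auxiliary exponent $\la\in(\ka,\ga-\al)$ and apply~(\ref{estimation-f-s-t}) with $\be=\la$ in the $H$-term: the condition $\la<\ga-\al$ guarantees summability, while $\la>\ka$ delivers the needed gain $|t-s|^{\la-\ka}\le\eta^{\la-\ka}$ for the contraction on the extended interval (this is exactly~(\ref{relation3})). With this single correction your outline goes through and matches the paper's proof.
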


Fix $\ka \in (1-(\ga-\al),\ga-\al)$. As in Section \ref{sec:young-case}, we shall solve our equation by identifying its solution with the fixed point of the map $\Gamma$ defined, for any $y\in \cac_1^\ka([0,T];\R^d)$, by 
\beq\label{eq:34}
z_t=\Gamma(y)_t=a+\int_0^t (t-u)^{-\al} \psi(y_u) \, dx_u.
\eeq
We divide again our proof into two propositions, dealing respectively with local and global existence and uniqueness for the solution.

\smallskip

\begin{proposition}[Local existence]
Under the hypothesis of Theorem \ref{theo:singular-case}, there exists $T_0 \in (0,T]$ such that Equation (\ref{eq:volterra-singular}) admits a unique solution $\yun$ in $\cac_1^\ka([0,T_0]; \R^d)$.
\end{proposition}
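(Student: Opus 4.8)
The plan is to reproduce the fixed-point scheme of Proposition~\ref{prop:3.6}, now based on the singular estimate of Proposition~\ref{prop:4.5} instead of~(\ref{inegalite-integrale-volterra}). Fix $\ka\in(1-(\ga-\al),\ga-\al)$, and for $T_0\in(0,T]$ to be chosen, associate to each $y$ in the affine set $E_{T_0}=\{y\in\cac_1^\ka([0,T_0];\R^d):y_0=a\}$ the path $z=\Gamma(y)$ defined by~(\ref{eq:34}), $\Gamma$ being adapted to $[0,T_0]$. By Lemma~\ref{lem:4.3} the integral in~(\ref{eq:34}) is well defined since $(\ga-\al)+\ka>1$, and $\Gamma(y)_0=a$, so that a solution of~(\ref{eq:volterra-singular}) on $[0,T_0]$ is exactly a fixed point of $\Gamma$ in $E_{T_0}$. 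I would look for it by a Banach fixed-point argument, $E_{T_0}$ being endowed with its natural complete distance $d(y,\yti)=\cn[y-\yti;\cac_1^\ka([0,T_0])]$ (a bona fide metric, since $d(y,\yti)=0$ together with $y_0=\yti_0$ forces $y=\yti$).

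The invariance of a ball is the easy part and follows directly from Proposition~\ref{prop:4.5}: as $\cn[\cdot;\cac_1^\ka]$ only sees increments, $\cn[\Gamma(y);\cac_1^\ka([0,T_0])]=\cn[z;\cac_1^\ka([0,T_0])]\le c_{\psi,x}\,T_0^{\ga-\al-\ka}\bigl(1+\cn[y;\cac_1^\ka([0,T_0])]\bigr)$. Since $\ka<\ga-\al$, the exponent $\ga-\al-\ka$ is strictly positive, so one may pick $T_0$ small enough that $c_{\psi,x}T_0^{\ga-\al-\ka}\le\frac12$; then the ball $B=\{y\in E_{T_0}:\cn[y;\cac_1^\ka([0,T_0])]\le 1\}$ is stable under $\Gamma$. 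The same computation shows moreover that any solution of~(\ref{eq:volterra-singular}) in $\cac_1^\ka([0,T_0])$ automatically lies in $B$, so uniqueness in $B$ yields uniqueness in $\cac_1^\ka([0,T_0])$.

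For the contraction step, observe that $\Gamma$ is affine in $\psi(y_{\cdot})$, so $\Gamma(y)-\Gamma(\yti)$ is the singular integral $\int_0^{\cdot}(t-u)^{-\al}w_u\,dx_u$ of $w_u:=\psi(y_u)-\psi(\yti_u)$, which I would analyse through the decomposition~(\ref{decompo}) and the dyadic approximations used in Proposition~\ref{prop:4.5}. That proof uses only two features of the integrand $\psi(y_{\cdot})$: a uniform bound (for the telescopic terms and for the boundary term $J_0$) and a $\ka$-Hölder bound (for the terms $B$, $E$); running it verbatim with $w$ in place of $\psi(y_{\cdot})$ gives $\cn[\Gamma(y)-\Gamma(\yti);\cac_1^\ka([0,T_0])]\le c_{\psi,x}T_0^{\ga-\al-\ka}\bigl(\norm{w}_\infty+\cn[w;\cac_1^\ka([0,T_0])]\bigr)$. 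The uniform norm already carries a small factor, because $y_0=\yti_0$ yields $\norm{w}_\infty\le\norm{\psi'}_\infty\norm{y-\yti}_\infty\le\norm{\psi'}_\infty T_0^\ka\,\cn[y-\yti;\cac_1^\ka([0,T_0])]$. Granting a companion estimate controlling $\cn[w;\cac_1^\ka([0,T_0])]$ by $c\bigl(1+\cn[y;\cac_1^\ka]+\cn[\yti;\cac_1^\ka]\bigr)$ times a factor tending to $0$ with $T_0$ times $\cn[y-\yti;\cac_1^\ka([0,T_0])]$, one gets a Lipschitz constant of the form $c_{\psi,x}T_0^{\ga-\al-\ka}\varphi(T_0)$ for $\Gamma$ on $B$ with $\varphi(T_0)\to0$, hence a strict contraction for $T_0$ small and a unique fixed point $\yun$ in $B$.

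The main obstacle, as I see it, is precisely this companion estimate on $\cn[w;\cac_1^\ka([0,T_0])]$. By the mean value theorem, $(\der w)_{uv}=\int_0^1\psi'(\yti_u+r(y_u-\yti_u))\,dr\,(\der(y-\yti))_{uv}+R_{uv}$, where $R_{uv}$ compares $\psi'$ at two points lying at distance $O(\norm{y-\yti}_\infty)$; since $\psi$ is merely $\cac^{1,\textbf{\textit{b}}}$, this yields only a modulus of continuity and no Hölder gain. I would deal with it either (a) by retaining the uniform continuity modulus $\omega$ of $\psi'$ on the fixed bounded set swept by the paths of $B$, getting $\cn[w;\cac_1^\ka]\le\norm{\psi'}_\infty\cn[y-\yti;\cac_1^\ka]+c\,\omega\bigl(T_0^\ka\cn[y-\yti;\cac_1^\ka]\bigr)$ and then closing the argument by an Osgood-type estimate together with the localisation at $\sup\{t:y\equiv\yti\text{ on }[0,t]\}$; or (b) by interpolating the uniform bound on $w$ against a plain $\cac_1^\ka$ bound so as to control $\cn[w;\cac_1^{\ka'}]$ with a positive power of $\norm{y-\yti}_\infty$ for some $\ka'\in(1-(\ga-\al),\ka)$, rerunning the dyadic estimate at level $\ka'$, and carrying out the contraction in the weaker $\cac_1^{\ka'}$-metric on $B$ (still complete, by lower semicontinuity of the $\cac_1^\ka$-seminorm for $\cac_1^{\ka'}$-convergence). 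Throughout, the elementary inequality~(\ref{estimation-f-s-t}) is invoked exactly as in Proposition~\ref{prop:4.5} to handle the memory term $II$.
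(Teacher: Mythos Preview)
Your overall architecture---fixed point, invariance of a ball from Proposition~\ref{prop:4.5}, contraction via the same dyadic decomposition applied to $w=\psi(y)-\psi(\yti)$---is exactly the paper's. The paper also reduces the contraction step to the two pieces $III_{st}$ and $IV_{st}$ of~(\ref{decompo}), re-runs the dyadic estimates, and arrives at $\cn[z-\zti;\cac_1^\ka]\le c_{\psi,x}T_0^{\ga-\al-\ka}\{1+2A_{T_0}\}\cn[y-\yti;\cac_1^\ka]$.

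Where you go astray is in the ``companion estimate'' paragraph: you ask that $\cn[w;\cac_1^\ka]$ be controlled by $c(1+\cn[y;\cac_1^\ka]+\cn[\yti;\cac_1^\ka])$ times a factor \emph{tending to $0$ with $T_0$} times $\cn[y-\yti;\cac_1^\ka]$. No such extra smallness is needed. The prefactor $T_0^{\ga-\al-\ka}$ you have already extracted does all the work: since the radius $A_{T_0}$ of the invariant ball stays bounded as $T_0\to 0$ (it solves $c\,T_0^{\ga-\al-\ka}(1+A)=A$), any bound of the form
\[
\cn[w;\cac_1^\ka]\le c_\psi\bigl(1+\cn[y;\cac_1^\ka]+\cn[\yti;\cac_1^\ka]\bigr)\,\cn[y-\yti;\cac_1^\ka]
\]
with a \emph{fixed} constant $c_\psi$ already yields a Lipschitz constant $c_{\psi,x}T_0^{\ga-\al-\ka}(1+2A_{T_0})\to 0$. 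This is precisely what the paper does: for the double-increment terms (labelled $G$ and $K$ there) it simply invokes~(\ref{inegalite-sigma-2}), which gives exactly the displayed inequality with no $T_0$-factor. Your proposed workarounds (Osgood, interpolation to a weaker $\ka'$) are therefore unnecessary for the contraction once you accept this.

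You are, however, right to flag the regularity issue: the proof of~(\ref{inegalite-sigma-2}) in the Appendix uses $\|D^2\si\|_\infty$, so the paper is tacitly using $D\psi$ Lipschitz (i.e.\ $\psi\in\cac^{2,\textbf{\textit{b}}}$) when it cites that inequality, even though Theorem~\ref{theo:singular-case} only states $\psi\in\cac^{1,\textbf{\textit{b}}}$. If you insist on the weaker hypothesis as written, then your alternatives (a)--(b) become relevant; but the intended proof simply uses~(\ref{inegalite-sigma-2}) directly and closes with the outer factor $T_0^{\ga-\al-\ka}$.
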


\begin{proof}
Fix a time $T_0 \in (0,T]$ and let $y\in \cac^\ka([0,T_0])$. Define then $z=\Gamma(y)$ as in equation~(\ref{eq:34}).

\smallskip

\noindent
\textit{Step 1: Invariance of a ball}. A simple application of Proposition \ref{prop:4.5} allows to conclude the existence of a stable ball
$$\cb_{a, T_0} =\lcl y \in \cac^\ka([0,T_0]), \ y_0=a, \ \cn[y;\cac_1^\ka] \leq A_{T_0} \rcl$$
for any $T_0$ small enough and $A_{T_0}$ large enough.

\smallskip

\noindent
\textit{Step 2: Contraction property}. Let $y,\yti \in \cb_{a, T_0}$, and set $z=\Gamma(y)$, $\zti=\Gamma(\yti)$. Thus, $\der(z-\zti)_{st}=III_{st}+IV_{st}$, with
\begin{eqnarray} \label{decompocontra}
III_{st}&=&\int_s^t (t-u)^{-\al} \lc \psi(y_u)-\psi(\yti_u) \rc dx_u \\ 
IV_{st}&=&\int_0^s \lc(t-u)^{-\al}-(s-u)^{-\al} \rc \lc\psi(y_u)-\psi(\yti_u)\rc dx_u. \nonumber
\end{eqnarray}
We will now estimate these two terms, according to the same strategy as for Proposition~\ref{prop:4.5}, i.e. invoking approximations by dyadic partitions.

\smallskip

\noindent
\textit{Case of $III$:} Denote
$$s_n^i=s+\frac{i(t-s)}{2^n}, \quad  J_n= \sum_{i=0}^{2^n-1} (t-s_n^i)^{-\al} \lc\psi(y_{s_n^i})-\psi(\yti_{s_n^i})\rc(\der x)_{s_{n}^{i},s_{n}^{i+1}}.$$
Then
\begin{eqnarray}
\lefteqn{J_{n+1}-J_n} \nonumber\\
&=& \sum_{i=0}^{2^n-1} \lcl \lc (t-s_{n+1}^{2i+1})^{-\al}-(t-s_{n+1}^{2i})^{-\al}\rc\lc\psi(y_{s_{n+1}^{2i+1}})-\psi(\yti_{s_{n+1}^{2i+1}}) \rc \rcl (\der x)_{s_{n+1}^{2i+1},s_{n+1}^{2i+2}} \nonumber\\
& & \hspace{0.5cm} +\sum_{i=0}^{2^n-1} \lcl (t-s_{n+1}^{2i})^{-\al} \lc \psi(y_{s_{n+1}^{2i+1}})-\psi(\yti_{s_{n+1}^{2i+1}})-\psi(y_{s_{n+1}^{2i}})+\psi(\yti_{s_{n+1}^{2i}})\rc \rcl (\der x)_{s_{n+1}^{2i+1},s_{n+1}^{2i+2}} \nonumber\\
&:=& F+G. \label{decompojn3}
\end{eqnarray}
For $F$, we have, since $(y-\yti)_0=0$,
$$
\norm{F}
\leq  \cn[x;\cac_1^\ga] \frac{\lln t-s \rrn^\ga}{(2^{n+1})^\ga} \|\psi'\|_\infty \cn[y-\yti;\cac_1^\ka] T_0^\ka \sum_{i=0}^{2^n-1}  |(t-s_{n+1}^{2i+1})^{-\al}-(t-s_{n+1}^{2i})^{-\al} |,
$$
which, thanks to (\ref{majo-somme-f-t}), gives
\begin{eqnarray}\label{relation1}
\norm{ F } 
&\leq& c_{\psi,x} \cn[y-\yti;\cac_1^\ka] \lln t-s \rrn^{\ga-\al-\ka} \lln t-s \rrn^{\ka} \lp \frac{1}{2^{\ga-\al}} \rp^{n+1} T_0^\ka.
\end{eqnarray}
As far as $G$ is concerned, use (\ref{inegalite-sigma-2}) to assert that
\begin{multline*}  
\norm{ \psi(y_{s_{n+1}^{2i+1}})-\psi(\yti_{s_{n+1}^{2i+1}})-\psi(y_{s_{n+1}^{2i}})+\psi(\yti_{s_{n+1}^{2i}}) } \\
\leq c_\psi \lcl 1+\cn[y;\cac_1^\ka]+\cn[\yti;\cac_1^\ka] \rcl \cn[y-\yti;\cac_1^\ka ] \frac{\lln t-s \rrn^\ka}{(2^{n+1})^\ka}.
\end{multline*}
Besides,
$$\sum_{i=0}^{2^n-1} (t-s_{n+1}^{2i})^{-\al} \leq \frac{2^{n+1}}{(t-s)^\al} \int_0^1 \frac{du}{(1-u)^\al} ,$$
so that
\begin{eqnarray} \label{relation2}
\norm{G } \leq c_{\psi,x} \cn[y-\yti;\cac_1^\ka] \lcl 1+2A_{T_0} \rcl \lln t-s \rrn^\ka \lp \frac{1}{2^{\ga+\ka-1}} \rp^{n+1} \lln t-s \rrn^{\ga-\ka}.
\end{eqnarray}
Now, relations (\ref{relation1}) and (\ref{relation2}) entail
$$
\norm{III_{st}}\le \norm{J_0}+\sum_{i=0}^{\infty} \norm{J_{n+1}-J_{n}}
\le \norm{J_0}+c_{\psi,x} T_0^{\ga-\al}\lcl 1+2A_{T_0}\rcl \cn[y-\yti;\cac_1^\ka] \lln t-s\rrn^\ka .
$$
Furthermore, we have
\begin{eqnarray} \label{j03}
\norm{J_0} &=&\norm{ (t-s)^{-\al} \lc \psi(y_s)-\psi(\yti_s) \rc (\der x)_{st} }\\
& \leq& \lln t-s \rrn^\ka \lln t-s\rrn^{\ga-\al-\ka} \cn[x;\cac_1^\ga] \|D\psi\|_\infty \cn[y-\yti;\cac_1^\ka] s^\ka, \nonumber  \\
& \leq& c_{\psi,x} T_0^{\ga-\al-\ka} \lln t-s\rrn^\ka \cn[y-\yti ;\cac_1^\ka ] \nonumber
\end{eqnarray}
which finally yields
$$\norm{III_{st}} \leq c_{\psi,x}  T_0^{\ga-\al-\ka}\lcl 1+2A_{T_0}\rcl \cn[y-\yti;\cac_1^\ka] \lln t-s\rrn^\ka .$$

\smallskip

\noindent
\textit{Case of $IV$:} In this case, the approximating sequence is defined by:
$$s_n^i=\frac{is}{2^n}, \quad J_n=\sum_{i=0}^{2^n-1} f_{s,t}(s_n^i) \lc \psi(y_{s_n^i})-\psi(\yti_{s_n^i})\rc (\der x)_{s_n^i, s_n^{i+1}}.$$
Hence, the difference $J_{n+1}-J_n$ can be decomposed into:
\begin{eqnarray}
\lefteqn{J_{n+1}-J_n} \nonumber\\
&=& \sum_{i=0}^{2^n-1} \lcl \lc f_{s,t}(s_{n+1}^{2i+1})-f_{s,t}(s_{n+1}^{2i})\rc \lc \psi(y_{s_{n+1}^{2i+1}})-\psi(\yti_{s_{n+1}^{2i+1}}) \rc \rcl(\der x)_{s_{n+1}^{2i+1},s_{n+1}^{2i+2}}  \nonumber\\
& & \hspace{0.3cm}+ \sum_{i=0}^{2^n-1} \lcl f_{s,t}(s_{n+1}^{2i}) \lc \psi(y_{s_{n+1}^{2i+1}})-\psi(\yti_{s_{n+1}^{2i+1}})-\psi(y_{s_{n+1}^{2i}})+\psi(\yti_{s_{n+1}^{2i}})\rc \rcl (\der x)_{s_{n+1}^{2i+1},s_{n+1}^{2i+2}} \nonumber \\
&:=& H+K. \label{decompo-H-J}
\end{eqnarray}
In order to bound these two terms, let us introduce first some $\la \in (\ka,\ga-\al)$. From (\ref{sommef}), and invoking (\ref{estimation-f-s-t}) with $\be=\la$, we obtain
$$\sum_{i=0}^{2^n-1} \lln f_{s,t}(s_{n+1}^{2i+1})-f_{s,t}(s_{n+1}^{2i}) \rrn \leq c \lln t-s \rrn^\la \frac{(2^{\al+\la})^{n+1}}{s^{\al+\la}},$$
while $\norm{\psi(y_{s_{n+1}^{2i+1}})-\psi(\yti_{s_{n+1}^{2i+1}}) } \leq \norm{\psi'}_\infty \cn[y-\yti;\cac_1^\ka] \, s^\ka$, and so
\begin{eqnarray} 
\norm{ H } &\leq& c_{\psi,x} \lln t-s \rrn^\ka \lln t-s \rrn^{\la-\ka}\cn[y-\yti;\cac_1^\ka] s^{\ga+\ka-\al-\la} \lp \frac{1}{2^{\ga-\al-\la}} \rp^{n+1} \label{relation3}\\
&\leq & c_{\psi,x} \lln t-s \rrn^\ka T_0^{\ga-\ka} \cn[y-\yti;\cac_1^\ka]\lp \frac{1}{2^{\ga-\al-\la}} \rp^{n+1}.\nonumber
\end{eqnarray}
To estimate $\norm{ K }$, remember that
\begin{multline*}  
\norm{ \psi(y_{s_{n+1}^{2i+1}})-\psi(\yti_{s_{n+1}^{2i+1}})-\psi(y_{s_{n+1}^{2i}})+\psi(\yti_{s_{n+1}^{2i}}) } \\
\leq c_\psi \lcl 1+\cn[y;\cac_1^\ka]+\cn[\yti;\cac_1^\ka] \rcl \cn[y-\yti;\cac_1^\ka ] \frac{s^\ka}{(2^{n+1})^\ka},
\end{multline*}
which, together with (\ref{estimation-f-s-t}) applied with $\be=\ga-\al$, gives 

\begin{eqnarray}
\norm{K} & \leq & c_{\psi,x}  \lln t-s \rrn^{\ga-\al}\lcl 1+2A_{T_0} \rcl \cn[y-\yti;\cac_1^\ka]s^\ka \lp \frac{1}{2^{\ka+\ga}} \rp^{n+1} \sum_{i=0}^{2^n-1} \lp 1-\frac{2i}{2^{n+1}} \rp^{-\ga} \nonumber\\
& \leq & c_{\psi,x}  \lln t-s \rrn^\ka \lln t-s\rrn^{\ga-\al-\ka}  \lcl 1+2A_{T_0} \rcl   \nonumber \\
& & \hspace{5cm} \cn[y-\yti;\cac_1^\ka] T_0^\ka\lp \frac{1}{2^{\ka+\ga-1}}\rp^{n+1} \int_0^1 \frac{du}{(1-u)^{\ga}} \label{relation4} \\
&\leq & c_{\psi,x}   \lln t-s \rrn^\ka T_0^{\ga-\al} \lcl 1+2A_{T_0} \rcl\cn[y-\yti;\cac_1^\ka] \lp \frac{1}{2^{\ka+\ga-1}} \rp^{n+1}.\nonumber
\end{eqnarray}

\smallskip

As a result, combining the estimates for $H$ and $K$ along the same lines as for the term $III_{st}$, we end up with:
$$
\norm{IV_{st}}\le
\norm{ J_0} +c_{\psi,x} \lcl 1+2A_{T_0} \rcl \cn[y-\yti;\cac_1^\ka] \lln t-s \rrn^\ka T_0^{\ga-\al}.
$$
But $J_0=\lc t^{-\al}-s^{-\al}\rc \lc \psi(y_0) -\psi(\yti_0) \rc (\der x)_{0s}=0$, so that finally
$$
\norm{IV_{st}}\le
c_{\psi,x} T_0^{\ga-\al}\lcl 1+2A_{T_0} \rcl\cn[y-\yti;\cac_1^\ka] \lln t-s \rrn^\ka.
$$

\smallskip

We have thus proved that $$\cn[z-\zti;\cac_1^\ka] \leq c_{\psi,x} T_0^{\ga-\al-\ka} \lcl 1+2A_{T_0} \rcl \cn[y-\yti;\cac_1^\ka].$$
The contraction property then clearly holds when $\Gamma$ is restricted to a stable ball $\cb_{a,T_0}$, for $T_0$ small enough. This easily yields the existence and uniqueness of a solution to (\ref{eq:volterra-singular}) on $[0,T_0]$.

\end{proof}

\smallskip

The following proposition summarizes the extension of the unique solution to (\ref{eq:volterra-singular}) to an arbitrary interval.
\begin{proposition}[Global existence]
Under the same hypothesis as for Theorem \ref{theo:singular-case}, the local solution $\yun \in \cac_1^\ka([0,T_0])$ can be extended in a unique way into a global solution in $\cac_1^\ka([0,T])$.
\end{proposition}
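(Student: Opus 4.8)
The plan is to reproduce the extension scheme of Proposition~\ref{extension-cas-young}: the point will be to produce a length $\ep>0$ depending only on $\psi$, $x$, $\ga$, $\al$, $\ka$ and $T$ --- but \emph{not} on the local solution $\yun$ --- such that $\yun$ can be prolonged in a unique way into a solution of~(\ref{eq:volterra-singular}) on $[0,T_0+\ep]$; since $\ep$ is uniform, finitely many iterations of this step then cover $[0,T]$. Concretely, I would fix $\ep$ (to be tuned) and, for $z\in\cac_1^\ka([0,T_0+\ep])$ with $z_{|[0,T_0]}=\yun$, consider
$$\hz_t=\Gamma(z)_t=\begin{cases}\yun_t & t\in[0,T_0]\\ a+\int_0^t(t-u)^{-\al}\psi(z_u)\,dx_u & t\in[T_0,T_0+\ep]\end{cases},$$
the integral being understood in the sense of Lemma~\ref{lem:4.3}, and look for a fixed point of $\Gamma$ in a suitable ball.

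The only genuinely new feature with respect to the regular extension is that the integral defining $\hz_t$ runs over $[0,t]$, hence over the already frozen stretch $[0,T_0]$. For $T_0\le s<t\le T_0+\ep$ I would therefore decompose, as in Proposition~\ref{prop:4.5}, $(\der\hz)_{st}=I_{st}+II_{st}$ with $I_{st}=\int_s^t(t-u)^{-\al}\psi(z_u)\,dx_u$ and $II_{st}=\int_0^s\lc(t-u)^{-\al}-(s-u)^{-\al}\rc\psi(z_u)\,dx_u$, and split further $II_{st}=II^{\mathrm{new}}_{st}+II^{\mathrm{past}}_{st}$ along $\int_0^s=\int_{T_0}^s+\int_0^{T_0}$ (in $II^{\mathrm{past}}$ one simply has $z_u=\yun_u$). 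The terms $I$ and $II^{\mathrm{new}}$ only see $z$ on sub-intervals of $[T_0,T_0+\ep]$ of length at most $\ep$: running verbatim the dyadic-approximation estimates of the proof of Proposition~\ref{prop:4.5} (Cases $I$ and $II$) on $[s,t]$ and $[T_0,s]$, and controlling the junction terms through~(\ref{estimation-f-s-t}) with $\be=\ka$, one gets $\norm{I_{st}}+\norm{II^{\mathrm{new}}_{st}}\le c_{\psi,x}\,\ep^{\ga-\al-\ka}\lln t-s\rrn^\ka\{1+\cn[z;\cac_1^\ka([T_0,T_0+\ep])]\}$, in which, crucially, the constant does not depend on $\yun$ and $\ga-\al-\ka>0$ produces the small factor $\ep^{\ga-\al-\ka}$.

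The delicate term, and the one I expect to be the main obstacle, is $II^{\mathrm{past}}$: here the kernel $f_{s,t}(u)=(t-u)^{-\al}-(s-u)^{-\al}$ is regular on $[0,T_0]$ for $s>T_0$ but redevelops its singularity as $s\downarrow T_0$, so one needs a bound that is uniform up to the junction. The idea is to treat it exactly like the ``$II$'' term of Proposition~\ref{prop:4.5}: a dyadic scheme on $[0,T_0]$, exploiting the monotonicity of $f_{s,t}$ together with the elementary estimate~(\ref{estimation-f-s-t}) used with $\be=\ka$ on the ``$D$-type'' piece and with $\be=\ga-\al$ on the ``$E$-type'' piece, the two resulting dyadic series converging precisely because $\ga-\al-\ka>0$ \emph{and} $\ka+\ga-1>0$ --- both of which are built into the standing hypothesis $\ka\in(1-(\ga-\al),\ga-\al)$. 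This should yield $\norm{II^{\mathrm{past}}_{st}}\le c_{\psi,x}\,T^{\ga-\al}\lln t-s\rrn^\ka\{1+\cn[\yun;\cac_1^\ka([0,T_0])]\}$: a finite bound depending on $\yun$ only through the fixed number $\cn[\yun;\cac_1^\ka([0,T_0])]$, which is harmless since it will merely enter the radius of the invariant ball. After handling the mixed case $0\le s\le T_0\le t$ via $(\der\hz)_{st}=(\der\hz)_{sT_0}+(\der\hz)_{T_0t}$, one arrives at $\cn[\hz;\cac_1^\ka([0,T_0+\ep])]\le c_{\psi,x}^1\{1+\cn[\yun;\cac_1^\ka([0,T_0])]+\ep^{\ga-\al-\ka}\cn[z;\cac_1^\ka([0,T_0+\ep])]\}$; choosing $\ep=(2c_{\psi,x}^1)^{-1/(\ga-\al-\ka)}$ (independent of $\yun$) and $N_1=2c_{\psi,x}^1\{1+\cn[\yun;\cac_1^\ka([0,T_0])]\}$ makes the ball $\{z:z_{|[0,T_0]}=\yun,\ \cn[z;\cac_1^\ka([0,T_0+\ep])]\le N_1\}$ invariant under $\Gamma$.

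Finally, for the contraction I would observe that $II^{\mathrm{past}}$ is \emph{identical} for $\Gamma(z^{(1)})$ and $\Gamma(z^{(2)})$ whenever $z^{(1)},z^{(2)}$ agree with $\yun$ on $[0,T_0]$, hence drops out of $\der(\Gamma(z^{(1)})-\Gamma(z^{(2)}))$; the remaining pieces are estimated by the same dyadic arguments as in the ``Case of $III$'' and ``Case of $IV$'' parts of the local-existence proof above, now on sub-intervals of $[T_0,T_0+\eta]$ with $\eta\le\ep$, using $z^{(1)}_s-z^{(2)}_s=(z^{(1)}_s-z^{(2)}_s)-(z^{(1)}_{T_0}-z^{(2)}_{T_0})$ to gain an extra factor $\eta^\ka$. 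This gives $\cn[\Gamma(z^{(1)})-\Gamma(z^{(2)});\cac_1^\ka([0,T_0+\eta])]\le c_{\psi,x}^1\,\eta^{\ga-\al-\ka}\{1+N_1^\ka+N_1\}\,\cn[z^{(1)}-z^{(2)};\cac_1^\ka([0,T_0+\eta])]$, so a small enough $\eta$ (depending on $N_1$, hence on $\yun$, which is allowed) turns $\Gamma$ into a strict contraction on the corresponding ball, itself stable by the obvious analogue of Lemma~\ref{lem:stable-ball}; this produces a unique solution on $[0,T_0+\eta]$ of $\cac_1^\ka$-norm $\le N_1$. Since $\eta$ is fixed once $N_1$ is, the same argument advances the solution to $[0,T_0+2\eta]$, $[0,T_0+3\eta]$, and so on until $[0,T_0+\ep]$ is reached, uniqueness at each step following from the very same contraction inequality (two solutions being first known to coincide on the previously built interval, so that no ball restriction is needed). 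As $\ep$ does not depend on the solution, restarting the whole construction from $T_0$, then $T_0+\ep$, then $T_0+2\ep$, \dots, reaches $[0,T]$ after finitely many rounds, which is the assertion.
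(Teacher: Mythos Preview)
Your proposal is correct and follows the same extension scheme as the paper. The one organisational difference is that you split $II_{st}=\int_0^s f_{s,t}(u)\psi(z_u)\,dx_u$ into a ``new'' part on $[T_0,s]$ and a ``past'' part on $[0,T_0]$, whereas the paper does not: it simply reruns the dyadic bounds (\ref{termeAbis}), (\ref{terme-E}), (\ref{j02}) of Proposition~\ref{prop:4.5} on the full interval $[0,s]$ and observes that the only $\cn[y;\cac_1^\ka]$-dependent piece (the $E$-term) already carries a factor $|t-s|^{\ga-\al-\ka}\le\ep^{\ga-\al-\ka}$, the remaining pieces being uniformly bounded in $\yun$; this delivers your Step~1 inequality directly, without isolating $II^{\mathrm{past}}$. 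Likewise in the contraction the paper reuses (\ref{relation1})--(\ref{relation4}) on all of $[0,s]$, arriving at a factor $\eta^{\la-\ka}$ with the auxiliary exponent $\la\in(\ka,\ga-\al)$; your remark that $II^{\mathrm{past}}$ cancels is a pleasant shortcut but not needed. In short: your split is valid but superfluous, and the paper's route is slightly shorter.
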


\begin{proof}
We resort to the same scheme as in Proposition \ref{extension-cas-young}, in which we try to exploit the estimations of the previous proof.

\smallskip

\noindent
\textit{Step 1: Invariance of a ball.} Let $\ep >0$ and $y \in \cac^\ka([0,T_0+\ep])$ such that $y_{|[0,T_0]}=\yun$. Set
$$z_t=\Gamma(y)_t=\begin{cases}
\yun_t & \text{if $t\in [0,T_0]$}\\
a+\int_0^t (t-u)^{-\al} \psi(y_u) \, dx_u & \text{if $t\in [T_0,T_0+\ep]$}.
\end{cases}$$

\smallskip

Let $s,t \in [T_0,T_0+\ep]$ and consider the decomposition (\ref{decompo}) of $(\der z)_{st}$. For $I$, use (\ref{decompojn}), together with the estimations (\ref{termeA}), (\ref{termeB}) and (\ref{j0}), to deduce
\bean
\norm{ \int_s^t (t-u)^{-\al} \psi(y_u) \,dx_u } \leq c_{\psi,x} \lln t-s \rrn^\ka \lcl 1+\ep^{\ga-\al} \cn[y;\cac_1^\ka] \rcl.
\eean
As for $II$, use (\ref{decompojn2}), together with (\ref{termeAbis}), (\ref{terme-E}) and (\ref{j02}) to assert
\bean
\norm{\int_0^s \lc  (t-u)^{-\al}-(s-u)^{-\al}\rc\psi(y_u) \, dx_u } \leq c_{\psi,x}\lln t-s \rrn^\ka \lcl 1+\ep^{\ga-\al-\ka} \cn[y;\cac_1^\ka] \rcl .
\eean

As a result,
$$\cn[z;\cac_1^\ka([T_0,T_0+\ep])] \leq c_{\psi,x} \lcl 1+\ep^{\ga-\al-\ka} \cn[y;\cac_1^\ka] \rcl.$$
By copying the arguments of the proof of Proposition \ref{extension-cas-young}, we then deduce the existence of a small $\ep$, independent of $\yun$, and a radius $N_1$, such that the ball
$$\cb_{\yun,T_0,\ep}:= \lcl y\in \cac_1^\ka([0,T_0+\ep]): \ y_{|[0,T_0]}=\yun, \ \cn[y;\cac_1^\ka] \leq N_1\rcl$$
is invariant by $\Gamma$.

\smallskip

\noindent
\textit{Step 2: Contraction property.} Let $\eta \leq \ep$ , and consider $y,\yti \in \cac_1^\ka([0,T_0+\eta])$ such that $y_{|[0,T_0]}=\yti_{|[0,T_0]}=\yun$, $\cn[y;\cac_1^\ka ] \leq N_1$ and $\cn[\yti;\cac_1^\ka ] \leq N_1$. Set $z=\Gamma(y)$, $\zti=\Gamma(\yti)$.

\smallskip

Let $s,t\in [T_0,T_0+\eta]$ and consider the decomposition (\ref{decompocontra}) of $\der(z-\zti)_{st}$. For $III$, use (\ref{decompojn3}), together with (\ref{relation1}), (\ref{relation2}) and (\ref{j03}), to obtain
$$\norm{ III_{st}}     \leq c_{\psi,x} \eta^{\ga-\al-\ka}\lln t-s\rrn^\ka \lcl 1+2N_1 \rcl \cn[y-\yti;\cac_1^\ka].$$
As far as $IV$ is concerned, the decomposition (\ref{decompo-H-J}), together with (\ref{relation3}), (\ref{relation4}) and the fact that $\psi(y_0)=\psi(\yti_0)$, provides
$$\norm{IV_{st}} \leq c_{\psi,x}\eta^{\la-\ka}  \lln t-s\rrn^\ka \lcl 1+2N_1 \rcl \cn[y-\yti;\cac_1^\ka ].$$

\smallskip

Therefore,
$$\cn[z-\zti;\cac_1^\ka([T_0,T_0+\eta ])] \leq c_{\psi,x} \eta^{\la-\ka} \lcl 1+2N_1 \rcl \cn[y-\yti;\cac_1^\ka ]. $$
The end of the proof follows then exactly the same lines as the proof of Proposition \ref{extension-cas-young}.

\end{proof}


\section{The rough case}
\label{sec:rough-case}
In this section, we go back to equation (\ref{eq:volterra}), with a smooth and bounded coefficient $\si$. However, we will only assume that $x$ belongs to $\cac_1^\ga([0,T]; \R^n)$ for some $\ga \in (1/3,1/2)$, which means in particular that we can no longer resort to Young's interpretation for $\int_0^t \si(t,u,y_u) \, dx_u$ and some rough path type considerations must come into the picture. We will thus briefly review the setting used in this context, and then prove a local existence and uniqueness result for our equation.

\subsection{Controlled processes}
For sake of conciseness, we only recall here the key ingredients of the formalism introduced in \cite{Gu} in order to handle integrals driven by an irregular signal $x$. First, as usual in the rough path theory, we will have to assume a priori the following hypothesis:

\begin{hypothesis}\label{hyp:x2}
The path $x$ admits a Levy area, that is a process $x^2 \in \cac_2^{2\ga}([0,T];\R^{n,n})$ such that
$$\der x^2=\der x \otimes \der x , \quad \mbox{i.e.} \quad (\der x^2)_{sut}(i,j)=(\der x^i)_{su} \otimes (\der x^j)_{ut}, 
$$
for all $s,u,t \in [0,T]$ and $i,j \in \{ 1,\cdots,n\}$.
\end{hypothesis}

As explained in \cite{Gu}, we are then incited to introduce a particular subspace of the space of Hölder continuous functions $\cac_1^\ga([0,T];\R^{1,k})$, which are the convenient processes to be integrated with respect to $x$:
\begin{definition}
Let $k\in \N^\ast$ and $\eta > \ga$. A process $y \in \cac_1^\ga([0,T];\R^{1,k})$ is said to be $(\ga,\eta)$-controlled by $x$ if there exists $y' \in \cac_1^{\eta-\ga}([0,T];\cl(\R^n,\R^{1,k}))$, $r^y\in \cac_2^{\eta}([0,T];\R^{1,k})$ such that
\begin{equation}\label{decompo-controlled-path}
(\der y)_{st}=y'_s (\der x)_{st}+r^y_{st}, \quad \mbox{for any} \ s,t \in [0,T].
\end{equation}
\end{definition}

\begin{remark}
The decomposition (\ref{decompo-controlled-path}) is not necessarily unique. However, if we fix $y,y'$, then, of course, the remainder $r^y$ is uniquely determined. For this reason, we shall denote $\cq^{\ga,\eta}([0,T];\R^{1,k})$ the space of couples $(y,y') \in \cac_1^\ga([0,;\R^{1,k}) \times \cac_1^{\eta-\ga}([0,T];\cl(\R^n,\R^{1,k}))$ such that the decomposition (\ref{decompo-controlled-path}) holds. This space is endowed with the natural semi-norm
\begin{align*}
&\cn[y;\cq^{\ga,\eta}([0,T];\R^{1,k})]=\cn[(y,y');\cq^{\ga,\eta}([0,T];\R^{1,k})]\\
&:=\cn[y;\cac_1^\ga([0,T];\R^{1,k})]+\cn[y';\cac_1^0([0,T];\cl(\R^n,\R^{1,k})]
+\cn[y';\cac_1^{\ga-\eta}([0,T];\cl(\R^n,\R^{1,k})]  \\
&\hspace{11cm}+\cn[r^y;\cac_2^{\eta}([0,T];\R^{1,k})].
\end{align*}
\end{remark}

Observe that if $(y,y')\in \cq^{\ga,\eta}([0,T];\R^{1,k})$, then 
\begin{equation}\label{lien-normes}
\cn[y;\cac_1^\ga([0,T];\R^{1,d})] \leq c_x \lcl \norm{y'_0}+T^{\eta-\ga} \cn[y;\cq^{\ga,\eta}([0,T];\R^{1,d})] \rcl.
\end{equation}
Finally, let us denote $\cq^\ga([0,T];\R^{1,k})=\cq^{\ga,2\ga}([0,T];\R^{1,k})$.

\smallskip

With our main equation (\ref{eq:volterra-young}) in mind, it is important for us to get a stability property for controlled processes, when composed with the map $\si$. This is the object of the following proposition (for which we recall the notation on gradient of functions given at the end of the introduction).

\begin{proposition}\label{stabilite-sigma}
Let $(y,y')\in \cq^\ga([0,T];\R^{1,d})$, with decomposition $\der y=y' (\der x)+r^y$, and consider $\si \in \cac^{2,\textbf{\textit{b}}}([0,T]^2 \times \R^{1,d};\R^{d,n})$. For $i=1,\cdots, d$, denote by $\si_i(z)$ the $i\textsuperscript{th}$ line of $\si(z)$ when considered as a matrix. Then, for any $t\geq 0$, $(\si_i(t,.,y_.),D_3\si_i(t,.,y_.) \circ y') \in \cq^\ga([0,T]; \R^{1,n})$ and
\begin{equation}\label{eq:stabilite-sigma}
\cn[\si_i(t,.,y_.); \cq^\ga([0,T];\R^{1,n})]\leq c_\si \lcl 1+\cn[y;\cq^\ga([0,T];\R^{1,d})]^2\rcl,
\end{equation}
where $c_\si$ does not depend on $t$.
\end{proposition}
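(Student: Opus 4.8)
The plan is to verify directly that the pair $(\si_i(t,.,y_.),D_3\si_i(t,.,y_.)\circ y')$ satisfies a decomposition of the form \eqref{decompo-controlled-path} and that each of the four semi-norm components appearing in the definition of $\cn[\cdot;\cq^\ga]$ is controlled by $c_\si(1+\cn[y;\cq^\ga([0,T];\R^{1,d})]^2)$, uniformly in $t$. First I would write, for fixed $t$ and $s,u\in[0,T]$,
\[
(\der\si_i(t,.,y_.))_{su}=\si_i(t,u,y_u)-\si_i(t,s,y_s),
\]
and Taylor-expand $\si_i(t,\cdot,\cdot)$ around $(s,y_s)$ to first order, using that $\si\in\cac^{2,\textbf{\textit{b}}}$: this produces the leading term $D_3\si_i(t,s,y_s)\,(\der y)_{su}$ plus a term of order $|u-s|$ coming from the second ($u$-)variable plus a quadratic remainder of order $|(\der y)_{su}|^2$ and a mixed term of order $|u-s|\,|(\der y)_{su}|$. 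Then, substituting $(\der y)_{su}=y'_s(\der x)_{su}+r^y_{su}$ into the leading term, I would isolate $D_3\si_i(t,s,y_s)\,y'_s\,(\der x)_{su}$ as the ``Gubinelli derivative'' part, and collect everything else into the remainder
\[
r^{\si_i}_{su}:=(\der\si_i(t,.,y_.))_{su}-D_3\si_i(t,s,y_s)\,y'_s\,(\der x)_{su},
\]
which must be shown to lie in $\cac_2^{2\ga}$.

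Next I would estimate the four pieces. The sup-norm $\cn[\si_i(t,.,y_.);\cac_1^0]\le\norm{\si}_\infty$ is immediate. For $\cn[\si_i(t,.,y_.);\cac_1^\ga]$, boundedness of $D\si$ gives $\norm{(\der\si_i(t,.,y_.))_{su}}\le\norm{D\si}_\infty(|u-s|+|(\der y)_{su}|)\le c_\si(T^{1-\ga}+\cn[y;\cac_1^\ga])|u-s|^\ga$, and by \eqref{lien-normes} this is bounded by $c_\si(1+\cn[y;\cq^\ga])$. For the derivative $D_3\si_i(t,.,y_.)\circ y'$: its sup-norm is $\le\norm{D\si}_\infty\cn[y';\cac_1^0]$, and its $\ga$-H\"older seminorm is controlled by differentiating the composition, $\norm{D_3\si_i(t,u,y_u)y'_u-D_3\si_i(t,s,y_s)y'_s}\le\norm{D^2\si}_\infty(|u-s|+|(\der y)_{su}|)\cn[y';\cac_1^0]+\norm{D\si}_\infty\cn[y';\cac_1^\ga]|u-s|^\ga$, each term being $\le c_\si(1+\cn[y;\cq^\ga]^2)|u-s|^\ga$ after using \eqref{lien-normes} and $|(\der y)_{su}|\le\cn[y;\cac_1^\ga]|u-s|^\ga$. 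Finally, for the remainder $r^{\si_i}$: grouping the terms from the Taylor expansion, $r^{\si_i}_{su}$ is a sum of (a) $[D_3\si_i(t,s,y_s+\theta(\der y)_{su})-D_3\si_i(t,s,y_s)]$ acting on $(\der y)_{su}$, which is $\le\norm{D^2\si}_\infty|(\der y)_{su}|^2\le c_\si\cn[y;\cac_1^\ga]^2|u-s|^{2\ga}$; (b) $D_3\si_i(t,s,y_s)\,r^y_{su}$, which is $\le\norm{D\si}_\infty\cn[r^y;\cac_2^{2\ga}]|u-s|^{2\ga}$; (c) the terms coming from the $u$-variable, of order $|u-s|$ and $|u-s|\,|(\der y)_{su}|$, both bounded by $c_\si(1+\cn[y;\cac_1^\ga])|u-s|^{2\ga}$ since $1\ge 2\ga$ and $1+\ga\ge 2\ga$ on $[0,T]$ (absorbing $T$-powers into the constant). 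Summing these gives $\cn[r^{\si_i};\cac_2^{2\ga}]\le c_\si(1+\cn[y;\cq^\ga]^2)$ via \eqref{lien-normes}.

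Collecting the four bounds yields \eqref{eq:stabilite-sigma}, with a constant $c_\si$ depending only on $\norm{\si}_{\cac^{2,\textbf{\textit{b}}}}$ and $T$ but manifestly not on $t$, since $t$ enters only as a frozen first argument of $\si$ and its derivatives, all of which are bounded uniformly in $t$. I expect the main obstacle to be purely bookkeeping: keeping the Gubinelli-derivative term $D_3\si_i(t,s,y_s)y'_s$ cleanly separated from the remainder and making sure that every leftover term genuinely carries a $|u-s|^{2\ga}$ factor — this requires the elementary but essential observations $2\ga<1$ and $2\ga\le1+\ga$, so that the ``extra'' powers of $|u-s|$ coming from the smooth $u$-dependence of $\si$ only help. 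No single estimate is deep; the care lies in organizing the expansion so that the quadratic dependence on $\cn[y;\cq^\ga]$ (and no worse) emerges, which is exactly what the $\cac^{2,\textbf{\textit{b}}}$ hypothesis is designed to give.
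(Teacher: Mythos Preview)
Your proof is correct and follows essentially the same route as the paper's: both Taylor-expand $\si_i(t,\cdot,\cdot)$ to isolate $D_3\si_i(t,s,y_s)\circ y'_s$ as the Gubinelli derivative, then bound the remainder pieces exactly as you describe (the paper uses the integral form $\int_0^1 D\si(t,\varphi_{su}(r))\,dr$ rather than a Lagrange-type remainder, but the resulting terms and estimates are the same). One small bookkeeping slip: the $\cq^\ga$ semi-norm does not contain $\cn[\si_i(t,.,y_.);\cac_1^0]$, so your bound $\le\norm{\si}_\infty$ is unnecessary (and in fact $\cac^{2,\textbf{\textit{b}}}$ only guarantees bounded \emph{derivatives}); the four components you actually need are $\cn[\si_i^t(\cy);\cac_1^\ga]$, $\cn[(\si_i^t(\cy))';\cac_1^0]$, $\cn[(\si_i^t(\cy))';\cac_1^\ga]$ and $\cn[r^{\si_i};\cac_2^{2\ga}]$, all of which you correctly estimate.
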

\begin{proof}
See Appendix.
\end{proof}

Let us now turn to the integration of weakly controlled paths, which is summarized in the following proposition, borrowed from \cite{Gu}. This result requires a little additional notation: if $\varphi \in \cl(\R^n, \R^{1,n})$ and $A\in \R^{n,n}$, we denote $\varphi \cdot A=\sum_{i,j=1}^n \left\langle  \varphi e_i,e_j^\ast \right\rangle A_{ij}$.

\begin{proposition}\label{prop:integration-controlled-paths}
Let $x$ be a signal satisfying Hypothesis \ref{hyp:x2}, and let also $(z,z')$ be an element of $\cq^\ga([0,T];\R^{1,n})$ with decomposition $\der z =z'(\der x)+r^z$. One can define $A\in \cac_1^\ga([0,T];\R)$ by $A_0=a \in \R$ and 
$$(\der A)_{st}=z_s(\der x)_{st}+z'_s \cdot x^2_{st}+\laa_{st}(r^z \der x+\der z' \cdot x^2),$$
and set $\cj(z \, dx)=\cj((z,z') \, dx)=\der A$. Then $\cj(z \, dx)$ coincides with the usual Rieman integral of $z$ with respect to $x$ in case of smooth functions. Moreover, it holds
$$\cj(z \, dx)=\lim_{|\Pi_{st}|\to 0} \sum_i \lcl z_{t_i}(\der x)_{t_it_{i+1}}+z'_{t_i}\cdot x^2_{t_it_{i+1}}\rcl,$$
for any $0\leq s <t\leq T$, where the limit is taken over all the partitions $\Pi_{st}=\{s =t_0 <t_1< \ldots < t_n=t\}$ of $[s,t]$, as the mesh of the partition goes to zero.
\end{proposition}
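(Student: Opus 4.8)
The plan is to realize the integral as the image of an elementary "first--order Riemann increment" under the map $\id-\Lambda\der$ of Corollary~\ref{cor:integration}, so that the proof collapses to one H\"older bookkeeping. Concretely, set
$$g_{st}:=z_s(\der x)_{st}+z'_s\cdot x^2_{st}\in\cac_2([0,T];\R),$$
which is a priori only a $1$-increment, and compute $\der g$ directly from $(\der g)_{sut}=g_{st}-g_{su}-g_{ut}$, using the additivity $\der(\der x)=0$, the controlled decomposition $(\der z)_{su}=z'_s(\der x)_{su}+r^z_{su}$, and Hypothesis~\ref{hyp:x2} in the form $(\der x^2)_{sut}=(\der x)_{su}\otimes(\der x)_{ut}$. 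The crux of this computation is that $z'_s\cdot\bigl((\der x)_{su}\otimes(\der x)_{ut}\bigr)$ is the same scalar as $\bigl(z'_s(\der x)_{su}\bigr)(\der x)_{ut}$, so that the second--order contribution produced by $\der x^2$ cancels exactly against the one produced by substituting $(\der z)_{su}$, leaving
$$(\der g)_{sut}=-\,r^z_{su}(\der x)_{ut}-(\der z')_{su}\cdot x^2_{ut},$$
that is, $\der g=-\bigl(r^z\,\der x+\der z'\cdot x^2\bigr)$.

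The second step is the regularity count, and it is the heart of the matter. Since $(z,z')\in\cq^\ga=\cq^{\ga,2\ga}$, the remainder $r^z$ has a finite $2\ga$--H\"older norm and $z'$ a finite $\ga$--H\"older norm; combined with $x^2\in\cac_2^{2\ga}$ (Hypothesis~\ref{hyp:x2}) and the trivial $\der x\in\cac_2^\ga$, this gives for $r^z\,\der x$ a bound of order $|u-s|^{2\ga}|t-u|^{\ga}$ and for $\der z'\cdot x^2$ a bound of order $|u-s|^{\ga}|t-u|^{2\ga}$, so both lie in $\cac_3^{3\ga}$ and hence $\der g\in\cac_3^{3\ga}$ with $3\ga>1$ because $\ga>1/3$. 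This is the single place where the standing assumption $\ga\in(1/3,1/2)$ is genuinely used. Being a coboundary, $\der g$ automatically lies in $\cz\cac_3^{3\ga}\subset\cz\cac_3^{1+}$, so Corollary~\ref{cor:integration} applies: $(\id-\Lambda\der)g$ is an exact increment, and we define $A$ as the unique element of $\cac_1([0,T];\R)$ with $A_0=a$ and $\der A=(\id-\Lambda\der)g=g-\Lambda(\der g)=g+\Lambda\bigl(r^z\,\der x+\der z'\cdot x^2\bigr)$, which is precisely the announced formula for $(\der A)_{st}$. Finally, (\ref{contraction}) gives $\Lambda(\cdot)\in\cac_2^{3\ga}\subset\cac_2^\ga$, and since the two explicit terms of $g$ are of order $|t-s|^\ga$ and $|t-s|^{2\ga}$ on the fixed interval $[0,T]$, we get $A\in\cac_1^\ga([0,T];\R)$, as claimed. (As the pair $(z,z')$ is part of the data, $\cj(z\,dx)$ really depends on $z'$ too, which is harmless.)

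The Riemann--sum representation is then immediate: the second half of Corollary~\ref{cor:integration} applied to this $g$ says exactly that $(\der A)_{st}=\lim_{|\Pi_{st}|\to0}\sum_i g_{t_it_{i+1}}=\lim_{|\Pi_{st}|\to0}\sum_i\bigl\{z_{t_i}(\der x)_{t_it_{i+1}}+z'_{t_i}\cdot x^2_{t_it_{i+1}}\bigr\}$ over partitions $\Pi_{st}=\{s=t_0<\dots<t_n=t\}$ with vanishing mesh. For the consistency with the usual integral when $x$ (and hence the controlled data) is smooth: the ordinary Riemann sums $\sum_i z_{t_i}(\der x)_{t_it_{i+1}}$ converge to $\int_s^t z_u\,dx_u$, while for a $\cac^1$ path $\norm{x^2_{uv}}\le c\,|v-u|^2$, so the correction obeys $\norm{\sum_i z'_{t_i}\cdot x^2_{t_it_{i+1}}}\le c\,\cn[z';\cac_1^0]\sum_i|t_{i+1}-t_i|^2\le c'\,|\Pi_{st}|\to0$; hence $\cj(z\,dx)$ coincides with $\int z\,dx$ in the smooth case.

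I expect the only genuine subtlety to be carrying the tensor/contraction structure ($z'\in\cl(\R^n,\R^{1,n})$, $x^2\in\R^{n,n}$, the pairing $\varphi\cdot A=\sum_{i,j}\langle\varphi e_i,e_j^\ast\rangle A_{ij}$) correctly through the cancellation in the computation of $\der g$, together with keeping the signs straight; once $\der g=-(r^z\,\der x+\der z'\cdot x^2)$ is established, everything else is either a one--line H\"older estimate or a direct appeal to Corollary~\ref{cor:integration}.
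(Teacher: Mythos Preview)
Your proof is correct. The paper itself does not prove this proposition but imports it from \cite{Gu}; your argument is precisely the standard one underlying Gubinelli's construction, and it is exactly the situation that Corollary~\ref{cor:integration} is tailored for: set $g_{st}=z_s(\der x)_{st}+z'_s\cdot x^2_{st}$, compute $\der g=-(r^z\,\der x+\der z'\cdot x^2)\in\cac_3^{3\ga}$ via the controlled decomposition and Hypothesis~\ref{hyp:x2}, and conclude by $(\id-\Lambda\der)$. Your verification of the tensor cancellation $z'_s\cdot\bigl((\der x)_{su}\otimes(\der x)_{ut}\bigr)=(z'_s(\der x)_{su})(\der x)_{ut}$, of the sign in $\Lambda(\der g)=-\Lambda(r^z\,\der x+\der z'\cdot x^2)$, and of the smooth-case consistency are all sound.
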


It only remains to enunciate the multidimensional version of the previous proposition:

\begin{definition}\label{cor:integration-controlled-paths}
Assume that $z\in \cac_1^\ga([0,T];\R^{d,n})$ is such that for each $z_i$ ($i\textsuperscript{th}$ line of $z$), there exists $z'_i\in \cac_1^\ga ([0,T];\cl(\R^n,\R^{1,n}))$ for which $(z_i,z'_i) \in \cq^\ga([0,T];\R^{1,n})$. Then we define $\cj(z \, dx)=\cj((z,z') \, dx) \in \cac_1^\ga([0,T];\R^{1,d})$ by the natural relations
$$\cj(z \, dx)^{(i)}=\cj((z_i,z'_i) \, dx), \quad i=1,\ldots, d.$$
\end{definition}

\subsection{Rough Volterra equations}
Let us say a few words about the strategy to be used in order to solve equation (\ref{eq:volterra-young}) in case of a rough driving signal. First, this Volterra system will be interpreted according to Propositions \ref{stabilite-sigma} and \ref{prop:integration-controlled-paths} when $(y,y')$ belongs to $\cq^\ga([0,T];\R^{1,d})$ and $\si \in \cac^{2,\textbf{\textit{b}}}([0,T]^2 \times \R^{1,d};\R^{d,n})$. Moreover, in order to settle a fixed point argument, we shall see that the process $z$ defined by $z_0=a$ and
$$(\der z)_{st}=\cj_{st}(\si(t,.,y_.) \, dx)+\cj_{0s}([\si^t-\si^s](\cy) \, dx)$$
is a controlled process (recall that $\cy$ stands for the multidimensional function $s\mapsto(s,y_s)$). Indeed, if we assume that the path $w_i=\si_i^t(\cy)$ can be decomposed as 
$$
\der w_i=\der \si_i^t(\cy)=\si_i^t(\cy)'(\der x)+r^{\si_i^t(\cy)},
$$ 
which can be done owing to Proposition \ref{stabilite-sigma}, and if we set $\der z^{(i)}=\cj(w_i\, dx)$, then one can write $(\der z)_{st}^{(i)}=\si_i(s,s,y_s)(\der x)_{st}+(r^z_{st})^{(i)}$ for $i=1,\ldots,d$, with
\begin{multline*}
(r^z_{st})^{(i)}=\lc \si_i(t,s,y_s)-\si_i(s,s,y_s)\rc (\der x)_{st}+\si_i^t(\cy)'_s \cdot x^2_{st}+\laa_{st}(r^{\si_i^t(\cy)} \der x+\der (\si_i^t(\cy)') \cdot x^2)\\
+\cj_{0s}(\lc \si(t,.,y_.)-\si(s,.,y_.)\rc \, dx)^{(i)}.
\end{multline*}
If we manage to show that $\si(.,.,y_.)^\ast: x \mapsto (\si_1(.,.,y_.)(x),\ldots, (\si_d(.,.,y_.)(x))$ belongs to $\cac_1^\ga([0,T];\cl(\R^n,\R^{1,d}))$ and $r^z \in \cac_2^{2\ga}([0,T];\R^{1,d})$ (which will be done in the course of the following proof), then $(z, \si(.,.,y_.)^\ast) \in \cq^\ga([0,T];\R^{1,d})$ and the application $\Gamma$ introduced in the Young setting becomes here
\begin{equation}\label{gamma-rough}
\Gamma: \cq^\ga([0,T];\R^{1,d}) \to \cq^\ga([0,T];\R^{1,d}), \ (y,y') \mapsto (z, \si(.,.,y_.)^\ast).
\end{equation}
With this notation, a solution of (\ref{eq:volterra-young}) corresponds to a fixed point of $\Gamma$.

\smallskip

We have now all the tools in hand to express the announced (local) result properly:

\begin{theorem}\label{theo-cas-rough}
Let $\ka \in (0,1)$ such that $\ga(\ka +2)>1$, $\si \in \cac^{3,\textbf{\textit{b}},\ka}([0,T]^2 \times \R^d; \R^{d,n})$ and $a\in \R^{1,d}$. Then there exists $T_0 \in (0,T]$ such that the equation 
$$y_t=a+\cj_{0t}(\si(t,.,y_.) \, dx),$$
interpreted in the sense of Definition \ref{cor:integration-controlled-paths}, admits a unique solution in $\cq^\ga([0,T_0]; \R^{1,d})$.
\end{theorem}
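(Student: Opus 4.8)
The plan is to mimic the proof of Theorem~\ref{theo:young-case} closely, splitting the argument into a local existence/uniqueness step (a fixed point of the map $\Gamma$ defined in~(\ref{gamma-rough}) on a small interval $[0,T_0]$) and a global patching discussion, though only the local part is claimed here. First I would set up the fixed point in the space $\cq^\ga([0,T_1];\R^{1,d})$ for an auxiliary horizon $T_1\le T$, working on a ball
$$B^{A}_{T_0,a}=\lcl (y,y')\in\cq^\ga([0,T_0];\R^{1,d}):\ y_0=a,\ y'_0=\si(0,0,a),\ \cn[y;\cq^\ga([0,T_0];\R^{1,d})]\le A\rcl,$$
and check that $\Gamma$ maps this ball into itself for $T_0$ small and $A$ suitably chosen. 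The main input is Proposition~\ref{stabilite-sigma}, giving the quadratic estimate $\cn[\si_i^t(\cy);\cq^\ga]\le c_\si(1+\cn[y;\cq^\ga]^2)$ uniformly in $t$, combined with the integration bound implicit in Proposition~\ref{prop:integration-controlled-paths} (a constant $c_x$ depending on $\norm{x}_\ga$ and $\norm{x^2}_{2\ga}$), and the remainder formula displayed just before the theorem.

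The heart of the invariance step is to control the $\cq^\ga$-seminorm of $z=\Gamma(y)$ through its four pieces: $\cn[z;\cac_1^\ga]$, $\norm{(z')_0}$, $\cn[z';\cac_1^{\ga-\eta}]=\cn[\si(.,.,y_.)^\ast;\cac_1^\ga]$, and $\cn[r^z;\cac_2^{2\ga}]$. The term $z'=\si(.,.,y_.)^\ast$ is handled by Proposition~\ref{stabilite-sigma}. For the remainder $(r^z_{st})^{(i)}$ I would treat its five summands separately. The terms $\si_i^t(\cy)'_s\cdot x^2_{st}$, $\laa_{st}(r^{\si_i^t(\cy)}\der x)$ and $\laa_{st}(\der(\si_i^t(\cy)')\cdot x^2)$ are exactly as in the standard (non-Volterra) rough case of~\cite{Gu}: they are $2\ga$-H\"older with norm controlled by $\cn[\si_i^t(\cy);\cq^\ga]$ and hence by $c_\si(1+\cn[y;\cq^\ga]^2)$, using $\ga(\ka+2)>1$ so that $\laa$ applies to the relevant $\cZ\cac_3^\mu$ elements (here one needs the $\ka$-regularity of $D^{(3)}\si$ to push the $\Lambda$-argument through, which is why $\si\in\cac^{3,\textbf{\textit{b}},\ka}$). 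The genuinely Volterra terms are $[\si_i(t,s,y_s)-\si_i(s,s,y_s)](\der x)_{st}$ and $\cj_{0s}([\si(t,.,y_.)-\si(s,.,y_.)]\,dx)^{(i)}$: for the first, the mean value theorem in the time variable of $\si$ gives a factor $\lln t-s\rrn$, so one gains $\lln t-s\rrn^{1+\ga}$ which is better than $\lln t-s\rrn^{2\ga}$ since $\ga<1$; for the second, I would invoke Proposition~\ref{prop:integration-controlled-paths} applied to the controlled path $[\si^t-\si^s](\cy)$ together with the time-regularity of $\si$, using a Volterra analogue of Lemma~\ref{estimation-sigma} (estimating $\cn[[\si^t-\si^s](\cy);\cq^\ga(I)]\le c_\si\lln t-s\rrn(1+\cn[y;\cq^\ga(I)]^2)$ and similarly with a $\ka$-loss for differences in $y$), yielding a bound of the form $c_{\si,x}\lln t-s\rrn T_0^{\,\text{(power)}}(1+\cn[y;\cq^\ga]^2)$, absorbed into $\lln t-s\rrn^{2\ga}$ for $T_0$ small. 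Collecting everything gives $\cn[z;\cq^\ga([0,T_0])]\le c_{\si,x}\{1+T_0^{\beta}\cn[y;\cq^\ga]^2\}$ for some $\beta>0$, from which, exactly as in Step~1 of the proof of Proposition~\ref{prop:3.6}, one picks $T_1$ and then for each $T_0\le T_1$ a radius $A_{T_0}$ (increasing in $T_0$) making $B^{A_{T_0}}_{T_0,a}$ invariant.

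For the contraction step I would take $(y,y'),(\yti,\yti')\in B^{A_{T_0}}_{T_0,a}$ and estimate $\cn[z-\zti;\cq^\ga([0,T_0])]$ by decomposing $\der(z-\zti)$ and $r^z-r^{\zti}$ along the same lines, now using the Lipschitz-type (in fact $\ka$-H\"older in the derivative) dependence of the maps involved — a difference version of Proposition~\ref{stabilite-sigma} and of the Volterra estimates above, producing the extra small factor $\cn[y-\yti;\cq^\ga]$ and a positive power of $T_0$. The outcome is $\cn[z-\zti;\cq^\ga([0,T_0])]\le c_{\si,x}T_0^{\beta}(1+A_{T_0})^{?}\cn[(y,y')-(\yti,\yti');\cq^\ga([0,T_0])]$, so since the prefactor $\to0$ as $T_0\to0$ one shrinks $T_0$ once more to make $\Gamma$ a strict contraction on the invariant ball; Banach's fixed point theorem then yields the unique solution in $\cq^\ga([0,T_0];\R^{1,d})$. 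The main obstacle I anticipate is bookkeeping the remainder term $r^z$: unlike the Young case where one only tracks a single H\"older norm, here one must simultaneously show $z'=\si(.,.,y_.)^\ast\in\cac_1^\ga$ and $r^z\in\cac_2^{2\ga}$, and in particular verify that the Volterra convolution term $\cj_{0s}([\si^t-\si^s](\cy)\,dx)$ — which carries the whole past — is genuinely $2\ga$-H\"older in $(s,t)$ with the needed quadratic growth, and that the condition $\ga(\ka+2)>1$ is exactly what makes all the $\Lambda$-applications legitimate. (The authors explicitly flag in Section~\ref{sec:5.3} that this Volterra term is what later obstructs globalization, but for the local result it is merely a delicate, not fatal, estimate.)
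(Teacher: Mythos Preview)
Your proposal is correct and follows essentially the same route as the paper: the theorem is deduced from an invariance-of-a-ball proposition and a contraction proposition, both resting on the controlled-path analogue of Lemma~\ref{estimation-sigma} that you describe (this is Lemma~\ref{estimation-sigma-rough} in the paper, giving exactly your estimate $\cn[[\si_i^t-\si_i^s](\cy);\cq^\ga]\le c_\si\lln t-s\rrn(1+\cn[y;\cq^\ga]^2)$ and its two difference versions).

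One small correction on the role of $\ka$: the standard (non-Volterra) $\Lambda$-applications in the invariance step---the terms $\laa_{st}(r^{\si_i^t(\cy)}\der x)$ and $\laa_{st}(\der(\si_i^t(\cy)')\cdot x^2)$---do \emph{not} require $\ga(\ka+2)>1$ or the $\ka$-regularity of $D^{(3)}\si$; they live in $\cZ\cac_3^{3\ga}$ and $3\ga>1$ holds automatically from $\ga>1/3$ (indeed Proposition~\ref{stabilite-sigma} only asks $\si\in\cac^{2,\textbf{\textit{b}}}$). The condition $\ga(\ka+2)>1$ and the extra $\ka$-H\"older regularity of $D^{(3)}\si$ enter only in the contraction step, precisely when estimating the double-difference Volterra term $\laa_{0s}\big([r^{\si_i^t(\cy)}-r^{\si_i^s(\cy)}-r^{\si_i^t(\cyti)}+r^{\si_i^s(\cyti)}]\,\der x+\cdots\big)$ via the analogue of~(\ref{inegalite-sigma-3}): the relevant controlled-path norm is $\cq^{\ga,\ga(1+\ka)}$, so the argument of $\Lambda$ sits in $\cZ\cac_3^{\ga(\ka+2)}$, and this is where the hypothesis $\ga(\ka+2)>1$ is genuinely used.
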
 

As in the Young case, the result will stem from a contraction argument (Proposition \ref{contraction-property-rough}) on some invariant ball (Proposition \ref{boule-invariante-rough}). Before we turn to detail these arguments, let us state an equivalent of Lemma \ref{estimation-sigma}:
\begin{lemma}\label{estimation-sigma-rough}
Let $(y,y'),(\yti,\yti') \in \cq^\ga([0,T];\R^{1,d})$ such that $y_0=\yti_0$ and $y'_0=\yti'_0$. Then, under the hypothesis of Theorem \ref{theo-cas-rough}, for any $s,t \in [0,T]$,
\begin{equation}\label{estimation-terme-retard-1}
\cn[[\si_i^t-\si_i^s](\cy);\cq^\ga([0,T];\R^{1,n})] \leq c_\si \lln t-s \rrn \lcl 1+\cn[y;\cq^\ga([0,T];\R^{1,d})]^2 \rcl,
\end{equation}
the path $\si^t(\cy)-\si^t(\cyti)$ satisfies
\begin{align}\label{estimation-terme-retard-2}
&\cn[\si_i^t(\cy)-\si_i^t(\cyti); \cq^\ga([0,T];\R^{1,d})] \\
&\leq c_\si \lcl 1+\cn[y;\cq^\ga([0,T];\R^{1,d})]^2+\cn[\yti;\cq^\ga([0,T];\R^{1,d})]^2 \rcl \cn[y-\yti;\cq^\ga([0,T];\R^{1,d})], \nonumber
\end{align}
and
\begin{align}\label{estimation-terme-retard-3}
&\cn[[ \si_i^t-\si_i^s ](\cy)-[ \si_i^t-\si_i^s ](\cyti); \cq^{\ga,\ga+\ga\ka}([0,T];\R^{1,d})] 
\leq c_\si \lln t-s \rrn  \\
&\times\lcl 1+\cn[y;\cq^\ga([0,T];\R^{1,d})]^{1+\ka}+\cn[\yti;\cq^\ga([0,T];\R^{1,d})]^{1+\ka} \rcl 
 \cn[y-\yti;\cq^\ga([0,T];\R^{1,d})]. \nonumber
\end{align}
\end{lemma}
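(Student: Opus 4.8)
The plan is to treat each of the three estimates in Lemma \ref{estimation-sigma-rough} by the same mechanism: write the relevant path as a composition $\Phi\circ\cy$ (or as a difference of two such compositions), use the chain-rule-type decomposition of $\der(\Phi^t(\cy))$ into a term carrying $(\der x)$ and a remainder, and then bound each of the four pieces of the $\cq^\ga$ semi-norm ($\cac_1^0$-size, $\cac_1^\ga$-size of the path, $\cac_1^0$- and $\cac_1^{\ga-\eta}$-size of the Gubinelli derivative, and $\cac_2^\eta$-size of the remainder) separately. The essential inputs are Proposition \ref{stabilite-sigma} (stability of controlled paths under a $\cac^{2,\textbf{\textit{b}}}$ map, with the quadratic bound), the algebraic rules of Proposition \ref{difrul}, and elementary interpolation of H\"older norms on $[0,T]$ together with the observation that $\cy_t-\cy_s = (t-s, (\der y)_{st})$ so that the ``time component'' only ever contributes $|t-s|$ (hence smallness) and never blows up a constant.

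\textbf{Proof of \eqref{estimation-terme-retard-1}.} First I would introduce $g^{t,s}(\tau,z):=\si_i(t,\tau,z)-\si_i(s,\tau,z)$; by Taylor expansion in the first variable, $g^{t,s}=(t-s)\,\tilde g^{t,s}$ where $\tilde g^{t,s}(\tau,z)=\int_0^1 D_1\si_i(s+\theta(t-s),\tau,z)\,d\theta$ is, uniformly in $s,t$, an element of $\cac^{2,\textbf{\textit{b}}}([0,T]^2\times\R^{1,d};\R^{1,n})$ because $\si\in\cac^{3,\textbf{\textit{b}},\ka}$ — this is where one derivative of the three available is spent. Then $[\si_i^t-\si_i^s](\cy)=(t-s)\,\tilde g^{t,s}(\cy)$, and applying Proposition \ref{stabilite-sigma} to the fixed coefficient $\tilde g^{t,s}$ gives $\cn[\tilde g^{t,s}(\cy);\cq^\ga]\le c_\si\{1+\cn[y;\cq^\ga]^2\}$ with $c_\si$ uniform in $s,t$; multiplying by the scalar $(t-s)$ yields \eqref{estimation-terme-retard-1} directly. (The only subtlety is that Proposition \ref{stabilite-sigma} as stated concerns $\si^t(\cy)$; one checks its proof applies verbatim to any $\cac^{2,\textbf{\textit{b}}}$ coefficient — or one simply invokes its statement with the roles of the variables relabelled.)

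\textbf{Proof of \eqref{estimation-terme-retard-2}.} Write $\si_i^t(\cy)-\si_i^t(\cyti)$ using a first-order Taylor expansion along the segment $\cy^\theta:=\theta\cy+(1-\theta)\cyti$: namely $\si_i^t(\cy)-\si_i^t(\cyti)=\int_0^1 D_3\si_i(t,\cdot,\cy^\theta_\cdot)\,d\theta\cdot(y-\yti)$, but since the time components of $\cy$ and $\cyti$ coincide this is really $\int_0^1 D_3\si_i^t(\cy^\theta)(y-\yti)\,d\theta$. The controlled-path structure of the integrand $D_3\si_i^t(\cy^\theta)$ comes from Proposition \ref{stabilite-sigma} applied to $D_3\si_i\in\cac^{2,\textbf{\textit{b}}}$ (using the second derivative, and here the $\ka$-H\"older modulus of $D^{(3)}\si$ enters), giving $\cn[D_3\si_i^t(\cy^\theta);\cq^\ga]\le c_\si\{1+\cn[y;\cq^\ga]^2+\cn[\yti;\cq^\ga]^2\}$ uniformly in $\theta,t$; and $(y-\yti,(y-\yti)')=(y,y')-(\yti,\yti')\in\cq^\ga$ with $\cn[y-\yti;\cq^\ga]$ as its semi-norm. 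It then remains to show that the pointwise product (i.e. matrix-vector multiplication) of two $\cq^\ga$-controlled paths is again a $\cq^\ga$-controlled path with semi-norm bounded by $c(1+\cdots)$ times the product of the two semi-norms; this is a Leibniz-rule computation using Proposition \ref{difrul} — the product path has Gubinelli derivative $(\der\!\text{ of first factor})'\cdot(\text{second})+(\text{first})\cdot(\text{second})'$ and a remainder made of the two remainders plus a $r\cdot(\der x)$ cross-term and a $(\der x)(\der x)$-type term that is $2\ga$-H\"older. Combining these two bounds gives \eqref{estimation-terme-retard-2}. Because the multiplicative constant on the product lemma may grow with $\cn[y-\yti;\cq^\ga]$ through the $y,\yti$ dependence, one has to be a little careful to keep the dependence on $\cn[y-\yti]$ \emph{linear}; this is arranged because in the product only one factor ($y-\yti$) is ``differenced'' and the other ($D_3\si$) is bounded purely in terms of $\cn[y],\cn[\yti]$.

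\textbf{Proof of \eqref{estimation-terme-retard-3} and the main obstacle.} Here one combines the two devices: $[\si_i^t-\si_i^s](\cy)-[\si_i^t-\si_i^s](\cyti)=(t-s)\bigl(\tilde g^{t,s}(\cy)-\tilde g^{t,s}(\cyti)\bigr)$, so the factor $(t-s)$ is free, and one must estimate the difference $\tilde g^{t,s}(\cy)-\tilde g^{t,s}(\cyti)$ in the \emph{weaker} norm $\cq^{\ga,\ga+\ga\ka}$, which is what allows the right-hand side to carry only $\cn[y;\cq^\ga]^{1+\ka}$ rather than a quadratic term. The reason one drops to the exponent $\eta=\ga+\ga\ka$: when one differentiates $\tilde g^{t,s}(\cy)-\tilde g^{t,s}(\cyti)$ one meets terms of the form $\bigl(D\tilde g(\cy)-D\tilde g(\cyti)\bigr)$ and, since only $D^{(2)}\tilde g$ (equivalently $D^{(3)}\si$) is merely $\ka$-H\"older, the natural modulus of continuity one gets is of order $\|\cy-\cyti\|_\infty^\ka\lesssim\cn[y-\yti;\cac_1^0]^\ka$, i.e. one only gains a $\ga\ka$-H\"older exponent from the second-order remainder, not a full $\ga$; this forces the target space $\cq^{\ga,\ga+\ga\ka}$ and the condition $\ga(\ka+2)>1$ (equivalently $\eta+\ga>1$) is exactly what makes $\laa$ applicable in the downstream fixed-point estimates. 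The concrete bound is obtained by writing $\tilde g^{t,s}(\cy)-\tilde g^{t,s}(\cyti)=\int_0^1 D_3\tilde g^{t,s}(\cy^\theta)(y-\yti)\,d\theta$ as in \eqref{estimation-terme-retard-2}, but now estimating the controlled-path semi-norm of $D_3\tilde g^{t,s}(\cy^\theta)$ in $\cq^{\ga,\ga+\ga\ka}$ — which by an adaptation of Proposition \ref{stabilite-sigma} costs only $\cn[y]^{1+\ka}$-type bounds because one uses the $\ka$-H\"older modulus of $D^{(3)}\si$ rather than a further derivative — and then invoking the product lemma, now in the form ``$\cq^{\ga,\ga+\ga\ka}\times\cq^\ga\to\cq^{\ga,\ga+\ga\ka}$'', to multiply by $y-\yti\in\cq^\ga$. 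I expect the main obstacle to be precisely this bookkeeping in the weaker space: one must verify carefully that Proposition \ref{stabilite-sigma}'s proof degrades gracefully to the $\cq^{\ga,\ga+\ga\ka}$-setting producing the exponent $1+\ka$ (and not $2$), and that the product of a $\cq^{\ga,\eta}$ path with a $\cq^\ga$ path lands in $\cq^{\ga,\eta}$ with the correct control on the remainder's $\eta$-H\"older norm (the cross-term $r\,(\der x)$ being $(\eta+\ga)$-H\"older, hence $\ge\eta$-H\"older, is the point). All the individual inequalities are routine applications of H\"older-norm multiplicativity, \eqref{contraction}, and interpolation on $[0,T]$; the details are deferred to the Appendix.

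\medskip
\noindent\emph{(The complete computations are carried out in the Appendix.)}
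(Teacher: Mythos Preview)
Your proposal is correct and essentially complete modulo the routine verifications you flag; the route, however, differs from the paper's. The paper proves \eqref{estimation-terme-retard-1} and \eqref{estimation-terme-retard-3} by direct computation: it writes down explicitly the Gubinelli derivative $[\si_i^t-\si_i^s](\cy)'=D_3(\si_i^t-\si_i^s)(\cy)\circ y'$ and the associated remainder, and bounds each piece using $\|D^k(\si^t-\si^s)\|_\infty\le\|D^{k+1}\si\|_\infty\,|t-s|$; for \eqref{estimation-terme-retard-3} it splits the difference of derivatives into four elementary terms and the difference of remainders into several more, invoking the Young-case Lemma~\ref{estimation-sigma} (specifically \eqref{inegalite-sigma-3}) applied to $D_3\si_i$ to handle the double-increment pieces. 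For \eqref{estimation-terme-retard-2} the paper simply cites \cite[Proposition~4]{Gu}. Your approach is more modular: you factor $\si^t-\si^s=(t-s)\tilde g^{t,s}$ once and for all, so that \eqref{estimation-terme-retard-1} becomes a single application of Proposition~\ref{stabilite-sigma}, and for \eqref{estimation-terme-retard-2}--\eqref{estimation-terme-retard-3} you reduce to an integral representation multiplied by $(y-\yti)$, handled via a controlled-path product lemma together with a $\cq^{\ga,\ga+\ga\ka}$-variant of Proposition~\ref{stabilite-sigma} valid for $\cac^{1,\textbf{\textit{b}},\ka}$ coefficients. The paper's approach avoids stating and proving these two auxiliary lemmas (neither appears in the paper) at the cost of longer hands-on estimates; your approach front-loads them but then makes the three bounds almost mechanical. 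Both exploit the initial conditions $y_0=\yti_0$, $y'_0=\yti'_0$ in the same way, to keep the dependence on $\cn[y-\yti;\cq^\ga]$ linear.
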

\begin{proof}
See Appendix.

\end{proof}

We can now state the result concerning the invariance of a ball for the map $\Gamma$:
\begin{proposition}[Invariance of a ball]\label{boule-invariante-rough}
Under the hypothesis of Theorem \ref{theo-cas-rough}, there exists $T_0 \in (0,T]$ such that for each $T_1 \in (0,T_0]$, the ball
$$B_{T_1}^{A_{T_1}}=\{ (y,y') \in \cq^\ga([0,T_1]): \ y_0=a, \ y'_0=\si(0,0,a)^\ast, \ \cn[(y,y');\cq^\ga([0,T_1])] \leq A_{T_1} \}$$
is invariant by $\Gamma$ (defined by (\ref{gamma-rough})) for some large enough radius $A_{T_1}$.
\end{proposition}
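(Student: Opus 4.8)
The plan is to mimic the invariance-of-a-ball argument from the Young case (Proposition \ref{prop:3.6}, Step 1), but now keeping track of the full $\cq^\ga$-seminorm, i.e. of $y$, $y'$, $y'$ in its H\"older norm, and the remainder $r^y$. Fix a time $T_1\in(0,T]$ to be chosen later and a path $(y,y')\in\cq^\ga([0,T_1])$ with $y_0=a$, $y'_0=\si(0,0,a)^\ast$, and set $(z,z')=\Gamma(y,y')$, so that $z'=\si(.,.,y_.)^\ast$ and, for each line $i$,
$$
(\der z)^{(i)}_{st}=\cj_{st}(\si_i^t(\cy)\,dx)+\cj_{0s}([\si^t_i-\si^s_i](\cy)\,dx).
$$
First I would record, using Proposition \ref{stabilite-sigma}, that $z'_0=\si(0,0,a)^\ast$ and
$$
\cn[z';\cac_1^0]+\cn[z';\cac_1^{\ga-2\ga}]\le c_\si\lcl 1+\cn[y;\cq^\ga([0,T_1])]^2\rcl,
$$
so the "level-one" components of $z$ are controlled without any smallness; the gain in $T_1$ must come entirely from the remainder $r^z$.

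Next I would estimate $\cn[r^z;\cac_2^{2\ga}([0,T_1])]$ by splitting $r^z_{st}$ exactly as in the discussion preceding the theorem into the "diffusion-type" contributions
$$
[\si_i(t,s,y_s)-\si_i(s,s,y_s)](\der x)_{st},\quad \si_i^t(\cy)'_s\cdot x^2_{st},\quad \laa_{st}\big(r^{\si_i^t(\cy)}\der x+\der(\si_i^t(\cy)')\cdot x^2\big)
$$
and the genuinely Volterra term $\cj_{0s}([\si(t,.,y_.)-\si(s,.,y_.)]\,dx)^{(i)}$. For the first term, use that $\si$ is Lipschitz in its first variable and $x\in\cac_1^\ga$ to get a factor $\lln t-s\rrn\,\lln t-s\rrn^\ga\le T_1^{1-\ga}\lln t-s\rrn^{2\ga}$; for the second and third, use the boundedness of the Levy area ($x^2\in\cac_2^{2\ga}$) together with Theorem \ref{prop:Lambda} and Proposition \ref{stabilite-sigma}, which immediately produce the required $2\ga$-H\"older exponent and a bound $c_{\si,x}(1+\cn[y;\cq^\ga]^2)$ (with a harmless $T_1$ power). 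The delicate term is $\cj_{0s}([\si^t-\si^s](\cy)\,dx)$: here I would apply Proposition \ref{prop:integration-controlled-paths} to the controlled path $[\si_i^t-\si_i^s](\cy)$, whose $\cq^\ga([0,T_1])$-norm is bounded, via (\ref{estimation-terme-retard-1}), by $c_\si\lln t-s\rrn\lcl 1+\cn[y;\cq^\ga([0,T_1])]^2\rcl$. The integral $\cj_{0s}$ is then bounded through (\ref{lien-normes})-type estimates and the $\ga$-H\"older bound for $\cj(\cdot\,dx)$ by $c_x T_1^\ga$ times that $\cq^\ga$-norm, yielding a contribution $\le c_{\si,x}T_1^\ga\lln t-s\rrn\lcl 1+\cn[y;\cq^\ga([0,T_1])]^2\rcl\le c_{\si,x}T_1^{1+\ga-2\ga}\lln t-s\rrn^{2\ga}\lcl 1+\cn[y;\cq^\ga([0,T_1])]^2\rcl$, where the $\lln t-s\rrn^{1-2\ga}\le T_1^{1-2\ga}$ conversion uses $2\ga<1$. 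Collecting all four pieces gives
$$
\cn[r^z;\cac_2^{2\ga}([0,T_1])]\le c_{\si,x}\lcl 1+T_1^{\theta}\cn[y;\cq^\ga([0,T_1])]^2\rcl
$$
for some explicit $\theta>0$ (e.g. $\theta=\min(\ga,1-2\ga)$).

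Finally I would assemble the $\cq^\ga$-seminorm. Combining the level-one bounds (no $T_1$ gain) with the remainder bound (genuine $T_1^\theta$ gain) and using (\ref{lien-normes}) to pass from $\cn[z;\cac_1^\ga]$ to the full seminorm, one obtains
$$
\cn[(z,z');\cq^\ga([0,T_1])]\le c_{\si,x}^0\lcl 1+T_1^{\theta}\cn[(y,y');\cq^\ga([0,T_1])]^2\rcl .
$$
Because of the quadratic dependence on the right-hand side, the invariant ball cannot have arbitrarily large radius: I would choose the radius $A_{T_1}$ so that $c_{\si,x}^0(1+T_1^\theta A_{T_1}^2)\le A_{T_1}$, which is solvable precisely when $4c_{\si,x}^0{}^2 T_1^\theta\le 1$, e.g. $A_{T_1}=2c_{\si,x}^0$ and $T_1$ small enough (and then shrink further to get the full $T_0$ of the statement, so that for every $T_1\in(0,T_0]$ the corresponding ball is stable). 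One must also check the boundary conditions $y_0=a$, $y'_0=\si(0,0,a)^\ast$ are preserved by $\Gamma$, which is immediate from $z_0=a$ and $z'=\si(.,.,y_.)^\ast$. The main obstacle is the bookkeeping in the remainder estimate: one has to verify that every one of the four pieces of $r^z$ really carries a strictly positive power of $T_1$ \emph{and} simultaneously the $2\ga$-H\"older regularity, which is where the hypothesis $\ga(\ka+2)>1$ (needed to apply $\laa$ to $\der(\si_i^t(\cy)')\cdot x^2$, an increment of order $(\eta-\ga)+2\ga=\ga+\ga\ka$ when $\si$ has the $\ka$-H\"older third derivative) enters; this exponent must exceed $1$ for Theorem \ref{prop:Lambda} to apply, and the same condition is what makes the quadratic fixed-point scheme close. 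I do not expect the constants to depend on $t$, thanks to the uniformity in $t$ already built into Proposition \ref{stabilite-sigma} and Lemma \ref{estimation-sigma-rough}.
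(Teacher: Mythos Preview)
Your overall scheme mirrors the paper's proof closely---the same decomposition of $r^z$, the same use of Proposition \ref{stabilite-sigma} and Lemma \ref{estimation-sigma-rough}, and the same quadratic fixed-point inequality at the end. However, there is a genuine inconsistency in your bookkeeping of where the small factor $T_1^\theta$ comes from. You assert that the level-one terms $\cn[z';\cac_1^0]$ and $\cn[z';\cac_1^\ga]$ carry \emph{no} smallness in $T_1$ and that the whole gain lives in the remainder. But if $\cn[z';\cac_1^\ga]\le c_\si(1+\cn[y;\cq^\ga]^2)$ with no $T_1$ factor, then this summand alone already prevents your claimed final inequality $\cn[z;\cq^\ga]\le c_{\si,x}^0(1+T_1^\theta\cn[y;\cq^\ga]^2)$ from holding, and the quadratic ball argument fails for every $T_1$. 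In the paper this is resolved precisely by exploiting the fixed initial datum $y'_0=\si(0,0,a)^\ast$: writing $(\der z')_{st}$ as $[\si(t,t,y_t)-\si(s,t,y_t)]+\der(\si^s(\cy))_{st}$ and invoking (\ref{lien-normes}) on the second piece gives $\cn[z';\cac_1^\ga]\le c_{x,\si}\{1+T_1^\ga\cn[y;\cq^\ga]^2\}$. The same mechanism (bounding $\|\si_i^t(\cy)'_s\|$ by its value at $0$ plus $T_1^\ga\cn[\cdot;\cac_1^\ga]$) is what produces the ``harmless $T_1$ power'' for the piece $\si_i^t(\cy)'_s\cdot x^2_{st}$; it is not automatic. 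Note also that Proposition \ref{stabilite-sigma} concerns $u\mapsto\si_i(t,u,y_u)$ with the first variable frozen and does not directly bound $z'_u=\si(u,u,y_u)^\ast$; the short extra splitting just mentioned is needed.

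A secondary correction: the hypothesis $\ga(\ka+2)>1$ is \emph{not} used in the invariance step. All applications of $\laa$ here involve increments of total order $3\ga>1$, since $(\si_i^t(\cy))'\in\cac_1^\ga$ by Proposition \ref{stabilite-sigma} (not $\cac_1^{\ga\ka}$). The constraint $\ga(\ka+2)>1$ only enters in the contraction argument (Proposition \ref{contraction-property-rough}), where differences are controlled via (\ref{estimation-terme-retard-3}) in the weaker space $\cq^{\ga,\ga(1+\ka)}$.
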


\begin{proof}
Fix a time $T_0 \leq T$ and let $(y,y')\in B_{T_0}^{A_{T_0}}$ with decomposition $\der y=y'\der x +r^y$. Set $(z,z')=\Gamma(y,y')$. Then $\der z=z' \der x+r^z$, where $r^z$ can be further decomposed into:
\begin{equation}\label{decomposition-reste}
r^z=r^{z,0}+r^{z,1,1}+r^{z,1,2}+r^{z,2,1}+r^{z,2,2},
\end{equation}
with
\begin{eqnarray*}
r^{z,0,(i)}_{st}&=&\lc \si^t_i-\si^s_i \rc (\cy_s)(\der x)_{st}, \qquad  
r^{z,1,1,(i)}_{st}=\si_i^t(\cy)'_s \cdot x^2_{st}  \\
r^{z,1,2,(i)}_{st}&=&\laa_{st}\big(r^{\si_i^t(\cy)} \der x+\der (\si_i^t(\cy)') \cdot x^2 \big),
\end{eqnarray*}
and 
\bean
r^{z,2,1,(i)}_{st}&=&\lc \si_i^t-\si_i^s \rc (\cy_0) (\der x )_{0s}+  \lc \si^t_i-\si^s_i \rc (\cy) '_0  \cdot x^2_{0s}  \\
r^{z,2,2,(i)}_{st}&=&\laa_{0s} \lp \lc r^{\si_i^t(\cy)}-r^{\si_i^s(\cy)} \rc \der x+\der(\lc \si_i^t-\si_i^s \rc (\cy)') \cdot x^2 \rp.
\eean

Let us check that this decomposition actually identifies $z$ as an element of $\cq^\ga$, that is $z' \in \cac_1^\ga$ and $r^z \in \cac_2^{2\ga}$. For $z'$, pick $0\leq s <t \leq T_1$ and observe that
\bean
\norm{(\der z')_{st}} & =& \norm{\si(t,t,y_t)^\ast-\si(s,s,y_s)^\ast }\\
&\leq & \norm{ \si(t,t,y_t)^\ast-\si(s,t,y_t)^\ast }+\norm{\si(s,t,y_t)^\ast-\si(s,s,y_s)^\ast }\\
&\leq & \norm{D\si }_\infty \lln t-s \rrn+\sum_{i=1}^d \norm{ \der( \si_i^s(\cy))_{st} }.
\eean
But, according to (\ref{lien-normes}),
\begin{multline*}
\norm{ \der(\si_i^s(\cy))_{st} }  \leq  c_x \lln t-s \rrn^\ga \lcl \norm{D_3\si_i(s,\cy_0) \circ y'_0 }+T_0^\ga \cn[\si_i^s(\cy); \cq^\ga]\rcl \\
\leq c_{x,\si} \lln t-s \rrn^\ga \lcl 1+T_0^\ga \cn[\si_i^s(\cy); \cq^\ga]\rcl,
\end{multline*}
which, together with (\ref{eq:stabilite-sigma}), leads to $\cn[z'; \cac_1^\ga] \leq c_{x,\si} \lcl 1+T_0^\ga \cn[y;\cq^\ga]^2 \rcl$.

\smallskip

Let us now estimate the $2\ga$-Hölder norm of the remaining terms.

\smallskip

\noindent
\textit{Case of $r^{z,0}$:} Clearly, $\cn[r^{z,0};\cac_2^{2\ga}] \leq \norm{D\si}_\infty \cn[x;\cac_1^\ga] T_0^{1-\ga} \leq c_{\si,x}$.

\smallskip

\noindent
\textit{Case of $r^{z,1,1}$:} Since $\norm{\si_i^t(\cy)'_0}=\norm{D_3\si_i(t,\cy_0) \circ y_0' } \leq c_\si$, one has, owing to (\ref{eq:stabilite-sigma}),
\bean
\norm{r^{z,1,1,(i)}_{st} } &\leq & c_\si \lln t-s \rrn^{2\ga} \cn[x^2;\cac_2^{2\ga}] \lcl 1+T_0^\ga \cn[\si_i^t(\cy)';\cac_1^\ga] \rcl\\
&\leq & c_{\si,x} \lln t-s \rrn^{2\ga} \lcl 1+T_0^\ga \cn[\si_i^t(\cy);\cq^\ga]\rcl \ \leq \ c_{\si,x} \lln t-s \rrn^{2\ga} \lcl 1+T_0^\ga \cn[y;\cq^\ga]^2 \rcl . 
\eean

\smallskip

\noindent
\textit{Case of $r^{z,1,2}$:} It is readily checked, invoking (\ref{contraction}) and (\ref{eq:stabilite-sigma}), that
\bean
\norm{r_{st}^{z,1,2,(i)}} &\leq & c \lln t-s \rrn^{3\ga}  \lcl \cn[r^{\si_i^t(\cy)};\cac_2^{2\ga}] \cn[x;\cac_1^\ga]+\cn[(\si_i^t(\cy))';\cac_1^\ga] \cn[x^2;\cac_2^{2\ga}] \rcl\\
&\leq & c_x \lln t-s \rrn^{3\ga}  \cn[\si_i(t,\cy);\cq^\ga ] \ \leq \ c_{x,\si} \lln t-s \rrn^{2\ga} T_0^\ga \lcl 1+\cn[y;\cq^\ga]^2 \rcl.
\eean

\smallskip

\noindent
\textit{Case of $r^{z,2,1}$:} The following elementary estimates hold true.
\begin{eqnarray*}
\norm{r^{z,2,1,(i)}_{st} } &\leq& \norm{D\si_i }_\infty \lln t-s \rrn T_0^\ga \cn[x;\cac_1^\ga]+ \norm{ D_3\si_i(t,\cy_0)-D_3\si_i(s,\cy_0)} \norm{y'_0 } \cn[x^2;\cac_2^{2\ga}] T_0^{2\ga}\\
&\leq& c_{x,\si} \lln t-s \rrn^{2\ga}.
\end{eqnarray*}

\smallskip

\noindent
\textit{Case of $r^{z,2,2}$:} Owing to (\ref{contraction}) and (\ref{estimation-terme-retard-1}), we have
\bean
\lefteqn{\norm{r^{z,2,2,(i)}_{st}}}\\
 & \leq & c \, T_0^{3\ga} \lcl \cn[r^{\si_i^t(\cy)}-r^{\si_i^s(\cy)};\cac_2^{2\ga}] \cn[x;\cac_1^\ga]+\cn[ (\lc \si_i^t-\si_i^s\rc (\cy))';\cac_1^\ga] \cn[x^2;\cac_2^{2\ga}]\rcl\\
&\leq & c_x \, T_0^{3\ga}  \cn[[\si_i^t-\si_i^s](\cy);\cq^\ga] \ \leq \ c_{x,\si} \, T_0^{3\ga} \lln t-s \rrn \lcl 1+\cn[y;\cq^\ga]^2 \rcl .
\eean

\smallskip

Finally, gathering all our estimates for the terms in (\ref{decomposition-reste}), it is easily seen that $\cn[r^z;\cac_2^{2\ga}]$  $\leq c_{\si,x} \lcl 1+T_0^\ga \cn[y;\cq^\ga]^2 \rcl$. Hence we have obtained that $r^z \in \cac_2^{2\ga}$ and $(z,z')\in \cq^\ga$. 

\smallskip

Notice that the above estimations also easily lead to $\cn[z;\cq^\ga] \leq c_{x,\si} \lcl 1+T_0^\ga \cn[y;\cq^\ga]^2 \rcl$. Choose now for $T_0$ the greatest time $\tau \in (0,T]$ such that the equation $c_{\si,x}\lcl 1+\tau^\ga A \rcl =A$ admits a unique solution $A_{\tau}$.  Then $T_0$ satisfies the property announced in our proposition.

\end{proof}

\smallskip

We can now prove the contraction property allowing to establish the existence and uniqueness of a local solution to equation (\ref{eq:volterra-young}).
\begin{proposition}[Contraction property]\label{contraction-property-rough}
Under the hypothesis of Theorem \ref{theo-cas-rough}, there exists $T_1 \in (0,T_0]$ such that for each $T_2 <T_1$, the application $\Gamma$ is a strict contraction on the (stable) ball $B_{T_2}^{A_{T_2}}$.
\end{proposition}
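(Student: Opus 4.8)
The plan is to reproduce, in the controlled-path setting, the contraction computation performed in Step~2 of the proof of Proposition~\ref{prop:3.6}, now with Lemma~\ref{estimation-sigma-rough} playing the role of Lemma~\ref{estimation-sigma} and with the full $\cq^\ga$ semi-norm replacing the plain H\"older norm. Fix $T_2 \in (0,T_0]$, take $(y,y'),(\yti,\yti') \in B_{T_2}^{A_{T_2}}$ --- so that $y_0 = \yti_0 = a$ and $y'_0 = \yti'_0 = \si(0,0,a)^\ast$ --- and set $(z,z') = \Gamma(y,y')$, $(\zti,\zti') = \Gamma(\yti,\yti')$, with decompositions $\der z = z'\,\der x + r^z$ and $\der\zti = \zti'\,\der x + r^{\zti}$. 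The aim is to estimate separately the four constituents of $\cn[(z,z')-(\zti,\zti');\cq^\ga([0,T_2])]$, each time producing a strictly positive power of $T_2$ times a polynomial in $A_{T_2}$ times $\cn[(y,y')-(\yti,\yti');\cq^\ga([0,T_2])]$.

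For the derivative part, $(z'-\zti')_s = \si(s,s,y_s)^\ast - \si(s,s,\yti_s)^\ast$ vanishes at $s = 0$; splitting it exactly as $z'$ was treated in Proposition~\ref{boule-invariante-rough} (a term controlled by $\norm{D\si}_\infty\lln t-s\rrn\,\cn[y-\yti;\cac_1^0]$ plus increments of $\si_i^s(\cy)-\si_i^s(\cyti)$), and invoking (\ref{estimation-terme-retard-2}) together with (\ref{lien-normes}) and the fact that the initial derivative $D_3\si_i(s,\cy_0)\circ y'_0 - D_3\si_i(s,\cyti_0)\circ \yti'_0$ is zero, yields $\cn[z'-\zti';\cac_1^\ga([0,T_2])] \le c_{\si,x} T_2^{\ga}\lcl 1+\cn[y;\cq^\ga]^2+\cn[\yti;\cq^\ga]^2\rcl\cn[y-\yti;\cq^\ga]$, hence the same kind of bound for $\cn[z'-\zti';\cac_1^0([0,T_2])]$. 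For the remainder I would decompose $r^z - r^{\zti}$ into the five pieces corresponding to (\ref{decomposition-reste}): the piece coming from $r^{z,2,1}$ is identically zero, since all its constituents are frozen at $0$ where the initial data agree; the pieces coming from $r^{z,0}$, $r^{z,1,1}$ and $r^{z,1,2}$ are handled by $\norm{D\si}$-type bounds together with (\ref{estimation-terme-retard-2}) and, for the last one, the contraction inequality~(\ref{contraction}) for $\laa$, each producing a positive power of $T_2$; and the piece coming from $r^{z,2,2}$ is estimated via~(\ref{contraction}) and the finer estimate~(\ref{estimation-terme-retard-3}) in $\cq^{\ga,\ga+\ga\ka}$. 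Collecting these gives $\cn[r^z-r^{\zti};\cac_2^{2\ga}([0,T_2])] \le c_{\si,x}\, T_2^{\ga}\lcl 1+\cn[y;\cq^\ga]^{1+\ka}+\cn[\yti;\cq^\ga]^{1+\ka}\rcl\cn[y-\yti;\cq^\ga]$, after which $\cn[z-\zti;\cac_1^\ga([0,T_2])]$ follows from $\der(z-\zti) = (z'-\zti')\der x + (r^z-r^{\zti})$ and $(z-\zti)_0 = 0$.

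Summing the four bounds and using that, by the defining relation $c_{\si,x}\lcl 1+\tau^\ga A\rcl = A$ in Proposition~\ref{boule-invariante-rough}, the radius $A_{T_2}$ stays bounded by $A_{T_0}$ as $T_2$ decreases, one reaches an inequality of the form
\begin{equation*}
\cn[\Gamma(y,y')-\Gamma(\yti,\yti');\cq^\ga([0,T_2])] \le C\, T_2^{\ga}\, \cn[(y,y')-(\yti,\yti');\cq^\ga([0,T_2])],
\end{equation*}
with $C$ depending only on $\si$, $x$ and $T_0$. It then suffices to pick $T_1 \in (0,T_0]$ with $C\, T_1^{\ga} \le 1/2$: for every $T_2 < T_1$ the map $\Gamma$ is a strict contraction on $B_{T_2}^{A_{T_2}}$, which is stable by Proposition~\ref{boule-invariante-rough}. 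I expect the main obstacle to be the estimate of the $r^{z,2,2}-r^{\zti,2,2}$ contribution: applying $\laa$ there is licit only because the increment $\lc r^{\si_i^t(\cy)}-r^{\si_i^s(\cy)}\rc\der x + \der(\lc\si_i^t-\si_i^s\rc(\cy)')\cdot x^2$, once its $\cyti$-counterpart is subtracted, lies in $\cac_3^{1+}$, which is exactly what the regularity budget $\ga(\ka+2)>1$ of Theorem~\ref{theo-cas-rough} secures; beyond that, the careful bookkeeping of which boundary terms at $0$ vanish --- so that every contribution genuinely carries a positive power of $T_2$ --- is the delicate point.
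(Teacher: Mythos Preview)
Your proposal is correct and follows essentially the same approach as the paper: the same four-term decomposition of the $\cq^\ga$ semi-norm, the same five-piece splitting of the remainder inherited from (\ref{decomposition-reste}) with the $r^{z,2,1}$ contribution vanishing by equality of initial data, and the same appeals to (\ref{estimation-terme-retard-2}), (\ref{estimation-terme-retard-3}) and (\ref{contraction}). Your explicit remark that $A_{T_2}\le A_{T_0}$ (so that the contraction constant can be made uniform in $T_2$) and your observation that the hypothesis $\ga(\ka+2)>1$ is precisely what makes $\laa$ applicable to the $r^{z,2,2}$ difference are both implicit in the paper's argument and worth stating.
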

\begin{proof}
Let $(y,y'),(\yti,\yti')$ two elements of $B_{T_1}^{A_{T_1}}$, and set $(z,z')=\Gamma(y,y')$, $(\zti,\zti')=\Gamma(\yti,\yti')$. Thus, $\der(z-\zti)=(z'-\zti')\der x+(r^z-r^{\zti})$, where $z'=\si(.,.,y_.)^\ast$, $\zti'=\si(.,.,\yti_.)^\ast$, and $r^z$ is given by (\ref{decomposition-reste}), with a similar expression for $r^{\zti}$. Let us now estimate each term of 
$$\cn[z-\zti;\cq^\ga]=\cn[z'-\zti';\cac_1^0]+\cn[z'-\zti';\cac_1^\ga]+\cn[r^z-r^{\zti};\cac_2^{2\ga}]+\cn[z-\zti;\cac_1^\ga].$$

\smallskip

\noindent
\textit{Case of $\cn[z'-\zti';\cac_1^0]$:} If $s\in [0,T_1]$, $\norm{z'_s-\zti'_s }=\norm{\si(s,s,y_s)^\ast-\si(s,s,\yti_s)^\ast } \leq \norm{D\si }_\infty \norm{y_s-\yti_s}$.
But $y_0=\yti_0$, so that $\norm{y_s-\yti_s } \leq T_1^\ga \cn[y-\yti;\cac_1^\ga]$ and $\cn[z'-\zti';\cac_1^0] \leq c_\si T_1^\ga \cn[y-\yti;\cq^\ga]$.

\smallskip

\noindent
\textit{Case of $\cn[z'-\zti';\cac_1^\ga]$:} Pick $0\leq s <t \leq T_1$ and observe that
\bean
\norm{(z'_t-\zti'_t)-(z'_s-\zti'_s)} &=& \norm{(\si(t,\cy_t)^\ast-\si(t,\cyti_t)^\ast-\si(s,\cy_s)^\ast+\si(s,\cyti_s)^\ast }\\
&\leq & \norm{[\si^t-\si^s](\cy_t)-[\si^t-\si^s](\cy_t)}+\norm{ \der(  \si^s(\cy)-\si^s(\cyti))_{st}}.
\eean
Then
\bean
\norm{[\si^t-\si^s](\cy_t)-[\si^t-\si^s](\cy_t)} & \leq & \norm{D(\si^t-\si^s)}_\infty \norm{y_t-\yti_t}\\
&\leq & \norm{D^2\si}_\infty \lln t-s \rrn \cn[y-\yti;\cac_1^\ga] T_1^\ga \\
& \leq & c_\si \lln t-s \rrn^\ga \cn[y-\yti;\cq^\ga] T_1,
\eean
while, according to (\ref{lien-normes}) and (\ref{estimation-terme-retard-2}),
\bean
\norm{ \der(  \si_i^s(\cy)-\si_i^s(\cyti))_{st}}
 &\leq & \lln t-s\rrn^\ga \cn[\si_i^s(\cy)-\si_i^s(\cyti);\cac_1^\ga] \\
 &\leq & c_{x} \lln t-s \rrn^\ga \lcl \norm{(\si_i^s(\cy)-\si_i^s(\cyti))'_0 }+T_1^\ga \cn[\si_i^s(\cy)-\si_i^s(\cyti);\cq^\ga]\rcl\\
 &\leq & c_{x,\si} \lln t-s \rrn^\ga T_1^\ga \lcl 1+\cn[y;\cq^\ga]^2+\cn[\yti;\cq^\ga]^2 \rcl \cn[y-\yti;\cq^\ga]
\eean
since $(\si_i^s(\cy)-\si_i^s(\cyti))'_0=0$. Hence, thanks to the fact that we are working on the invariant ball $B_{T_1}^{A_{T_1}}$, we get $\cn[z'-\zti';\cac_1^\ga] \leq c_{x,\si} \lcl 1+A_{T_1}^2 \rcl \cn[y-\yti;\cq^\ga] T_1^\ga$.

\smallskip

\noindent
\textit{Case of $\cn[r^z-r^{\zti};\cac_2^{2\ga}]$:} Since $(y_0,y'_0)=(\yti_0,\yti'_0)$, $r^{z-\zti}=r^z-r^{\zti}$ reduces to the sum of
\begin{align*}
&r_{st}^{z-\zti,0,(i)}=\{ [\si_i^t-\si_i^s](\cy_s)-[\si_i^t-\si_i^s](\cyti_s) \} (\der x)_{st},
\quad r_{st}^{z-\zti,1,1,(i)}=[\si_i^t(\cy)'_s-\si_i^t(\cyti)'_s] \cdot x^2_{st}  \\
&r_{st}^{z-\zti,1,2,(i)}=\laa_{st}([r^{\si_i^t(\cy)}-r^{\si_i^t(\cyti)} ] \der x+\der(\si_i^t(\cy)'-\si_i^t(\cyti)') \cdot x^2)  \\
&r_{st}^{z-\zti,2,(i)}=\laa_{0s}([r^{\si_i^t(\cy)}-r^{\si_i^s(\cy)}-r^{\si_i^t(\cyti)}+r^{\si_i^s(\cyti)}] \der x +\der([\si_i^t-\si_i^s](\cy)'-[\si_i^t-\si_i^s](\cyti)' \cdot x^2).
\end{align*}
We will now bound each of these terms.

\smallskip

\noindent
\textit{Study of $r^{z-\zti,0}_{st}$:} One has
\bean
\norm{r^{z-\zti,0,(i)}_{st}} &\leq & c_x \lln t-s\rrn^\ga \norm{D(\si_i^t-\si_i^s)}_\infty \norm{ \cy_s-\cyti_s}\\
&\leq & c_x \lln t-s \rrn^{1+\ga} \norm{D^2\si_i }_\infty \norm{y_s-\yti_s}\\
&\leq & c_{x,\si} \lln t-s \rrn^{2\ga} \cn[y-\yti;\cac_1^\ga] T_1^{1-\ga}\ \leq \ c_{x,\si} \lln t-s \rrn^{2\ga} \cn[y-\yti;\cq^\ga] T_1^{1-\ga}.
\eean

\smallskip

\noindent
\textit{Study of $r^{z-\zti,1,1}_{st}$:} Since $(\si^t(\cy)-\si^t(\cyti))'_0=0$, we get, owing to (\ref{estimation-terme-retard-2}),
\bean
\norm{r^{z-\zti,1,1,(i)}_{st}} &\leq & c_x \lln t-s \rrn^{2\ga} \norm{(\si_i^t(\cy)-\si_i^t(\cyti))'_s } \ \leq \ c_x \lln t-s \rrn^{2\ga} \cn[\si_i^t(\cy)-\si_i^t(\cyti);\cq^\ga] T_1^\ga\\
&\leq & c_x \lln t-s \rrn^{2\ga} \lcl 1+\cn[y;\cq^\ga]^2+\cn[\yti;\cq^\ga]^2 \rcl \cn[y-\yti;\cq^\ga] T_1^\ga .
\eean

\smallskip

\noindent
\textit{Study of $r^{z-\zti,1,2}$:} By (\ref{contraction}) and (\ref{estimation-terme-retard-2}),
\bean
\norm{r^{z-\zti,1,2,(i)}_{st}} & \leq & c_x \lln t-s \rrn^{3\ga} \cn[\si_i^t(\cy)-\si_i^t(\cyti);\cq^\ga]\\
&\leq & c_{\si,x} \lln t-s \rrn^{2\ga} \lcl 1+\cn[y;\cq^\ga]^2+\cn[\yti;\cq^\ga]^2 \rcl \cn[y-\yti;\cq^\ga] T_1^\ga.
\eean

\smallskip

\noindent
\textit{Study of $r^{z-\zti,2}$:} By (\ref{contraction}) and (\ref{estimation-terme-retard-3}),
\bean
\norm{r^{z-\zti,2,(i)}_{st}} &\leq & c_x T_1^{\ga(\ka+2)}\cn[[\si_i^t-\si_i^s](\cy)-[\si_i^t-\si_i^s](\cyti);\cq^{\ga,\ga(1+\ka)}]\\
&\leq & c_{x,\si} T_1^{\ga(\ka+2)} \lln t-s \rrn  \lcl 1+\cn[y;\cq^\ga]^{1+\ka}+\cn[\yti;\cq^\ga]^{1+\ka} \rcl \cn[y-\yti;\cq^\ga].
\eean
Finally, putting together all our estimates of the remainder terms, we end up with the relation $\cn[r^z-r^{\zti};\cac_2^{2\ga}] \leq c_{x,\si} \lcl 1+A_{T_1}^2 \rcl \cn[y-\yti;\cq^\ga] T_1^\ga$, which together with the above estimation of $\cn[z'-\zti';\cac_1^\ga]$, gives
$$\cn[z-\zti;\cq^\ga] \leq c_{x,\si} \lcl 1+A_{T_1}^2 \rcl \cn[y-\yti;\cq^\ga ] T_1^\ga.$$
The greatest time $T_1 \in (0,T_0]$ such that $c_{x,\si} \lcl 1+A_{T_1}^2 \rcl T_1^\ga \leq 1/2$ then clearly yields the contraction property for $\Gamma$ on $[0,T_1]$.

\end{proof}

\smallskip

In the rough case, it is also easily seen that our existence and uniqueness result for equation (\ref{eq:volterra-young}) can be applied to the fractional Brownian motion:
\begin{corollary}
Let $B$ be a $n$-dimensional fractional Brownian motion with Hurst parameter $1/3<H\le1/2$, defined on a complete probability space $(\Omega,\cf,P)$. Then almost surely, $B$ fulfills the hypotheses of Theorem \ref{theo-cas-rough}.
\end{corollary}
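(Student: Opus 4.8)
The plan is to check the two requirements that Theorem~\ref{theo-cas-rough} imposes on the driving signal through the setting of Section~\ref{sec:rough-case}: that $B \in \cac_1^\ga([0,T];\R^n)$ for some $\ga \in (1/3,1/2)$, and that $B$ admits a Levy area $x^2 \in \cac_2^{2\ga}([0,T];\R^{n,n})$ satisfying $\der x^2 = \der x \otimes \der x$, i.e.\ Hypothesis~\ref{hyp:x2}. The remaining hypotheses of the theorem (that $\si \in \cac^{3,\textbf{\textit{b}},\ka}([0,T]^2\times\R^d;\R^{d,n})$ with $\ga(\ka+2)>1$ and $a\in\R^{1,d}$) only constrain $\si,\ka,a$, and we will see at the end that they remain satisfiable for the $\ga$ we select.

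First I would fix a H\"older exponent. Since $H>1/3$ and $H\le 1/2$, the interval $(1/3,H)$ is nonempty and contained in $(1/3,1/2)$; pick any $\ga$ in it. Each scalar component $B^i$ is a centered Gaussian process with $\E|B^i_t-B^i_s|^2=|t-s|^{2H}$, whence $\E|B^i_t-B^i_s|^p=c_p|t-s|^{pH}$ for every $p\ge 1$. Choosing $p$ large enough that $pH-1>p\ga$ and applying Kolmogorov's continuity criterion componentwise yields a modification of $B$ whose paths lie in $\cac_1^\ga([0,T];\R^n)$ almost surely. This disposes of the H\"older regularity requirement.

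Second I would construct the Levy area, which is the one non-elementary ingredient. When $1/3<H\le 1/2$ the genuine rough-path lift of $B$ is needed, and I would invoke the classical construction (Coutin--Qian, see also \cite{FV,LQ-bk}): the canonical lifts $x^{2,(m)}$ of the dyadic piecewise-linear interpolations $B^{(m)}$ converge almost surely in $\cac_2^{2\ga}([0,T];\R^{n,n})$ for every $\ga<H$, provided $H>1/4$ --- a fortiori here. The limit $x^2$ belongs to $\cac_2^{2\ga}$ by construction and inherits Chen's relation $\der x^{2}=\der x\otimes\der x$ from the smooth approximations (for which it holds by Proposition~\ref{dissec}); the off-diagonal entries are the iterated integrals $\int(\der B^i)\,dB^j$ built from the independent components, while the symmetric part equals $\tfrac12\,\der B\otimes\der B$. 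Hence Hypothesis~\ref{hyp:x2} holds almost surely.

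Finally, since $\ga>1/3$ we have $1/\ga-2<1$, so one may choose $\ka\in(0,1)$ with $\ga(\ka+2)>1$; for any such $\ka$, any $\si\in\cac^{3,\textbf{\textit{b}},\ka}([0,T]^2\times\R^d;\R^{d,n})$ and any $a\in\R^{1,d}$, all the hypotheses of Theorem~\ref{theo-cas-rough} are then met on the event of probability one on which the two previous points hold, which is what had to be shown. The main obstacle, as indicated, is the existence of $x^2$ with the required $2\ga$-H\"older regularity: unlike the rest of the paper this cannot be obtained by pathwise means and relies on Gaussian estimates for the fractional Brownian motion; everything else is Kolmogorov's criterion together with the algebraic identities of Section~\ref{sec:alg-integration}.
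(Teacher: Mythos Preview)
Your proposal is correct and follows essentially the same approach as the paper: the paper's proof simply observes that one only needs to verify Hypothesis~\ref{hyp:x2} and defers this to the convergence results of Coutin--Qian \cite{CQ}, which is precisely the route you take (with the added, harmless, explicit verification of the H\"older regularity via Kolmogorov's criterion). Your write-up is considerably more detailed than the paper's two-line proof, but the substance is identical.
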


\begin{proof}
We only have to show that $B$ satisfies Hypothesis \ref{hyp:x2}. But this kind of result is easily deduced from the convergence results contained in \cite{CQ}.

\end{proof}

\subsection{Extending the solution}
\label{sec:5.3}

To finish with, let us briefly evoke the technical difficulties we encounter when trying to extend the solution on $[0,T]$ along the same lines as in the Young case. Denote $(y^{(1)},(\yun)')$ the solution on $[0,T_0]$.

\smallskip

The first step would consist in finding some small $\ep >0$, independent of $(\yun,(\yun)')$, and some radius $N_1$ such that the ball
$$\{(y,y') \in \cq^\ga([0,T_0+\ep]): \ (y,y')_{|[0,T_0]}=(\yun,(\yun)'), \ \cn[(y,y');\cq^\ga([0,T_0+\ep])] \leq N_1 \}$$
is invariant by $\Gamma$. In fact, if we set $(z,z')=\Gamma(y,y')$ for $(y,y')$ in this ball, then some standard estimations, similar to those appearing in the proofs above, show that
\begin{equation}\label{tentative-prolongement}
\cn[(z,z');\cq^\ga([0,T_0+\ep])] \leq c_1 \cn[\yun;\cq^\ga([0,T_0])]+c_2 \lcl 1+\ep^\la \cn[(y,y');\cq^\ga([0,T_0+\ep])]^2 \rcl,
\end{equation}
for some $\la >0$ and some constants $c_1,c_2$ with $c_1 >2$. It is then rather clear that, owing to the exponent $2$ in the latter expression, the constant $\ep$ ensuring the stability of the ball has to depend on $\cn[\yun;\cq^\ga([0,T_0])]$.

\smallskip

More specifically, imagine the reasoning of the proof of Proposition \ref{extension-cas-young} remains true when starting with (\ref{tentative-prolongement}), which means that we can find some constant $\ep >0$ and some sequence of radii $(N_i)$ such that 
\beq\label{eq:52}
c_1N_i+c_2 \lcl 1+\ep^\la N_{i+1}^2 \rcl \leq N_{i+1}.
\eeq 
Then $N_{i+1} \geq c_1 N_i \geq 2N_i$ and the sequence $(N_i)$ diverges to infinity. On the other hand, if relation  (\ref{eq:52}) is meant to admit solutions, then the relation $1-4\ep^\la c_2(c_1N_i+c_2) \geq 0$ must be fulfilled, so that $(N_i)$ is bounded, hence a contradiction.

\smallskip

At this point, it is interesting to notice that even if $\ep$ is allowed to vary and becomes a sequence $\ep_i$ such that $\sum_i \ep_i =\infty$ (in order to be sure that $[0,T]$ is covered), then we get $\frac{N_1}{2} 2^i \leq N_i \leq \frac{c}{\ep_{i+1}^\la}$, so that $\ep_i \leq \frac{c}{(2^{1/\la})^i}$, which of course contradicts $\sum_i \ep_i =\infty$. 

\

This failure in our apprehension of (\ref{eq:volterra}) motivated the study of a particular case of Volterra equations (see our companion paper \cite{DT}) for which some modifications of the $\der$-formalism enable to get rid (in some way) of the past-dependent term in (\ref{decomposition-generale}).


\section{Appendix}
We gather in this section some regularity results for the functions and controlled processes we handle in throughout the paper.

\begin{proof}[Proof of Lemma \ref{estimation-sigma}]

To obtain (\ref{inegalite-sigma-1}), pick $u <v$ and observe that
\bean
\norm{ [\si^t-\si^s](\cy_v)-[\si^t-\si^s](\cy_u)} &\leq & \norm{ D(\si^t-\si^s)}_\infty \, \norm{\cy_v-\cy_u }\\
&\leq & \norm{D^2\si}_\infty \, \lln t-s \rrn \lp \lln v-u \rrn +\cn[y; \cac_1^\ga] \, \lln v-u\rrn^\ga \rp,
\eean
which gives the result.

\smallskip

In order to establish (\ref{inegalite-sigma-2}), let us introduce the operator $R$ defined for any $\varphi \in \cac^{1,b}( \R^{d+1})$, $\xi,\xi' \in \R^{d+1}$, by 
$$R\varphi (\xi,\xi')=\int_0^1 D\varphi(\al\xi+(1-\al)\xi') \, d\al.$$
Then of course $\norm{R\varphi}_\infty \leq \norm{D\varphi}_\infty$ and $\norm{ R\varphi(\xi_1,\xi_1')-R\varphi(\xi_2,\xi_2')}\leq \norm{D^2\varphi}_\infty (\norm{\xi_1-\xi_2}+\norm{\xi_1'-\xi_2'})$. With this notation, if $0 <u<v<T$, 
\bean
\lefteqn{\norm{[\si^t(\cy_v)-\si^t(\cyti_v)]-[\si^t(\cy_u)-\si^t(\cyti_u)]}}\\
&=& \norm{R\si^t(\cy_v,\cyti_v)(\cy_v-\cyti_v)-R\si^t(\cy_u,\cyti_u)(\cy_u-\cyti_u)}\\
& \leq & \norm{ R\si^t(\cy_v,\cyti_v)([\cy_v-\cyti_v]-[\cy_u-\cyti_u])}+\norm{ [R\si^t(\cy_v,\cyti_v)-R\si^t(\cy_u,\cyti_u)](\cy_u-\cyti_u)}\\
&\leq & \norm{D\si^t }_\infty\norm{[y_v-\yti_v]-[y_u-\yti_u]}\\
& & \hspace{3cm} +\norm{ D^2 \si^t }_\infty (2\lln v-u\rrn +\norm{y_v-y_u }+\norm{\yti_v-\yti_u }) \norm{y_u-\yti_u }\\
&\leq &  \cn[y-\yti; \cac_1^\ga] \lln v-u \rrn^\ga \lcl \norm{D\si}_\infty +\norm{D^2\si}_\infty  (2T^{1-\ga}+\cn[y;\cac_1^\ga]+\cn[\yti;\cac_1^\ga]) T^\ga \rcl,
\eean
where, in the last inegality, we have used the fact that $y_u-\yti_u=[y_u-\yti_u]-[y_0-\yti_0]$. Inequality (\ref{inegalite-sigma-2}) follows easily. Notice that those are the same arguments as in the proof of \cite[Lemma 5]{Gu}.

\smallskip

To prove (\ref{inegalite-sigma-3}), let us introduce the operator $L$ defined for any $\varphi \in \cac^{2,\textbf{\textit{b}},\ka}(\R^{d+2})$ and any $s,t \in \R$, $\xi,\xi' \in \R^{d+1}$, as
$$L\varphi(s,t,\xi,\xi')=\int_0^1 \int_0^1 D^2\varphi(s+\mu (t-s),\xi+\la(\xi'-\xi)) \, d\mu \, d\la.$$
Thus, $L\varphi(s,t,\xi,\xi')$ is a bilinear mapping on $\R \times (\R \times \R^d)$ such that $\norm{L\varphi}_\infty \leq \norm{D^2\varphi}_\infty$ and $\norm{L\varphi(s,t,\xi_1,\xi_1')-L\varphi(s,t,\xi_2,\xi_2')} \leq \norm{D^2\varphi}_\ka \lp \norm{\xi_1-\xi_2}^\ka+\norm{\xi_1'-\xi_2'}^\ka \rp$.\\
With this notation, it is readily checked that
$$\si(t,\xi)-\si(s,\xi)-\si(t,\xi')+\si(s,\xi')=L\si(s,t,\xi,\xi')((t-s,0),(0,\xi-\xi'))$$
for any $s,t \in [0,T]$, $\xi,\xi' \in [0,T] \times \R^d$, so that
\begin{eqnarray}
\lefteqn{\norm{ [\si^t-\si^s](\cy_u)-[\si^t-\si^s](\cyti_u)-[\si^t-\si^s](\cy_v)+[\si^t-\si^s](\cyti_v)}}  \label{increment-double}\\
&=&\lVert L\si(s,t,\cy_u,\cyti_u)((t-s,0),(0,\cy_u-\cyti_u)) \nonumber\\
& & \hspace{6cm}-L\si(s,t,\cy_v,\cyti_v)((t-s,0),(0,\cy_v-\cyti_v))\rVert  \nonumber\\
&\leq & \lVert L\si(s,t,\cy_u,\cyti_u)((t-s,0),(0,[\cy_u-\cyti_u]-[\cy_v-\cyti_v]))\rVert  \nonumber\\
& & \hspace{3cm} +\lVert [L\si(s,t,\cy_u,\cyti_u)-L\si(s,t,\cy_v,\cyti_v)]((t-s,0),(0,\cy_v-\cyti_v))\rVert    \nonumber\\
&\leq & \norm{D^2\si}_\infty \lln t-s \rrn \norm{[y_u-\yti_u]-[y_v-\yti_v]}  \nonumber\\
& &\hspace{3cm} +\norm{D^2\si}_\ka \lp 2\lln u-v \rrn^\ka+\norm{y_u-y_v}^\ka+\norm{\yti_u-\yti_v }^\ka \rp \lln t-s \rrn \norm{y_v-\yti_v}  \nonumber\\
&\leq & c_\si \lln t-s \rrn \big\{ \cn[y-\yti;\cac_1^\ka] \lln u-v \rrn^\ga   \nonumber\\
& &\hspace{3cm} +\lp 2\lln u-v\rrn^\ka+\lln u-v \rrn^{\ka\ga} \lcl\cn[y; \cac_1^\ga]^\ka+\cn[\yti; \cac_1^\ga]^\ka \rcl \rp \cn[y-\yti; \cac_1^\ga] \, T^\ga \big\}, \nonumber
\end{eqnarray}
which leads to the result.

\end{proof}

\begin{proof}[Proof of Proposition \ref{stabilite-sigma}]
This is a matter of elementary differential calculus. For the sake of conciseness, denote $\si=\si_i$ and $\varphi_{uv}(r)=\cy_u+r(\cy_v-\cy_u)$. Then
\begin{eqnarray}\label{decompo-sigma-de-y}
\lefteqn{(\der(\si^t(\cy))_{uv} \ = \  \si^t(\cy_v)-\si^t(\cy_u)} \nonumber\\
&=& \int_0^1 dr \, D_2\si(t,\varphi_{uv}(r))(v-u)+\int_0^1 dr \, D_3\si(t,\varphi_{uv}(r))(\der y)_{uv} \nonumber\\
&=& D_3\si(t,\cy_u)(\der y)_{uv}+\int_0^1 dr \, \lc D_3\si(t,\varphi_{uv}(r))-D_3\si(t,\cy_u) \rc(\der y)_{uv} \nonumber\\
& & \hspace{7cm} + \int_0^1 dr \, D_2\si(t,\varphi_{uv}(r))(v-u) \nonumber\\
& :=& (D_3\si(t,\cy_u) \circ y'_u)(\der x)_{uv}+r_{uv},
\end{eqnarray}
where $r$ has to be interpreted as a remainder, whose exact expression is given by:
\begin{multline*}
r_{uv}=D_3\si(t,\cy_u) r_{uv}^y 
+\int_0^1 dr \, \lc D_3\si(t,\varphi_{uv}(r))-D_3\si(t,\cy_u) \rc(\der y)_{uv}\\
+ \int_0^1 dr \, D_2\si(t,\varphi_{uv}(r))(v-u).
\end{multline*}
We will now bound the two terms in expression (\ref{decompo-sigma-de-y}).

\smallskip

First, $\norm{D_3\si(t,\cy) \circ y' }_\infty \leq \norm{D_3\si}_\infty \cn[y'; \cac_1^0]  \leq c_\si \cn[y;\cq^\ga]$, and if $0\leq u <v \leq T$, 
\bean
\lefteqn{\norm{D_3\si(t,\cy_v)\circ y'_v-D_3\si(t,\cy_u)\circ y'_u }}\\
&\leq &  \norm{ \lc D_3\si(t,\cy_v)-D_3\si(t,\cy_u)\rc \circ y'_v }+\norm{D_3\si(t,\cy_u)\circ \lc y'_v-y'_u\rc }\\
& \leq & \norm{D^2\si}_\infty \norm{\cy_v-\cy_u } \cn[y';\cac_1^0]+\norm{D_3\si}_\infty \cn[y';\cac_1^\ga] \lln v-u\rrn^\ga\\
&\leq & \norm{D^2\si}_\infty (\lln v-u \rrn+\cn[y;\cac_1^\ga] \lln v-u\rrn^\ga) \cn[y';\cac_1^0]+\norm{D_3\si}_\infty \cn[y';\cac_1^\ga] \lln v-u\rrn^\ga\\
&\leq & c_\si \lln v-u\rrn^\ga \lcl 1+\cn[y;\cq^\ga]^2 \rcl,
\eean
hence $D_3\si(t,\cy)\circ y' \in \cac_1^\ga$ and $\cn[D_3\si(t,\cy)\circ y';\cac_1^\ga]\leq c_\si \lcl 1+\cn[y;\cq^\ga]^2 \rcl$.

\smallskip

As for $r$, if $0\leq u < v \leq T$,
\bean
\norm{r_{uv}}& \leq & \norm{D_3\si}_\infty \cn[r^y;\cac_2^{2\ga}] \lln v-u\rrn^{2\ga}+\norm{D^2\si}_\infty \norm{\cy_v-\cy_u } \cn[y;\cac_1^\ga] \lln v-u\rrn^\ga\\
& & \hspace{9cm}+\norm{D_2\si}_\infty \lln v-u\rrn\\
& \leq & c_\si \lln v-u\rrn^{2\ga} \lcl 1+\cn[y;\cq^\ga]^2 \rcl,
\eean
so that $r \in \cac_2^{2\ga}$ and $\cn[r; \cac_2^{2\ga}] \leq c_\si \lcl 1+\cn[y;\cq^\ga]^2 \rcl$. 

\smallskip

To get (\ref{eq:stabilite-sigma}), it only remains to notice that $\cn[\si^t(\cy);\cac_1^\ga]\leq c_\si \lcl 1+\cn[y;\cq^\ga] \rcl$.

\end{proof}

\begin{proof}[Proof of Lemma \ref{estimation-sigma-rough}]
According to the proof of Proposition \ref{stabilite-sigma}, if $D_1\si^t:=D_2\si(t,.,.)$ and $D_2\si^t:=D_3\si(t,.,.)$, one has $[\si_i^t-\si_i^s](\cy)'_u=D_2(\si_i^t-\si_i^s)(\cy_u) \circ y'_u$ and 
\begin{multline*}
r_{uv}^{[\si_i^t-\si_i^s](\cy)}=D_2[\si_i^t-\si_i^s](\cy_u)(r^y_{uv})\\+\int_0^1 dr \, [D_2(\si^t-\si^s)(\cy_u+r(\cy_v-\cy_u))-D_2(\si^t-\si^s)(\cy_u)] (\der y)_{uv}
+\int_0^1 dr \, D_1(\si^t-\si^s)(\cy_u)(v-u).
\end{multline*}
Recall that in order to bound $(\si_i^t-\si_i^s)(\cy_u)$ in $\cq^\ga$, the main steps consist in estimating $\cn[(\si_i^t-\si_i^s)(\cy_u)';\cac_1^\ga]$ and $\cn[r;\cac_2^{2\ga}]$. However,
\bean
\lefteqn{\norm{[\si_i^t-\si_i^s](\cy)'_v-[\si_i^t-\si_i^s](\cy)'_u]}}\\
&\leq & \norm{ [D_2(\si_i^t-\si_i^s)(\cy_v)-D_2(\si_i^t-\si_i^s)(\cy_u)] \circ y'_v }+\norm{D_2(\si_i^t-\si_i^s)(\cy_u) \circ [y'_v-y'_u]}\\
&\leq & \norm{D^2(\si_i^t-\si_i^s)}_\infty (\lln v-u \rrn+\cn[y;\cac_1^\ga] \lln v-u \rrn^\ga )\cn[y';\cac_1^0]\\
& & \hspace{7cm} +\norm{D_2(\si_i^t-\si_i^s)}_\infty \cn[y';\cac_1^\ga] \lln v-u\rrn^\ga\\
&\leq & \norm{D^3 \si_i} \lln t-s \rrn (\lln v-u\rrn+\lln v-u \rrn^\ga \cn[y;\cac_1^\ga]) \cn[y';\cac_1^0]\\
& & \hspace{7cm} +\norm{D^2\si_i}_\infty \lln t-s \rrn \cn[y';\cac_1^\ga] \lln v-u\rrn^\ga\\
&\leq & c_\si \lln t-s \rrn \lln v-u \rrn^\ga \lcl 1+\cn[y;\cq^\ga]^2 \rcl,
\eean
and 
\bean
\norm{r_{uv}^{[\si_i^t-\si_i^s](\cy)} } &\leq & \norm{D_1(\si_i^t-\si_i^s)}_\infty \lln v-u \rrn +\norm{D_2(\si_i^t-\si_i^s)}_\infty \lln v-u \rrn^{2\ga} \cn[r^y;\cac_2^{2\ga}]\\
& & \hspace{1.5cm} +\norm{D^2(\si_i^t-\si_i^s)}_\infty (\lln v-u\rrn+\cn[y;\cac_1^\ga] \lln v-u \rrn^\ga) \cn[y;\cac_1^\ga] \lln v-u\rrn^\ga \\
&\leq & c_\si \lln t-s \rrn \lln v-u\rrn^{2\ga} \lcl 1+\cn[y;\cq^\ga]^2 \rcl.
\eean
The upper bound (\ref{estimation-terme-retard-1}) is now easily obtained.

\smallskip

Inequality (\ref{estimation-terme-retard-2}) is in fact a direct consequence of \cite[Proposition 4]{Gu}. Indeed, if $y\in \cq^\ga([0,T];\R^{1,d})$, then of course $\cy \in \cq^\ga([0,T];\R^{1,d+1})$ with decomposition 
$$
(\der \cy)_{st}=(0,y'_s)(\der x)_{st}+(t-s,r_{st}^y).
$$
Then, according to the aforementioned proposition,
$$\cn[\si^t(\cy)-\si^t(\cyti);\cq^\ga]\leq c_{\si,x} \lcl 1+\cn[\cy;\cq^\ga]^2+\cn[\cyti;\cq^\ga]^2\rcl \cn[\cy-\cyti;\cq^\ga].$$
It is then readily checked that $\cn[\cy;\cq^\ga] \leq c \lcl 1+\cn[y;\cq^\ga]\rcl$ and $\cn[\cy-\cyti;\cq^\ga]=\cn[y-\yti;\cq^\ga]$.

\smallskip

Let us now prove Inequality (\ref{estimation-terme-retard-3}). To this end, denote $\ze^{st}:=D_2(\si_i^t-\si_i^s)$ and use the fact that
$[(\si_i^t-\si_i^s)(\cy)]'=\zeta^{st}(\cy)\circ \cy'.$ 
This yields the decomposition $[(\si_i^t-\si_i^s)(\cy)]'-[(\si_i^t-\si_i^s)(\cyti)]')_{uv}=A_{uv}^{st}+B_{uv}^{st}+C_{uv}^{st}+D_{uv}^{st}$, with
$$A_{uv}^{st}=\der(\ze^{st}(\cy))_{uv} \circ [y'_v-\yti'_v] \quad , \quad B_{uv}^{st}=\ze^{st}(\cy_u) \circ \der([y'-\yti'])_{uv},$$
$$C_{uv}^{st}=[\ze^{st}(\cy_v)-\ze^{st}(\cyti_v)] \circ (\der \yti')_{uv} \quad , \quad D_{uv}^{st}=\der([\ze^{st}(\cy)-\ze^{st}(\cyti)])_{uv} \circ \yti'_u.$$
Owing to the regularity of $\si$, we are in position to apply Lemma \ref{estimation-sigma} with $D_3\si_i$, which gives
\bean
\cn[A^{st},\cac_2^{\ka\ga}] &\leq & \cn[D_2(\si_i^t-\si_i^s)(\cy);\cac_1^\ga] T^{\ga(1-\ka)} \cn[y-\yti;\cq^\ga]\\
&\leq & c_\si \lln t-s \rrn \lcl 1+\cn[y;\cq^\ga] \rcl \cn[y-\yti;\cq^\ga],
\eean
and
\bean
\cn[D^{st};\cac_2^{\ka\ga}] &\leq & \cn[D_2(\si_i^t-\si_i^s)(\cy)-D_2(\si_i^t-\si_i^s)(\cyti);\cac_1^{\ka\ga}] \cn[\yti;\cq^\ga]\\
&\leq & c_\si \lln t-s \rrn \lcl 1+\cn[y;\cq^\ga]^\ka+\cn[\yti;\cq^\ga]^\ka \rcl \cn[y-\yti;\cq^\ga] \cn[\yti;\cq^\ga].
\eean

Besides, it is easy to see that $\cn[B^{st};\cac_1^{\ka\ga}] \leq c_\si \lln t-s \rrn \cn[y-\yti;\cq^\ga]$, while $\cn[C^{st};\cac_1^{\ka\ga}] \leq c_\si \lln t-s \rrn \cn[\yti;\cq^\ga] \cn[y-\yti;\cq^\ga]$, hence
\begin{multline}\label{estimation-sigma-derivee}
\cn[([\si_i^t-\si_i^s](\cy)-[\si_i^t-\si_i^s](\cyti))';\cac_1^{\ka\ga}]\\
\leq c_\si \lln t-s \rrn \lcl 1+\cn[y;\cq^\ga]^{1+\ka}+\cn[\yti;\cq^\ga]^{1+\ka} \rcl \cn[y-\yti;\cq^\ga]. 
\end{multline}

As for $r^{st}_{uv}:=r^{[\si_i^t-\si_i^s](\cy)}-r^{[\si_i^t-\si_i^s](\cyti)}$, we know from (\ref{decompo-sigma-de-y}) that, if $\vp_{uv}(r)=\cy_u+r(\cy_v-\cy_u)$, $\tilde{\vp}_{uv}(r):=\cyti_u+r(\cyti_v-\cyti_u)$ and $\si_i^{st}:=\si^t-\si^s$, then $r^{st}_{uv}=r^{st,1}_{uv}+r^{st,2}_{uv}+r^{st,3}_{uv}$, with
$$r^{st,1}_{uv}=\int_0^1 dr \, [D_1\si_i^{st}(\vp_{uv}(r))-D_1\si_i^{st}(\tilde{\vp}_{uv}(r))](v-u),$$
$$r^{st,2}_{uv}=D_2\si_i^{st}(\cy_u)(r^y_{uv})-D_2\si_i^{st}(\cyti_u)(r^{\yti}_{uv}),$$
$$r^{st,3}_{uv}=\int_0^1 dr \, \{ [D_2 \si_i^{st}(\vp_{uv}(r))-D_2\si_i^{st}(\cy_u)](\der y)_{uv}-[D_2 \si_i^{st}(\tilde\vp_{uv}(r))-D_2\si_i^{st}(\cyti_u)](\der \yti)_{uv} \}.$$
Obvious arguments allow to assert that $\cn[r^{st,1};\cac_2^{\ga+\ga\ka}] \leq c_\si \lln t-s \rrn \cn[y-\yti;\cq^\ga]$. To deal with $r^{st,2}$, write of course 
$$r^{st,2}_{uv}=[D_2\si_i^{st}(\cy_u)-D_2\si_i^{st}(\cyti_u)](r^y_{uv})+D_2\si_i^{st}(\cyti_u)([r^y_{uv}-r^{\yti}_{uv}]),$$ 
which leads to $\cn[r^{st,2};\cac_2^{\ga+\ga\ka}] \leq c_\si \lln t-s \rrn \lcl 1+\cn[y;\cq^\ga]\rcl \cn[y-\yti;\cq^\ga]$. Finally, decompose $r^{st,3}$ into $r^{st,3}=r^{st,3,1}+r^{st,3,2}$, with
$$r^{st,3,1}_{uv}=\int_0^1 dr \, [D_2\si_i^{st}(\vp_{uv}(r))-D_2\si_i^{st})(\cy_u)] \der(y-\yti)_{uv},$$
$$r^{st,3,2}_{uv}=\int_0^1 dr \, \lc D_2\si_i^{st}(\vp_{uv}(r))-D_2\si_i^{st}(\cy_u)-D_2\si_i^{st}(\tilde{\vp}_{uv}(r))+D_2\si_i^{st}(\cyti_u)\rc (\der \yti)_{uv}.$$
Clearly, $\cn[r^{st,3,1};\cac_2^{\ga+\ga\ka}] \leq c_\si \lln t-s \rrn \lcl 1+\cn[y;\cq^\ga]\rcl \cn[y-\yti;\cq^\ga]$. To conclude with, observe that the double increment appearing into brackets in $r^{st,3,2}_{uv}$ can be dealt with just as (\ref{increment-double}) (replace $[\si^t-\si^s]$ with $D_2[\si_i^t-\si_i^s]$ and $\cy_v$ with $\vp_{uv}(r)$). This gives 
$$\cn[r^{st,3,2};\cac_2^{\ga+\ga\ka}] \leq c_\si \lln t-s \rrn \lcl 1+\cn[y;\cq^\ga]^\ka+\cn[\yti;\cq^\ga]^\ka \rcl \cn[y-\yti;\cq^\ga] \cn[\yti;\cq^\ga].$$
We have thus shown that
$$\cn[r^{st};\cac_2^{\ga+\ga\ka}] \leq c_\si \lln t-s \rrn \lcl 1+\cn[y;\cq^\ga]^{1+\ka}+\cn[\yti;\cq^\ga]^{1+\ka} \rcl \cn[y-\yti;\cq^\ga],$$
which, together with (\ref{estimation-sigma-derivee}), entails (\ref{estimation-terme-retard-3}).

\end{proof}

\end{document}